\def\enumref#1{(\ref{#1})}
\newtheorem{thm}{Theorem}[section]
\newtheorem{prop}[thm]{Proposition}
\newtheorem{lem}[thm]{Lemma}
\newtheorem{cor}[thm]{Corollary}
\newtheorem{question}[thm]{Question}
\theoremstyle{remark}
\newtheorem{rem}[thm]{Remark}
\newtheorem{exa}[thm]{Example}
\newtheorem{nonexa}[thm]{(Non-)Example}
\theoremstyle{definition}
\newtheorem{defi}[thm]{Definition}
\newcommand{\Z}{\mathbb{Z}}
\newcommand{\Q}{\mathbb{Q}}
\newcommand{\R}{\mathbb{R}}
\newcommand{\N}{\mathbb{N}}
\newcommand{\C}{\mathbb{C}}
\newcommand{\tensor}{\otimes}
\newcommand{\CW}{{CW}}
\newcommand{\fclr}[1]{[#1]_\R}
\newcommand{\fclq}[1]{[#1]_\Q}
\newcommand{\args}{\,\cdot\,}
\DeclareMathOperator{\Mfd}{Mfd}
\DeclareMathOperator{\id}{id}
\DeclareMathOperator{\BO}{BO}
\DeclareMathOperator{\GL}{GL}
\DeclareMathOperator{\BSO}{BSO}
\DeclareMathOperator{\SO}{SO}
\DeclareMathOperator{\rank}{rank}
\DeclareMathOperator{\Hom}{Hom}
\DeclareMathOperator{\sign}{sign}
\DeclareMathOperator{\Map}{Map}
\DeclareMathOperator{\vol}{vol}
\newcommand\norm{\bBigg@{0.6}}
\newcommand{\fsn}[2][norm]{\csname #1l\endcsname\bracevert\!#2\!%
                           \csname #1r\endcsname\bracevert}
\newcommand{\realmfd}[2][norm]{{\mathcal M}\csname #1l\endcsname(#2%
                                           \csname #1r\endcsname)}
\newcommand{\csum}{\mathbin{\#}}
\def\to{\longrightarrow}
\def\epsilon{\varepsilon}
\author{Diarmuid Crowley}
\author{Clara L\"oh}
\title[Functorial semi-norms and inflexible manifolds]{Functorial semi-norms on singular
  homology\\
  and (in)flexible manifolds}
\date{\today.\ \copyright{\ D.~Crowley and C.~L\"oh 2011}\\
      MSC 2010 classification: primary:57N65, secondary: 55N10, 55N35, 55P62}
\begin{document}

\begin{abstract}
  A functorial semi-norm on singular homology is a collection of
  semi-norms on the singular homology groups of spaces such that continuous maps
  between spaces induce norm-decreasing maps in homology. Functorial
  semi-norms can be used to give constraints on the possible mapping
  degrees of maps between oriented manifolds.

  In this paper, we use information about the degrees of maps between
  manifolds to construct new functorial semi-norms with interesting
  properties. In particular, we answer a question of Gromov by
  providing a functorial semi-norm that takes finite positive values
  on homology classes of certain \emph{simply connected} spaces. Our
  construction relies on the existence of simply connected manifolds
  that are \emph{inflexible} in the sense that all their self-maps
  have degree~$-1$,~$0$, or~$1$. The existence of such manifolds was
  first established by Arkowitz and Lupton; we extend their methods to
  produce a wide variety of such manifolds.
\end{abstract}

\maketitle

\section{Introduction}

Enriching algebraic invariants with metric data is a common theme in
many branches of mathematics. Gromov introduced the concept of
functorial semi-norms on singular
homology~\cite[Section~5.34]{gromov}, which are an example of this
paradigm in topology.

A \emph{functorial semi-norm} on singular homology consists of the
addition of a semi-normed structure to the singular homology groups
with $\R$-coefficients in such a way that continuous maps induce
linear maps on homology of norm at most~$1$
(Definition~\ref{def:funcsn}).  An interesting aspect is that suitable
functorial semi-norms give a systematic way to deduce degree theorems
for maps between manifolds (Remark~\ref{rem:degthms}). Conversely, in
the present paper, we translate knowledge about degrees of maps
between manifolds to construct new functorial semi-norms.

A central example of a functorial semi-norm on singular homology,
studied by Gromov~\cite{vbc}, is the $\smash{\ell^1}$-semi-norm given
by taking the infimum of the $\smash{\ell^1}$-norms of all cycles
representing a given homology class
(Example~\ref{exa:l1seminorm}). The $\smash{\ell^1}$-semi-norm gives
rise to lower bounds for the minimal volume and hence leads to
interesting applications in Riemannian geometry~\cite{vbc}. On the
other hand, using bounded cohomology, Gromov showed that the
\mbox{$\ell^1$-se}\-mi-norm vanishes on classes of non-zero degree of
simply connected spaces~\cite{vbc}, and later raised the 
question whether every functorial semi-norm on singular homology in
non-zero degree is trivial on all simply connected
spaces~~\cite[Remark~(b) in~5.35]{gromov}. More precisely, we
formulate this problem as follows:

\begin{question}
  \label{q:gromov}
  Let $d \in \N_{>0}$.
  \begin{enumerate}
    \item\label{enum:infinite} Does every (possibly infinite) functorial semi-norm on
      singular homology in degree~$d$ take only the values~$0$
      and~$\infty$ on homology classes of simply connected spaces?
    \item\label{enum:finite} Does every finite functorial semi-norm on singular homology
      in degree~$d$ vanish on homology classes of simply connected
      spaces? 
  \end{enumerate}
\end{question}

In this paper, we answer the first part of this question in the negative
(Corollary~\ref{cor:inflexiblesn}):

\begin{thm}\label{thm:inflexiblesnintro}
  There are functorial semi-norms on singular homology that are
  positive and finite on certain homology classes of simply connected
  spaces.
\end{thm}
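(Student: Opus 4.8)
The plan is to produce the desired semi-norm directly from a single well-chosen generator, namely the $\R$-fundamental class $\fclr{M}$ of a closed oriented \emph{simply connected inflexible} manifold $M$ of dimension~$d$. The substantive input---the existence of such an~$M$---is the geometric heart of the paper; granting it, the construction of the semi-norm and the verification of its properties are formal.

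Concretely, for a space $X$ and a class $\beta \in H_d(X;\R)$, I would define
\[
  \|\beta\|_M := \sup\bigl\{\, |c| : c \in \R,\ \exists\, g \in \Map(X,M) \text{ with } H_d(g;\R)(\beta) = c \cdot \fclr{M} \,\bigr\},
\]
using that $H_d(M;\R)$ is the one-dimensional space generated by $\fclr{M}$, so that the coefficient~$c$ is well defined for each~$g$. First I would check the semi-norm axioms: for $t \in \R$ one has $H_d(g)(t\beta) = tc\,\fclr{M}$, whence $\|t\beta\|_M = |t|\,\|\beta\|_M$; and since a single~$g$ sends $\beta_1 + \beta_2$ to $(c_1+c_2)\,\fclr{M}$, the triangle inequality follows from $|c_1+c_2| \le |c_1| + |c_2|$ together with monotonicity of the supremum. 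Functoriality is the key formal point: given $f \colon X \to Y$ and $\beta \in H_d(X;\R)$, each $h \in \Map(Y,M)$ produces a composite $h \circ f \in \Map(X,M)$ that realizes on~$\beta$ the same coefficient that $h$ realizes on $H_d(f)(\beta)$; hence the family of coefficients defining $\|H_d(f)(\beta)\|_M$ is contained in the family defining $\|\beta\|_M$, and therefore $\|H_d(f)(\beta)\|_M \le \|\beta\|_M$, as required.

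It then remains to evaluate the semi-norm on $\fclr{M}$ itself. Here $\|\fclr{M}\|_M = \sup_{g \in \Map(M,M)} |\deg g|$, where $g$ ranges over all self-maps of~$M$. The identity contributes $|\deg \id_M| = 1$, which gives positivity, while inflexibility of~$M$ forces $|\deg g| \le 1$ for every self-map, which gives finiteness; thus $\|\fclr{M}\|_M = 1$. Since $M$ is simply connected, this exhibits a functorial semi-norm that is positive and finite on a homology class of a simply connected space, proving the statement.

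The only genuine difficulty lies in securing the geometric input: one needs a closed oriented simply connected manifold all of whose self-maps have degree $-1$, $0$, or~$1$. I expect this to be established by rational homotopy theory, constructing~$M$ (or a suitable minimal Sullivan model) so that the set of attainable self-map degrees is tightly constrained; this is precisely the Arkowitz--Lupton phenomenon that the paper extends, and it is where all the real work sits. By contrast, the passage from such an~$M$ to the semi-norm above is purely formal.
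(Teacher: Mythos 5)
Your proposal is correct, and the formal part is a genuinely different construction from the paper's. The paper proceeds in two steps: it first defines the domination semi-norm $v_M(N) := \sup\{|\deg f| \mid f \colon N \to M\}$ only on fundamental classes of oriented closed connected $d$-manifolds (Proposition~\ref{prop:vmfn}), and then extends it to all spaces by the general machinery of Section~\ref{sec:genfuncsn}: the associated semi-norm is an \emph{infimum} over all ways of writing $\alpha = \sum_j a_j \cdot H_d(f_j;\R)\fclr{M_j}$ with manifolds mapping \emph{into} $X$ (Definition~\ref{def:assocfunsn}), and the equality $\fsn{\fclr M}_{\!\!M} = v_M(M) = 1$ requires the lower-bound argument of Theorem~\ref{thm:genfunsn}, which in turn rests on Thom's representability theorem for well-definedness and finiteness questions. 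You instead define the semi-norm on every space in one stroke as a \emph{supremum} of coefficients realized by maps \emph{out of} $X$ into the fixed manifold $M$ --- a ``co-representation'' dual of the paper's construction. Your verification is sound: the coefficient sets pull back under composition, which gives functoriality; the triangle inequality and homogeneity hold coefficientwise; and the evaluation $\|\fclr M\|_M = \sup\{|\deg g| \mid g \colon M \to M\} = 1$ is immediate from inflexibility, with no representability input at all. Note that your semi-norm restricts to $v_M$ on fundamental classes of manifolds, so by the maximality statement (Theorem~\ref{thm:genfunsn}, part~(3)) it is dominated by the paper's semi-norm, though the two need not agree on general classes. What your route buys is economy: it bypasses Sections~\ref{sec:repmfd} and~\ref{sec:genfuncsn} entirely. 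What the paper's route buys is reusable generality: Theorem~\ref{thm:genfunsn} extends \emph{any} functorial $\Mfd_d$-semi-norm, yields finiteness of the extension when $v$ is finite (the ingredient behind Proposition~\ref{prop:equivstronglyinflexible} on strongly inflexible manifolds), and produces the maximal such extension. Both arguments defer identically to the real geometric content --- the existence of oriented closed simply connected inflexible manifolds (Theorem~\ref{thm:realisationbymfds}) --- which you correctly identify as the heart of the matter.
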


More concretely, we give examples of such functorial semi-norms in all
degrees in the set~$\{64\} \cup \{ d \cdot k \mid k \in \N_{>0},\ d
\in \{108,208,228\}\}$ (Corollary~\ref{cor:manyinflexible}). 

On the other hand, we give a positive answer to
Question~\ref{q:gromov}\enumref{enum:finite} in low dimensions
(Section~\ref{subsec:positiveanswer}):

\begin{thm}\label{thm:fsnlowdimintro}
  All finite functorial semi-norms on singular homology in the
  degrees~$1, \dots, 6$ vanish on all homology clases of simply
  connected spaces.
\end{thm}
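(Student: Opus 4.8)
The plan is to reduce the statement to a single geometric input — that every simply connected closed oriented manifold of dimension at most $6$ is \emph{flexible} — and then to kill the semi-norm by a scaling argument. Fix a finite functorial semi-norm $\fsn{\args}$ in degree $d$ with $1\le d\le 6$, a simply connected space $X$, and a class $\alpha\in H_d(X;\R)$. Since $H_d(X;\R)=H_d(X;\Q)\otimes_\Q\R$ and $\fsn{\args}$ is homogeneous and subadditive, it suffices to treat rational classes, so I may assume $\alpha\in H_d(X;\Q)$. First I would represent $\alpha$ by manifolds: the rational oriented bordism map $\Omega^{SO}_d(X)\otimes\Q\to H_d(X;\Q)$ is surjective (the Atiyah--Hirzebruch spectral sequence collapses rationally, and the edge homomorphism is exactly $[N\xrightarrow{f}X]\mapsto f_*[N]$), so I can write $\alpha=\sum_i c_i\,f_{i*}[N_i]$ with $c_i\in\Q$ and $f_i\colon N_i\to X$ maps from closed oriented connected $d$-manifolds.

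Next I would arrange the $N_i$ to be simply connected without changing $f_{i*}[N_i]$. Because $X$ is simply connected, every embedded circle in $N_i$ is sent to a null-homotopic loop, so surgery on a finite system of circles generating $\pi_1(N_i)$ can be carried out compatibly with $f_i$: the map extends over the trace $W_i$ of the surgery, and since $\partial W_i=N_i\sqcup(-N_i')$ one obtains $f_{i*}[N_i]=f'_{i*}[N_i']$ for the resulting simply connected $N_i'$ and the restricted map $f_i'$. (For $d=1,2$ this is handled directly, $S^2$ being the only simply connected closed oriented surface.) By subadditivity it then suffices to prove $\fsn{f'_{i*}[N_i']}=0$ for each $i$. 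This is where flexibility enters: if $M:=N_i'$ admits a self-map $\varphi\colon M\to M$ of degree $k$ with $|k|\ge 2$, then $(f_i'\circ\varphi^{n})_*[M]=k^{n}f'_{i*}[M]$, so functoriality and homogeneity give $|k|^{n}\,\fsn{f'_{i*}[M]}=\fsn{(f_i'\circ\varphi^{n})_*[M]}\le\fsn{[M]}<\infty$ for all $n\in\N$ — the finiteness being precisely the standing hypothesis — and letting $n\to\infty$ forces $\fsn{f'_{i*}[M]}=0$. Thus the whole theorem reduces to the claim that every simply connected closed oriented manifold of dimension at most $6$ is flexible, i.e.\ admits a self-map of degree of absolute value at least~$2$.

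The heart of the proof — and the step I expect to be the main obstacle — is establishing this flexibility, for which I would pass to rational homotopy theory. In dimensions $\le 3$ the manifold is a sphere (using the Poincar\'e conjecture in dimension~$3$), and $S^m$ is manifestly flexible. In general I would invoke that every simply connected closed manifold of dimension at most~$6$ is \emph{formal}, and that a formal simply connected space has \emph{positive weights}: its bigraded minimal model $(\Lambda V,d)$ with $V=\bigoplus_{q\ge 0}V_q$ and $d\colon V_q\to(\Lambda V)_{q-1}$ carries the weight grading $w(v)=|v|+q$ for $v\in V_q$, which is strictly positive (as $|v|\ge 2$) and satisfies $w(dv)=w(v)$. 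Hence the assignments $v\mapsto s^{w(v)}$ define compatible self-maps $\varphi_s$ of the rationalization $M_\Q$ scaling the fundamental cohomology class, and thus the degree, by $s^{w}$ for a fixed positive integer~$w$.

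It remains to descend from $M_\Q$ back to $M$. Realizing the rational self-maps $\varphi_s$ by genuine self-maps of the finite complex $M$ up to a bounded integral multiple — using that $\pi_*(M)$ is finitely generated, so that only finitely many denominators must be cleared — yields self-maps of $M$ of degree tending to infinity as $s\to\infty$, whence $M$ is flexible and the reduction above applies. The two delicate points, which carry essentially all of the content, are the appeal to formality of \emph{all} simply connected manifolds up to dimension~$6$ and the passage from rational to integral self-maps without destroying the unboundedness of the degree; the reduction to fundamental classes and the scaling estimate are, by contrast, purely formal manipulations with the functorial semi-norm.
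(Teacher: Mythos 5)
Your overall route is the same as the paper's: represent a class of a simply connected space, up to a nonzero scalar, by maps from oriented closed \emph{simply connected} $d$-manifolds; observe that a finite functorial semi-norm must vanish on the image of the fundamental class of any \emph{flexible} manifold (iterate a self-map of degree $|k|\ge 2$ and use functoriality plus homogeneity); and deduce flexibility of all oriented closed simply connected manifolds of dimension at most~$6$ from formality (Neisendorfer--Miller). The paper packages the scaling step as an equivalence with strong inflexibility (Proposition~\ref{prop:equivstronglyinflexible}), obtains the simply connected representatives from Thom's theorem plus Kreck's surgery (Corollary~\ref{cor:repspaces}, valid for $d\ge 4$), handles degrees $1,2,3$ separately via the Hurewicz theorem, and cites Shiga for ``formal implies flexible'' (Proposition~\ref{prop:formalflexible}). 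So your proposal is structurally the paper's proof; the issues lie exactly in the two places where you replace the paper's citations by your own arguments.

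First, your surgery step fails in dimension~$3$: an embedded circle $\gamma$ in a $3$-manifold $N$ has codimension~$2$, so $\pi_1(N\setminus\gamma)$ acquires a meridian generator $\mu$, and surgery produces $\pi_1(N\setminus\gamma)/\langle\langle \lambda\mu^k\rangle\rangle$, which is in general \emph{not} $\pi_1(N)/\langle\langle\gamma\rangle\rangle$; for instance, surgery on the core circle of a lens space typically yields another non-simply connected lens space. (For $d\ge 4$ the circles have codimension $\ge 3$ and your argument is correct.) This is repairable: either invoke Lickorish--Wallace to convert $N$ to $S^3$ by surgery on \emph{some} framed link -- your trace argument only needs the attaching circles to be null-homotopic in $X$, which is automatic -- or, as the paper does, dispose of degrees $\le 3$ directly, since for simply connected $X$ the Hurewicz map $\pi_d(X)\to H_d(X;\Z)$ is onto for $d\le 3$ and self-maps of $S^d$ of large degree then kill the semi-norm. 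Second, the descent from $M_\Q$ to $M$: ``clearing denominators'' is not an available operation, because $[M,M]$ is not a group and one cannot multiply a homotopy class of maps by an integer. Realizing (a power of) a grading automorphism of $M_\Q$ by a genuine self-map of $M$ of non-trivial degree requires an obstruction-theoretic argument -- the obstructions are torsion and are killed after composing with further grading automorphisms -- and this is precisely Shiga's theorem, which the paper simply cites. Your positive-weights construction of the rational self-maps is correct, but as written the integral realization is asserted rather than proved. With these two repairs (or citations) your argument coincides with the paper's.
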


The key to proving Theorem~\ref{thm:inflexiblesnintro}
and~\ref{thm:fsnlowdimintro} is to gain an understanding of the class of simply
connected inflexible manifolds.

\begin{defi}[Inflexible manifolds]
  If $M$ and $N$ are oriented closed connected manifolds of the same
  dimension, then we write
  \[ \deg(N,M) := \{ \deg f \mid \text{$f \colon N \longrightarrow M$ continuous} \} 
  \]
  for the set of all possible mapping degrees for maps from~$N$
  to~$M$. An oriented closed connected manifold~$M$ is \emph{inflexible} if 
  $\deg(M,M) \subset \{-1,0,1\}. 
  $
\end{defi}

The proof of Theorem~\ref{thm:inflexiblesnintro} consists of two
main steps:
\begin{itemize}
  \item \emph{Generating functorial semi-norms via manifolds.}  Using
    the fact that singular homology classes can (up to a scalar
    multiple) be represented by fundamental classes of oriented closed
    connected manifolds (Section~\ref{sec:repmfd}), we show how
    functorial semi-norms on fundamental classes of manifolds of a
    given dimension can be extended to functorial semi-norms on
    singular homology (Theorem~\ref{thm:genfunsn}).
  \item \emph{Inflexible manifolds.}  With the help of simply
    connected inflexible manifolds, we construct a functorial
    semi-norm on fundamental classes of manifolds that is positive and
    finite on the given simply connected inflexible manifold
    (Corollary~\ref{cor:inflexiblesn}). 
\end{itemize}

Simply connected inflexible manifolds can be constructed by means of
rational homotopy theory and surgery theory. The first examples of
such manifolds were given by Arkowitz and Lupton~\cite[Examples~5.1
and~5.2]{arkowitzlupton}; these examples have dimension~$208$ and
$228$ respectively. Using and extending the methods of Arkowitz and
Lupton, we give more examples of simply connected inflexible
manifolds: For instance, we have examples in dimension~$64$ (the
smallest dimension known before being~$208$) and~$108$. Starting from
these basic examples, we can construct many more simply connected
inflexible manifolds:
\begin{itemize}
  \item In general, it is not clear that connected sums and products
    of inflexible manifolds are inflexible; however, in certain cases
    this is true (Section~\ref{subsec:sum}
    and~\ref{subsec:product}). This provides in infinitely many
    dimensions infinitely many rational homotopy types of oriented
    closed simply connected inflexible manifolds
    (Corollary~\ref{cor:manyinflexible}). 
  \item In addition, using scaling of the fundamental class with
    respect to a rationalisation, we obtain infinitely many homotopy
    types of oriented closed simply connected inflexible manifolds within
    the same rational homotopy type (Proposition~\ref{prop:scaling}).
  \item Moreover, we can show that for manifolds being simply
    connected and inflexible is generic in the sense that in
    infinitely many dimensions every rational bordism class is
    represented by a simply connected inflexible manifold
    (Proposition~\ref{prop:inflexibleandrationalbordism}).
  \item Also, there are simply connected inflexible smooth manifolds
    satisfying certain tangential structure constraints such as being
    stably parallelisable or non-spinable
    (Section~\ref{subsec:moreinflexiblemanifolds}). 
\end{itemize}

However, from our construction it is not clear whether the examples
from Theorem~\ref{thm:inflexiblesnintro} are finite functorial
semi-norms; so Gromov's question remains open for finite functorial
semi-norms in degree~$7$ and higher.  More precisely, we prove the
following proposition (Proposition~\ref{prop:equivstronglyinflexible})
where an oriented closed connected $n$-manifold~$M$ is called
\emph{strongly inflexible} if for any oriented closed connected
$n$-manifold~$N$ the set~$\deg(N, M)$ is finite (Definition~\ref{def:stronglyinflexible}):

\begin{prop}
  For $d \in \N_{\geq 4}$ the following statements are equivalent: 
  \begin{enumerate}
   \item There is a \emph{finite} functorial semi-norm~$\fsn \args$
     on~$H_d(\args;\R)$ such that for some homology class~$\alpha \in
     H_d(X;\R)$ of some simply connected space~$X$ we have $\fsn
     \alpha \neq 0$.
   \item There exists an oriented closed simply connected $d$-manifold
     that is strongly inflexible.
  \end{enumerate}
\end{prop}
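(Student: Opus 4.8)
The plan is to derive both implications from the single algebraic fact that mapping degree is multiplicative under composition, combined with the norm-decreasing property of functorial semi-norms, the representability of homology classes by fundamental classes (Section~\ref{sec:repmfd}), and the extension result (Theorem~\ref{thm:genfunsn}). For the implication $(2) \Rightarrow (1)$, suppose $M$ is an oriented closed simply connected strongly inflexible $d$-manifold. On the fundamental classes of oriented closed connected $d$-manifolds I would define
\[ \fsn{\fclr N} := \sup \bigl\{\, |\deg f| \,\bigm|\, f \colon N \to M \text{ continuous} \,\bigr\}. \]
Strong inflexibility guarantees that $\deg(N,M)$ is finite, so this supremum is a finite real number, and evaluating on $\id_M$ shows $\fsn{\fclr M} \geq 1 > 0$. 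The functoriality condition $|\deg g| \cdot \fsn{\fclr N} \leq \fsn{\fclr{N'}}$ for a map $g \colon N' \to N$ follows by precomposition, since every $f \colon N \to M$ produces $f \circ g \colon N' \to M$ with $\deg(f \circ g) = \deg g \cdot \deg f$. Thus this defines a finite functorial semi-norm on fundamental classes of $d$-manifolds, and Theorem~\ref{thm:genfunsn} extends it to a finite functorial semi-norm on $H_d(\args;\R)$ that is still positive on the class $\fclr M \in H_d(M;\R)$ of the simply connected space $M$.

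For the converse $(1) \Rightarrow (2)$, let $\fsn\args$ be a finite functorial semi-norm with $\fsn\alpha \neq 0$ for some $\alpha \in H_d(X;\R)$ and some simply connected space $X$. Using representability (Section~\ref{sec:repmfd}) I would choose an oriented closed connected $d$-manifold $M$ together with a map $f \colon M \to X$ such that $f_* \fclr M = c \cdot \alpha$ for some $c \neq 0$; here the hypothesis $d \geq 4$, together with the simple-connectivity of $X$, lets me perform surgery on embedded circles to arrange that $M$ is simply connected without destroying the relation $f_* \fclr M = c \cdot \alpha$. The norm-decreasing property then gives $\fsn{\fclr M} \geq |c| \cdot \fsn\alpha > 0$. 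Finally, for any oriented closed connected $d$-manifold $N$ and any $g \colon N \to M$, functoriality yields
\[ |\deg g| \cdot \fsn{\fclr M} = \fsn{g_* \fclr N} \leq \fsn{\fclr N} < \infty, \]
so that $|\deg g| \leq \fsn{\fclr N} / \fsn{\fclr M}$. This bound is independent of $g$, whence the subset $\deg(N,M) \subset \Z$ is bounded and therefore finite; as $N$ was arbitrary, $M$ is strongly inflexible.

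The step I expect to be the main obstacle is the surgery argument in $(1) \Rightarrow (2)$: representing a scalar multiple of $\alpha$ by the fundamental class of a manifold is classical, but simultaneously arranging that the representing manifold is simply connected while retaining control of the map to $X$ (and hence of the relation $f_* \fclr M = c \cdot \alpha$) is precisely where the dimension hypothesis $d \geq 4$ and the simple-connectivity of $X$ enter. Everything else reduces to the elementary multiplicativity of the mapping degree under composition and to the norm-decreasing property, together with the extension machinery of Theorem~\ref{thm:genfunsn}.
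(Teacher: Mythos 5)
Your proof is correct and follows essentially the same route as the paper: for $(2)\Rightarrow(1)$ your semi-norm on fundamental classes is exactly the paper's domination $\Mfd_d$-semi-norm $v_M$ (Proposition~\ref{prop:vmfn}), extended to homology via Theorem~\ref{thm:genfunsn}, and for $(1)\Rightarrow(2)$ the paper likewise represents $\alpha$ by the fundamental class of a simply connected $d$-manifold and then bounds all degrees by finiteness and functoriality. The only difference is presentational: where you sketch the surgery on embedded circles by hand, the paper cites Corollary~\ref{cor:repspaces}(2), whose proof (via Theorem~\ref{thm:rephomology}(2) and Kreck's surgery) is precisely the simply-connected representability argument you describe, with $d \geq 4$ and $\pi_1(X) = 1$ entering in the same way.
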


No example of a simply connected strongly inflexible manifold seems to
be known to date: if such a manifold exists, it has dimension at least~$7$.

\begin{rem}
  Since this paper was posted Costoya and Viruel~\cite{costoyaviruel} and also
  Amann~\cite{amann} have further extended the list of examples and
  constructions of simply connected inflexible manifolds.  Amann~\cite{amann}
  has also given new examples of simply connected flexible manifolds.
  \end{rem}

\subsection*{Organisation of this paper}
We start by giving an introduction to functorial semi-norms
(Section~\ref{sec:fsn}). In Section~\ref{sec:repmfd} we recall Thom's
result on representation of homology classes by fundamental classes of
manifolds, which is the key ingredient for generating functorial
semi-norms via functorial semi-norms for manifolds
(Section~\ref{sec:genfuncsn}). We discuss the relationship between
functorial semi-norms on the singular chain complex and functorial
semi-norms on singular homology in Section~\ref{sec:chainfsn}. In
Section~\ref{sec:fsnsimplyconnected} we prove the
Theorems~\ref{thm:inflexiblesnintro} and~\ref{thm:fsnlowdimintro}. The
proof of Theorem~\ref{thm:inflexiblesnintro} is based on the
construction of simply connected inflexible manifolds; we carefully
review and extend the construction of Arkowitz and Lupton of simply
connected inflexible manifolds in
Section~\ref{sec:(in)flexiblemanifolds}, the technical aspects being
deferred to Section~\ref{app:I}. Finally, Section~\ref{app:II}
contains the study of inheritance properties of being inflexible and
evidence for the genericity of inflexibility in the class of simply
connected manifolds.

\subsection*{Acknowledgements}
We are indebted to Donald Stanley who drew our attention to the
examples of Arkowitz and Lupton.  Moreover, we would like to thank
Thomas Schick for interesting discussions.  We are grateful to
Jonathan Bowden for pointing out a mistake in a previous version.
Part of this work was supported by the HIM~trimester program
\emph{Rigidity} and by the SFB~878 \emph{Groups, Geometry and
  Actions}.

\section{Functorial semi-norms}\label{sec:fsn}

Functorial semi-norms assign a notion of ``size'' to singular homology
classes in a functorial way (Definition~\ref{def:funcsn}).

In this paper, we use the following convention: A \emph{semi-norm}
on an $\R$-vector space~$V$ is a function~$\fsn{\cdot} \colon V
\longrightarrow [0,\infty]$ satisfying the following properties:
\begin{itemize}
  \item We have $\fsn{0} = 0$.
  \item For all~$x \in V$ and all~$a \in \R\setminus\{0\}$, we have~$\fsn{a\cdot x} =
    |a| \cdot \fsn{x}$, where $|a| \cdot \infty := \infty$.
  \item For all~$x$,~$y \in V$ the triangle inequality~$\fsn{x+y} \leq
    \fsn{x} + \fsn{y}$ holds.
\end{itemize}
A semi-norm is called \emph{finite} if it does not take the
value~$\infty$.

\begin{defi}[Functorial semi-norms~{\cite[Section~5.34]{gromov}}]
  \label{def:funcsn}
  Let $d\in \N$. A \emph{functorial semi-norm (on singular homology)
    in degree~$d$} consists of a choice of a semi-norm~$\fsn{\cdot}$
  on~$H_d(X;\R)$ for any topological space~$X$ such that the
  following ``functoriality'' holds: for all continuous maps~$f \colon
  X \longrightarrow Y$ between topological spaces and all~$\alpha
  \in H_d(X;\R)$ we have
  \[ \fsn[big]{H_d(f;\R) (\alpha)}
     \leq \fsn\alpha .
  \]
  Such a functorial semi-norm is called \emph{finite}, if all the
  semi-norms involved are finite semi-norms.
\end{defi}



\begin{exa}[$\ell^1$-Semi-norm]\label{exa:l1seminorm}
  For a topological space~$X$ let $|\cdot|_1$ denote the $\ell^1$-norm
  on the singular chain complex~$C_*(X;\R)$ with respect to the
  (unordered) basis given by all singular simplices: if~$c=
  \sum_{j=1}^k a_j \cdot \sigma_j \in C_*(X;\R)$ is in reduced form,
  then we define
  \[ |c|_1 := \sum_{j=1}^k |a_j|. 
  \]
  This norm induces a finite semi-norm~$\|\cdot\|_1$, the
  so-called~\emph{$\ell^1$-semi-norm}, on singular homology as
  follows: for all~$\alpha \in H_*(X;\R)$ we set
  \[ \|\alpha\|_1
     := \inf \bigl\{ |c|_1
             \bigm| \text{$c \in C_*(X;\R)$ is a cycle representing~$\alpha$}
             \bigr\}.
  \]
  Looking at the definition of the homomorphisms induced by continuous
  maps in singular homology, it is immediate that $\|\cdot\|_1$ is a
  functorial semi-norm on singular homology.

  An interesting topological invariant derived from the
  $\ell^1$-semi-norm in singular homology is the simplicial volume,
  introduced by Gromov~\cite{gromov}: If $M$ is an oriented closed
  connected manifold, then
  \[ \| M \| := \bigl\| \fclr M \bigr\|_1 \in \R_{\geq 0}
  \]
  is the \emph{simplicial volume} of~$M$, where $\fclr M \in H_{\dim
    M}(M;\R)$ denotes the $\R$-fundamental class of~$M$. E.g., using
  self-maps of non-trivial degree, one sees that the simplicial volume
  of spheres (of non-zero dimension) is zero. On the other hand, for
  example, the simplicial volume of oriented closed connected
  hyperbolic manifolds is non-zero~\cite[Section~0.3, Theorem~6.2]{vbc,thurston}, leading to
  interesting applications in Riemannian geometry~\cite{vbc}.

  The $\ell^1$-semi-norm on singular homology can also be expressed in
  terms of bounded cohomology~\cite[p.~17, Proposition~F.2.2]{vbc,
    benedettipetronio}. Using bounded cohomology, Gromov discovered
  that the $\ell^1$-semi-norm of simply connected spaces is
  trivial~\cite[Section~3.1, Theorem~2.4]{vbc, ivanov}, and, more
  generally, that continuous maps that induce an isomorphism on the
  level of fundamental groups induce norm-preserving maps on the level
  of singular homology~\cite[Section~3.1, Theorem~4.3]{vbc, ivanov}.
\end{exa}

It is tempting to analogously consider $\ell^p$-norms with~$p > 1$;
however, it can be shown that the corresponding definition then leads
to the zero semi-norm on homology in positive degrees (this follows
from an argument similar to (Non-)Example~\ref{nonexa:lp})

\begin{exa}[Domination by products of surfaces]\label{exa:ssn}
  For~$d \in \N$, we define the functorial semi-norm~$\fsn{\cdot}_{S}$
  in degree~$2d$ as follows~\cite[Section~5.34]{gromov}: Let $X$ be a
  topological space, and let $\alpha \in H_{2d}(X;\R)$. Then
  \begin{align*} 
    \fsn \alpha _S
         := \inf \biggl\{ \sum_{j = 1}^k |a_j| \cdot |\chi(S_j)|
            \biggm|\; & k \in \N,\  
                      a_1, \dots, a_k \in \R\setminus\{0\},\\
                    & \text{$S_1, \dots, S_k$ are $d$-fold products}\\ 
                    & \text{\phantom{$S_1$~~} of oriented
                    closed connected surfaces,} \\
                    & \text{$f_1 \colon S_1 \rightarrow X, \dots, f_k
                      \colon S_k \rightarrow X$ continuous}\\
                    & \text{with~$\sum_{j=1}^k a_j \cdot H_{2\cdot d}(f_j;\R) \fclr{S_j} = \alpha$}
            \biggr\}.
  \end{align*}
  In other words, $\fsn\cdot _S$ measures the size of homology classes
  in terms of products of surfaces. In general, this functorial
  semi-norm is \emph{not} finite~\cite{kotschickloeh} because not
  every homology class in even degree can be represented by a product
  of surfaces.  
\end{exa}

\begin{prop}[The surface semi-norm is the $\ell^1$-semi-norm in
  degree~$2$]
  \label{prop:surfacel1fsns}
  Let $X$ be a topological space, and let $\alpha \in H_2(X;\R)$. Then 
  \[ \|\alpha\|_1 = 2 \cdot \fsn\alpha_S.
  \]
\end{prop}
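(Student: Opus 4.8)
The plan is to prove the equality $\|\alpha\|_1 = 2 \cdot \fsn\alpha_S$ by establishing two inequalities, and to reduce the problem to the fundamental class of a single surface. First I would recall the classical fact that in degree $2$ every homology class can be represented geometrically: given $\alpha \in H_2(X;\R)$, one can approximate it (up to scaling) by pushforwards of fundamental classes of oriented closed connected surfaces. So the natural first step is to prove the statement for $\alpha = H_2(f;\R)\fclr{S}$ where $S$ is a single oriented closed connected surface of genus $g$ and $f \colon S \to X$ is continuous, and then extend by the triangle inequality and the infimum definitions.

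The key computational input is the simplicial volume of surfaces: for an oriented closed connected surface $S_g$ of genus $g \geq 1$ one has $\|S_g\| = 2 \cdot |\chi(S_g)| = 4g - 4$ (and $\|S^2\| = 0 = |\chi(S^2)|$ fails the factor of $2$, so the genus-$0$ and genus-$1$ cases, where $\chi \geq 0$, must be treated as giving $\fsn{\cdot}_S$-contribution zero, consistent with $\|S_g\|=0$ for $g \le 1$). Assuming this, the inequality $\|\alpha\|_1 \le 2\cdot\fsn\alpha_S$ follows from functoriality of $\|\cdot\|_1$ together with subadditivity: for any admissible representation $\alpha = \sum_j a_j\, H_2(f_j;\R)\fclr{S_j}$, one estimates $\|\alpha\|_1 \le \sum_j |a_j|\,\|S_j\| = \sum_j |a_j| \cdot 2|\chi(S_j)|$, using that a product of surfaces of total dimension $2$ is just a surface; taking the infimum over all such representations yields $\|\alpha\|_1 \le 2\cdot\fsn\alpha_S$.

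For the reverse inequality $2\cdot\fsn\alpha_S \le \|\alpha\|_1$, I would start from an efficient real cycle $c = \sum a_j \sigma_j$ representing $\alpha$ with $|c|_1$ close to $\|\alpha\|_1$, and geometrically realise it by a map from a surface. Concretely, one uses the standard construction (essentially the one behind Thom's representability in degree $2$, or a direct simplicial argument): a $2$-cycle with real coefficients can be approximated by an integral multiple of a class represented by a map from a closed oriented surface, with the genus controlled by the $\ell^1$-norm of the cycle. The relation $|\chi(S)| \le$ (number of triangles in a triangulation)$/2$, combined with $\|\alpha\|_1 = \|\alpha\|_1^\Z$-type comparisons and the fact that one can build a surface from the simplices of $c$ by gluing so that the Euler characteristic is bounded by roughly half the number of $2$-simplices, gives $2|\chi(S)| \le |c|_1$ after suitable normalisation.

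The main obstacle will be making the reverse inequality fully rigorous, since $\fsn{\cdot}_S$ is defined via the combinatorial quantity $|\chi|$ whereas $\|\cdot\|_1$ is defined via the $\ell^1$-norm of singular cycles, and these live on different footings: passing from an arbitrary real singular $2$-cycle to an honest surface requires controlling the genus (equivalently $|\chi|$) of the glued-up surface in terms of the number and coefficients of the singular simplices. The clean way to handle this is to invoke the known computation that identifies the $\ell^1$-semi-norm in degree $2$ with minimal genus/Euler characteristic data of representing surfaces — indeed, the cleanest route is to quote the precise relationship $\|\cdot\|_1 = 2\,\fsn{\cdot}_S$ as following from Gromov's computation $\|S_g\| = 2|\chi(S_g)|$ together with the degree-$2$ representability of homology classes by surface maps, and then verify the two infima match on the nose. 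I expect the forward inequality to be essentially immediate, and the surface-genus bound in the reverse direction to be where all the real work lies.
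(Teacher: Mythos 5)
Your first inequality is correct, and your instinct to settle the matter by quoting the known degree-$2$ computation is, in effect, exactly what the paper does: its entire proof is a citation of Barge and Ghys~\cite[Proposition~1.9]{bargeghys}, plus one caveat about $S^2$-summands. The forward direction works just as you say, because $\|S\| \leq 2\cdot|\chi(S)|$ holds for \emph{every} oriented closed connected surface (equality for genus~$\geq 1$, and $0 \leq 4$ for~$S^2$), so functoriality and the triangle inequality for~$\|\cdot\|_1$ give $\|\alpha\|_1 \leq 2\cdot\fsn{\alpha}_S$ after passing to the infimum.

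The gap is in how you propose to justify the reverse inequality. You claim it ``follows from Gromov's computation $\|S_g\| = 2\cdot|\chi(S_g)|$ together with the degree-$2$ representability of homology classes by surface maps''. It does not: representability hands you, for a given~$\alpha$, a \emph{single} representation $\alpha = a \cdot H_2(f;\R)\fclr{S}$ with no control on~$|a|\cdot\|S\|$ in terms of~$\|\alpha\|_1$, so these two facts only produce \emph{upper} bounds on both sides ($\|\alpha\|_1 \leq |a|\cdot\|S\|$ and $\fsn{\alpha}_S \leq |a|\cdot|\chi(S)|$), never the needed lower bound $2\cdot\fsn{\alpha}_S \leq \|\alpha\|_1$. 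That lower bound --- the conversion of efficient cycles into efficient surface representations --- is precisely the content of the Barge--Ghys result, and it is a theorem, not a formal consequence of the two facts you list. Your gluing sketch points toward such a proof but breaks down at exactly the point the paper's proof flags: sphere components of a glued-up surface satisfy $2\cdot|\chi| = 4$ no matter how few $2$-simplices they consume (note that $|\chi(S^2)| = 2$, not~$0$ as you wrote), so the estimate ``$2\cdot|\chi| \leq$ number of triangles'', which is fine for components of non-positive Euler characteristic, fails for them. The repair --- which is the parenthetical remark in the paper's proof --- is that $S^2$-summands may be ignored on both sides: they contribute~$0$ to~$\|\cdot\|_1$ by functoriality, and arbitrarily little to~$\fsn{\cdot}_S$ by precomposing with self-maps of~$S^2$ of degree~$d$ and dividing the coefficient by~$d$; the Barge--Ghys argument then applies to the classes that remain. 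Until either that argument is carried out in full (including the passage from real to integral cycles) or the literature is cited for it, the reverse inequality remains unproven in your write-up.
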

\begin{proof}
  This follows from a result of Barge and
  Ghys~\cite[Proposition~1.9]{bargeghys} (notice that their
  argument applies only to classes that do not need summands
  represented by~$S^2$~\cite[proof of Lemme~1.7]{bargeghys}; 
  however, we can safely ignore these summands as they do not
  contribute to the $\ell^1$-semi-norm).
\end{proof}

\begin{question}\label{q:ssn}
  Does the surface semi-norm vanish on homology classes (of non-zero
  degree) of simply-connected spaces?
\end{question}

Classical arguments from algebraic topology show that this is indeed
true in degrees $2$ and $4$ (see Proposition~\ref{prop:ssnlowdeg}); however, the
question is open in high degrees.

Similarly to the surface semi-norm~$\fsn\cdot _S$, we can also define
functorial semi-norms by looking at domination by, e.g., hyperbolic manifolds
(Example~\ref{exa:hypfsn}).

An interesting aspect of functorial semi-norms is that suitable
functorial semi-norms give a systematic way to deduce degree theorems
for maps between manifolds:  

\begin{rem}[Degree theorems]\label{rem:degthms}
  If $\fsn\cdot$ is a functorial semi-norm on singular homology,
  then by definition we have for all continuous maps~$f \colon M
  \longrightarrow N$ of oriented closed connected manifolds of the
  same dimension the estimate
  \[ |\deg f| \cdot \fsn[big]{\fclr N}
     \leq \fsn[big]{\fclr M};
  \]
  hence, if
  $\fsn{\fclr N} \neq 0$, then we obtain the restriction
  \[ |\deg f| 
     \leq 
     \frac{\fsn[big]{\fclr M}}
          {\fsn[big]{\fclr N}}
  \]
  on the mapping degree. Such restrictions are particularly
  interesting when there are -- at least for certain classes of
  (Riemannian) manifolds -- estimates of~$\fsn{\fclr{\args}}$ in terms
  of the Riemannian volume or other geometric invariants.

  For example, powerful degree theorems have been obtained by the use
  of simplicial volume and its variations~\cite[Section~0.5,
    Section~1.2]{vbc,loehsauer}.
\end{rem}

Conversely, in the following sections, we will translate knowledge
about mapping degrees into constructions of functorial semi-norms with
specific properties.


\section{Representing homology classes by manifolds}\label{sec:repmfd}
As mentioned in the introduction, one of our main tools is to
represent singular homology classes by manifolds. For the sake of
completeness, we recall the following classical result:

\begin{thm}\label{thm:rephomology}
  Let $X$ be a connected \CW-complex, let $d \in \N$ and let $\alpha \in H_d(X;\Q)$
  be a singular homology class. 
  \begin{enumerate}
    \item Then there exists an~$a \in \Q\setminus\{0\}$ and an
      oriented closed connected \mbox{$d$-di}\-men\-sion\-al smooth
      manifold~$M$ together with a continuous map~$f \colon M
      \longrightarrow X$ such that
      \[ a \cdot H_d(f;\Q)\fclq M  = \alpha, 
      \]
      where $\fclq M \in H_d(M;\Q)$ is the rational fundamental class
      of~$M$.
    \item If $X$ is homotopy equivalent to a \CW-complex with finite
      $2$-skeleton and $d \geq 4$, then there exists an~$a \in
      \Q\setminus\{0\}$ and an oriented closed connected 
      $d$-dimensional manifold~$M$ together with a continuous map~$f
      \colon M \longrightarrow X$ such that
      \[ a \cdot H_d(f;\Q)\fclq M  = \alpha 
      \]
      and such that in addition $\pi_1(f) \colon \pi_1(M)
      \longrightarrow \pi_1(X)$ is an isomorphism.
  \end{enumerate}
\end{thm}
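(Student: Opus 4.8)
The plan is to handle the two parts with different classical machinery: part~(1) via Thom's bordism representation theorem, and part~(2) via surgery below the middle dimension to correct the fundamental group. For part~(1), I would consider the Thom homomorphism
\[ \mu\colon \Omega^{\SO}_d(X) \longrightarrow H_d(X;\Z), \qquad [N,g] \longmapsto H_d(g;\Z)[N]_\Z, \]
the natural transformation from oriented bordism to singular homology (here $[N]_\Z \in H_d(N;\Z)$ is the integral fundamental class). The crucial input is Thom's theorem that $\mu$ is \emph{rationally surjective}, i.e.\ that $\mu \tensor \id_\Q$ maps onto $H_d(X;\Q)$; this follows from the rational collapse of the oriented-bordism Atiyah--Hirzebruch spectral sequence, $\Omega^{\SO}_*\tensor\Q$ being a polynomial ring concentrated in even degrees. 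Given $\alpha\in H_d(X;\Q)$, rational surjectivity then provides an integer $m\neq 0$ and a class $[N,g]$ with $H_d(g;\Q)\fclq N = m\cdot\alpha$, where $\fclq N$ is the image of $[N]_\Z$ in $H_d(N;\Q)$.

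The manifold $N$ supplied by bordism need not be connected, so the next step is to connect it up without altering the homological data. For $d\geq 2$ I would write $N=N_1\sqcup\cdots\sqcup N_r$ with each $N_i$ connected, form the connected sum $M=N_1\csum\cdots\csum N_r$, and use the pinch map $p\colon M\to \bigvee_i N_i$, which satisfies $H_d(p;\Q)\fclq M=\sum_i \fclq{N_i}$ under the identification $H_d(\bigvee_i N_i;\Q)\cong\bigoplus_i H_d(N_i;\Q)$. Composing $p$ with $\bigvee_i g_i\colon\bigvee_i N_i\to X$ yields a map $f\colon M\to X$ with $H_d(f;\Q)\fclq M=H_d(g;\Q)\fclq N=m\cdot\alpha$; setting $a=1/m$ gives part~(1). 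The cases $d\in\{0,1\}$ are immediate, such a class being represented by a point respectively by a single loop $S^1\to X$.

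For part~(2) I would start from the pair $(M,f)$ just constructed and improve $\pi_1(f)$ by surgery below the middle dimension, which is available precisely because $d\geq 4$. After replacing $X$ by a homotopy-equivalent \CW-complex with finite $2$-skeleton, $\pi_1(X)$ is finitely presented. First I would make $\pi_1(f)$ \emph{surjective}: for each of finitely many generators $\gamma_j$ of $\pi_1(X)$, replace $M$ by $M\csum(S^1\times S^{d-1})$ and extend $f$ by the map $S^1\times S^{d-1}\to X$ that projects to $S^1$ and then runs along $\gamma_j$. Since this map factors through $S^1$ and $H_d(S^1;\Q)=0$, the image of the fundamental class is unchanged, while $\pi_1$ acquires the new generator $\gamma_j$ (here $d\geq 3$ makes the connecting sphere simply connected). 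Next I would make $\pi_1(f)$ \emph{injective}: the kernel of the now-surjective map $\pi_1(M)\to\pi_1(X)$ is normally finitely generated, being the kernel of a surjection from a finitely generated group onto a finitely presented one, so I would represent normal generators $w_1,\dots,w_k$ by disjoint embedded framed circles (possible as $d\geq 3$ and $M$ is orientable) and perform surgery on them. Because each $w_i$ dies in $\pi_1(X)$, the restriction $f|_{S^1}$ is null-homotopic and $f$ extends across the surgery trace; because $d\geq 4$, surgery on a circle quotients $\pi_1$ by exactly the normal closure of $w_i$. The trace is a bordism over $X$ between the old and the new $(M,f)$, so the image of the fundamental class -- and hence the identity $a\cdot H_d(f;\Q)\fclq M=\alpha$ -- is preserved, while $\pi_1(f)$ becomes an isomorphism.

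The genuinely delicate step is part~(2): one must simultaneously guarantee that the kernel of $\pi_1(M)\to\pi_1(X)$ is normally finitely generated -- this is exactly where the finite $2$-skeleton hypothesis enters, through finite presentability of $\pi_1(X)$ -- and that surgering the corresponding circles changes the fundamental group in the controlled way $\pi_1(M)/\langle\langle w_i\rangle\rangle$ without disturbing the homological normalisation. Both the dimension bound $d\geq 4$ (so that $1$-dimensional surgery lies strictly below the middle dimension and the glued-in $S^{d-2}$ contributes nothing to $\pi_1$) and the bordism invariance of $H_d(f;\Q)\fclq M$ are essential here; by contrast, part~(1) is a direct application of Thom's theorem together with the connected-sum bookkeeping.
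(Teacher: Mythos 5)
Your proof is correct, and its overall strategy --- represent a non-zero multiple of $\alpha$ via Thom, then modify $(M,f)$ within its bordism class over $X$ so that $\pi_1(f)$ becomes an isomorphism, using bordism invariance of $H_d(f;\Q)\fclq M$ --- matches the paper's. The difference lies in how part~(2) is executed: the paper cites Kreck's modified surgery theory as a black box, taking $B = X \times \BSO \longrightarrow \BO$ and invoking Proposition~4 of \emph{Surgery and Duality} to produce, within the given bordism class over $X$, a map $g \colon N \longrightarrow X$ that is a $2$-equivalence (in particular a $\pi_1$-isomorphism), followed by exactly the same closing computation ($H_*(F;\Q)[\partial W]=0$) that you perform. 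You instead unwind that black box by hand: first $0$-surgeries (connected sums with $S^1\times S^{d-1}$ mapped along generators of $\pi_1(X)$) to make $\pi_1(f)$ surjective, then $1$-surgeries on framed embedded circles to kill the kernel, using the lemma that the kernel of a surjection from a finitely generated group onto a finitely presented group is normally finitely generated --- which is precisely where the finite $2$-skeleton hypothesis enters, just as it does inside Kreck's proof. Your route is more elementary and self-contained, needing only classical surgery below the middle dimension plus the bound $d \geq 4$ to control $\pi_1$ under circle surgery; the paper's route is shorter and in fact yields the stronger conclusion that $g$ is a $2$-equivalence, of which only the $\pi_1$-statement is used. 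Your treatment of part~(1) also fills in a detail the paper delegates to the citation of Thom: the manifold provided by rational surjectivity of $\Omega^{\SO}_d(X) \longrightarrow H_d(X;\Z)$ need not be connected, and your connected-sum/pinch-map bookkeeping (with the separate treatment of $d \in \{0,1\}$) handles this correctly.
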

\begin{proof}
  The first part is a classical result by
  Thom~\cite{thom}. 

  For the second statement, we apply surgery theory as in
  \cite{kreck}.  Using the notation of \emph{loc.\ cit.}, 
  let $B := X
  \times \BSO$ where $\BSO$ is the classifying space of the stable
  special orthogonal group and let $B \longrightarrow \BO$ be the
  fibration given by projection to~$\BSO$ and the canonical
  covering~$\BSO \longrightarrow \BO$.

  Given an oriented closed connected smooth manifold~$\bar \nu \colon
  M \to \BSO$ and a map $f \colon M \longrightarrow X$, we obtain the
  $B$-manifold $f \times \bar \nu \colon M \longrightarrow X \times
  \BSO =B$.  Hence,
  there is an oriented bordism~$F \colon W \longrightarrow X$ over~$X$
  from~$f \colon M \longrightarrow X$ to a map~$g \colon N
  \longrightarrow X$ such that $g$ is a
  \mbox{$2$-equi}\-valence~\cite[Proposition 4]{kreck}; in particular,
  $g$ induces an isomorphism on fundamental groups. A straightforward
  computation in singular homology shows that
  \[ H_*(g;\Q) \fclq N = H_*(f;\Q)\fclq M - H_*(F;\Q)[\partial W]_\Q
                       = H_*(f;\Q)\fclq M; 
  \]
  choosing $f$ as provided by part~(1) finishes the proof.
\end{proof}

We next extend Theorem \ref{thm:rephomology} to general path-connected
spaces and to homology classes in~$H_*(\args;\R)$ which lie in the
image of the change of coefficients homomorphism~$H_*(\args;\Q)
\longrightarrow H_*(\args;\R)$.  Such classes are called
\emph{rational}, and by the universal coefficients theorem, every
class in~$H_*(\args;\R)$ is a finite $\R$-linear combination of
rational classes.

\begin{cor}\label{cor:repspacesr}
  Let $X$ be a path-connected topological space, let $d \in \N$, and let 
  $\alpha \in H_d(X;\R)$ be rational. 
  \begin{enumerate}
    \item Then there exists an $a \in \Q \setminus \{0\}$ and an
      oriented closed connected smooth $d$-manifold~$M$ together with
      a continuous map~$f \colon M \longrightarrow X$ such that
      $a \cdot H_d(f;\R)\fclr M = \alpha$,
      where $\fclr M \in H_d(M;\R)$ is the real fundamental class of~$M$. 
    \item If $X$ is simply connected and $d \geq 4$, then there is
      an~$a \in \Q \setminus\{0\}$, an oriented closed simply
      connected smooth $d$-manifold~$M$, and a continuous map~$f \colon M
      \longrightarrow X$ with~$a \cdot H_d(f;\R)\fclr M = \alpha$.
  \end{enumerate}
\end{cor}
\begin{proof}
  Out of the combinatorial data of a singular cycle in~$C_d(X;\R)$
  representing~$\alpha$ we can construct a connected finite
  \CW-complex~$X'$, a rational homology class~$\alpha' \in H_d(X';\R)$
  and a continuous map~$f' \colon X' \longrightarrow X$ such that
  $H_d(f';\R)(\alpha') = \alpha$; if $X$ is simply connected, then we
  can also assume that $X'$ is simply connected.  Now the claim easily
  follows from the universal coefficient theorem and the previous
  theorem.
\end{proof}

\section[Generating functorial semi-norms]{Generating functorial semi-norms via special spaces}
\label{sec:genfuncsn}

Every functorial semi-norm on singular homology induces by restriction
a functorial semi-norm on the top homology of oriented closed
connected manifolds. Conversely, examples of functorial semi-norms on
singular homology can be generated by extending functorial semi-norms
on the top homology of oriented closed connected manifolds (of a
given dimension):

\begin{defi}[Associated semi-norm]\label{def:assocfunsn}
  Let $d \in \N$, let $\Mfd_d$ denote the class of all oriented
  closed connected $d$-manifolds, and let $S \subset \Mfd_d$ be a subclass.  
  \begin{itemize}
    \item
      A \emph{functorial semi-norm on 
        fundamental classes of oriented closed connected $d$-manifolds in~$S$}, or briefly a 
        \emph{functorial $S$-semi-norm}, is
      a map $v \colon S \longrightarrow [0,\infty]$ such that
      \[ |\deg f| \cdot v(N) \leq v(M) 
      \]
      holds for all continuous maps~$f \colon M \longrightarrow N$
      with~$N$,~$M \in S$.

      If $S = \Mfd_d$, then we call such a~$v$ a \emph{functorial semi-norm
        on fundamental classes of oriented closed connected
        $d$-manifolds}, briefly a \emph{functorial $\Mfd_d$-semi-norm}.
    \item
      Let $v$ be an $S$-functorial semi-norm. 
      The
      \emph{associated semi-norm}~\mbox{$\fsn{\cdot}$} on singular homology in
      degree~$d$ is defined as follows: For a topological space~$X$ and a
      homology class~$\alpha \in H_d(X;\R)$ we set
      \begin{align*}
            \fsn{\alpha} 
         := \inf \biggl\{ \sum_{j = 1}^k |a_j| \cdot v(M_j)
            \biggm|\; & k \in \N,\  
                      a_1, \dots, a_k \in \R\setminus\{0\},\\
                    & M_1, \dots, M_k \in S,\\
                    & \text{$f_1 \colon M_1 \rightarrow X, \dots, f_k
                      \colon M_k \rightarrow X$ continuous}\\
                    & \text{with~$\sum_{j=1}^k a_j \cdot H_d(f_j;\R) \fclr{M_j} = \alpha$}
            \biggr\};
      \end{align*}
      we use the conventions~$r \cdot \infty := \infty$ for all~$r \in
      \R_{>0}$ and $\inf \emptyset := \infty$.
  \end{itemize}
\end{defi}

\begin{thm}[Generating functorial semi-norms]\label{thm:genfunsn}
  Let $d \in \N$, let $S \subset \Mfd_d$ be a subclass, and let
  $\fsn{\cdot}$ be the semi-norm associated with a functorial
  semi-norm $v \colon S \longrightarrow [0, \infty]$ on
  fundamental classes of oriented closed connected $d$-manifolds
  in~$S$ (see Definition~\ref{def:assocfunsn}).
  \begin{enumerate}
  \item \label{thm:genfunsnv}
    Then $\fsn{\cdot}$ is a functorial semi-norm on singular
    homology in degree~$d$, and for all oriented closed connected
    $d$-manifolds~$M$ in~$S$ we have
    \[ \fsn[big]{\fclr M}
       = v(M).
    \]
  \item \label{thm:genfunsnfinite}
    If $S = \Mfd_d$ and $v$ is finite, then so is~$\fsn{\cdot}$.
  \item
    The associated semi-norm~$\fsn{\cdot}$ is maximal in the following
    sense: If $\fsn{\cdot}\!'$ is a functorial semi-norm on singular
    homology in degree~$d$ that extends~$v$, then 
    $\fsn{\cdot}\!' \leq \fsn{\cdot}$.
  \end{enumerate}
\end{thm}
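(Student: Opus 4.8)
The plan is to prove the three parts in turn, where the main work lies in establishing functoriality (part~\eqref{thm:genfunsnv}) and the extension-value formula $\fsn{\fclr M} = v(M)$.

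First I would verify that $\fsn{\cdot}$ really is a semi-norm on each $H_d(X;\R)$, i.e.\ that it satisfies the three axioms. Homogeneity under scaling by $a \in \R\setminus\{0\}$ is immediate from the defining infimum, since multiplying the class $\alpha$ by $a$ corresponds to rescaling each coefficient $a_j$ by $a$, and $|a \cdot a_j| = |a|\cdot|a_j|$; vanishing on $0$ follows from using the empty sum (or from homogeneity). The triangle inequality $\fsn{\alpha+\beta} \leq \fsn{\alpha} + \fsn{\beta}$ comes from concatenating an admissible representing system for $\alpha$ with one for $\beta$ to obtain an admissible system for $\alpha + \beta$, so the infimum defining $\fsn{\alpha+\beta}$ is bounded above by the sum of the two separate sums; taking infima gives the claim. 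Then I would check functoriality: given a continuous map $g \colon X \to Y$ and $\alpha \in H_d(X;\R)$, any admissible system $(a_j, M_j, f_j)$ representing $\alpha$ yields the system $(a_j, M_j, g\circ f_j)$, which represents $H_d(g;\R)(\alpha)$ because $H_d(g;\R)$ is linear and $H_d(g\circ f_j;\R) = H_d(g;\R)\circ H_d(f_j;\R)$. Since the $v(M_j)$ are unchanged, this exhibits an admissible competitor for $\fsn[big]{H_d(g;\R)(\alpha)}$ with the same value $\sum_j |a_j|\cdot v(M_j)$, and passing to the infimum gives $\fsn[big]{H_d(g;\R)(\alpha)} \leq \fsn{\alpha}$.

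Next I would prove the value formula $\fsn{\fclr M} = v(M)$ for $M \in S$. The inequality $\fsn{\fclr M} \leq v(M)$ is trivial: the single-term system $(1, M, \id_M)$ is admissible for $\fclr M$ and contributes exactly $v(M)$. The reverse inequality $v(M) \leq \fsn{\fclr M}$ is where the functoriality condition on $v$ enters, and I expect this to be the main obstacle. Given any admissible system $\sum_{j=1}^k a_j \cdot H_d(f_j;\R)\fclr{M_j} = \fclr M$ with $f_j \colon M_j \to X$ — and here $X = M$ — I would use the defining inequality $|\deg h|\cdot v(M) \leq v(M_j)$ for maps $h \colon M_j \to M$, applied to $h = f_j$, to bound $v(M)$ in terms of the $v(M_j)$. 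The delicate point is that the relation $\sum_j a_j\cdot \deg(f_j) = 1$ (obtained by evaluating against the fundamental cohomology class, since $H_d(f_j;\R)\fclr{M_j} = \deg(f_j)\cdot\fclr M$ in $H_d(M;\R) \cong \R$) together with $|a_j|\cdot v(M_j) \geq |a_j|\cdot|\deg f_j|\cdot v(M)$ should give $\sum_j |a_j|\cdot v(M_j) \geq v(M)\cdot\sum_j |a_j|\cdot|\deg f_j| \geq v(M)\cdot\bigl|\sum_j a_j\cdot\deg f_j\bigr| = v(M)$, and I would need to handle the case $v(M) = \infty$ separately via the convention $r\cdot\infty = \infty$.

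Part~\eqref{thm:genfunsnfinite} follows by combining finiteness of $v$ with the representability result Corollary~\ref{cor:repspaces}\,(1): for any $\alpha \in H_d(X;\R)$ there exist $a \in \R\setminus\{0\}$, a manifold $M \in \Mfd_d = S$, and $f \colon M \to X$ with $a\cdot H_d(f;\R)\fclr M = \alpha$, hence $\fsn{\alpha} \leq |a|\cdot v(M) < \infty$. Finally, part~(3) is a short maximality argument: if $\fsn{\cdot}\!'$ is any functorial semi-norm extending $v$, then for an admissible system $\sum_j a_j\cdot H_d(f_j;\R)\fclr{M_j} = \alpha$ the semi-norm axioms and functoriality of $\fsn{\cdot}\!'$ give $\fsn[big]{\alpha}\!' \leq \sum_j |a_j|\cdot\fsn[big]{H_d(f_j;\R)\fclr{M_j}}\!' \leq \sum_j |a_j|\cdot\fsn[big]{\fclr{M_j}}\!' = \sum_j |a_j|\cdot v(M_j)$; taking the infimum over all admissible systems yields $\fsn[big]{\alpha}\!' \leq \fsn{\alpha}$.
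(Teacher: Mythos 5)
Your proposal is correct and follows essentially the same route as the paper's proof: the identity-map system for the inequality $\fsn{\fclr M} \leq v(M)$, the relation $\sum_j a_j \cdot \deg f_j = 1$ combined with functoriality of $v$ for the reverse inequality, Corollary~\ref{cor:repspaces} for finiteness, and the triangle-inequality/functoriality argument for maximality. The only difference is that you spell out the semi-norm axioms and functoriality checks that the paper dismisses as a ``straightforward computation,'' which is fine.
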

\begin{proof}
  A straightforward computation shows that $\fsn{\cdot}$ as defined in
  Definition~\ref{def:assocfunsn} is indeed a functorial semi-norm in
  degree~$d$. If $M$ is an oriented closed connected $d$-manifold in~$S$,
  then representing~$\fclr M$ by~$\id_M \colon M\longrightarrow M$
  shows that $\fsn {\fclr M} \leq v(M)$. On the other hand, $v(M) \leq
  \fsn {\fclr M}$ as we now show.  Let 
  \[ \fclr M = \sum_{j=1}^k a_j \cdot H_d(f_j;\R)\fclr{M_j}
             = \sum_{j=1}^k a_j \cdot \deg f_j \cdot \fclr M
  \]
  be a representation of~$\fclr M$ as in
  Definition~\ref{def:assocfunsn}; then $1 = \sum_{j=1}^k a_j \cdot
  \deg f_j$ and hence
  \begin{align*}
    v(M) \leq \sum_{j=1}^k |a_j| \cdot |\deg f_j| \cdot v(M) 
         \leq \sum_{j=1}^k |a_j| \cdot v(M_j)
  \end{align*}
  by functoriality of~$v$ on~$S$. This proves the first part.

  The second part follows from the fact that every real singular
  homology class of a path-connected space is an $\R$-linear
  combination of rational classes, which can -- up to a non-zero
  factor -- be represented by oriented closed connected manifolds
  (Corollary~\ref{cor:repspacesr}).
  
  The last part follows directly from the construction
  of~$\fsn{\cdot}$, the triangle inequality, and the definition of
  functoriality.
\end{proof}

For example, the $\ell^1$-semi-norm can be viewed as the functorial
semi-norm generated by simplicial volume:

\begin{prop}
  Let $d \in \N \setminus \{3\}$. Then on the category of connected
  finite \CW-complexes the functorial semi-norm on singular homology
  in degree~$d$ associated with the simplicial volume in dimension~$d$
  coincides with the $\ell^1$-semi-norm in degree~$d$.
\end{prop}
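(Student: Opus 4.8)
The plan is to write $\fsn{\cdot}$ for the semi-norm associated with the functorial $\Mfd_d$-semi-norm $v := \|\cdot\|$ (simplicial volume); note that $v$ really is a functorial $\Mfd_d$-semi-norm, since $\|M\| = \|\fclr M\|_1$ and the $\ell^1$-semi-norm is functorial, which gives the degree estimate $|\deg f|\cdot\|N\|\le\|M\|$. I would then prove the two inequalities $\|\alpha\|_1 \le \fsn\alpha$ and $\fsn\alpha \le \|\alpha\|_1$ separately. The first is the easy one: for any representation $\alpha = \sum_j a_j\cdot H_d(f_j;\R)\fclr{M_j}$ as in Definition~\ref{def:assocfunsn}, functoriality and subadditivity of $\|\cdot\|_1$ give $\|\alpha\|_1 \le \sum_j |a_j|\cdot\|M_j\|$, and taking the infimum yields $\|\alpha\|_1 \le \fsn\alpha$. (Equivalently, $\|\cdot\|_1$ is itself a functorial semi-norm extending $v$, so this also follows from the maximality clause of Theorem~\ref{thm:genfunsn}.)

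The content is the reverse inequality $\fsn\alpha \le \|\alpha\|_1$. Since $X$ is a finite \CW-complex, $H_d(X;\R)$ is finite-dimensional, so I would first treat rational classes $\alpha \in H_d(X;\Q)$; expanding an arbitrary real class in a rational basis then shows that $\fsn{\cdot}$ is finite on all of $H_d(X;\R)$, hence continuous, and the inequality extends to every class by density of $H_d(X;\Q)$ together with continuity of $\|\cdot\|_1$. For rational $\alpha$ and $\epsilon>0$ I would choose a \emph{rational} fundamental cycle $z$ with $|z|_1 \le \|\alpha\|_1 + \epsilon$ (the real infimum defining $\|\alpha\|_1$ is approximated by rational cycles, as rational boundaries are $\ell^1$-dense in real boundaries and $\partial$ is bounded). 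Clearing denominators gives an integral cycle $w$ representing $N\alpha$ whose reduced form consists of $n := |w|_1 = N\cdot|z|_1$ singular simplices with coefficients $\pm 1$. Gluing $n$ oriented copies of $\Delta^d$ along the $(d-1)$-faces that cancel in $\partial w = 0$ (these carry identical singular simplices, so the maps match) produces a closed oriented $d$-dimensional pseudomanifold $P$ with an induced map $\bar f\colon P \to X$ satisfying $H_d(\bar f;\R)(\fclr P) = N\alpha$, and $\|P\| \le n$ because $\fclr P$ is represented by the sum of the $n$ top simplices.

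The remaining step is to replace the pseudomanifold $P$ by a genuine manifold, and this is where I expect the main obstacle and the hypothesis $d \ne 3$ to enter. After normalisation the singular set of $P$ has codimension at least $2$; for $d \le 2$ the links are automatically spheres, so $P$ is already a manifold, whereas for $d \ge 4$ I would desingularise $P$ by a degree-one map $h\colon M \to P$ from a closed oriented $d$-manifold $M$, obtained by excising regular neighbourhoods of the singular strata and gluing in resolutions of their links. Choosing the inserted pieces to have amenable fundamental groups, Gromov's vanishing and additivity of simplicial volume for gluings along such pieces gives $\|M\| \le \|P\| \le n$. Then $g := \bar f\circ h$ and the scalar $1/N$ satisfy $\tfrac1N\,H_d(g;\R)\fclr M = \alpha$, so
\[ \fsn\alpha \le \tfrac1N\,\|M\| \le \tfrac1N\,\|P\| \le \tfrac nN = |z|_1 \le \|\alpha\|_1 + \epsilon, \]
and letting $\epsilon \to 0$ completes the rational case, whence the general one as above.

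The hard part is precisely the norm-controlled desingularisation in the last paragraph: a degree-one map forces $\|M\| \ge \|P\|$, so what must be arranged is the matching bound $\|M\| \le \|P\|$, i.e.\ that the pieces inserted to resolve the links contribute nothing to simplicial volume. The exclusion of $d = 3$ is expected to be genuine here, since a vertex of a $3$-dimensional pseudomanifold whose link is a closed surface of positive genus cannot be resolved without introducing pieces of positive simplicial volume, so no such norm-nonincreasing resolution is available in that dimension.
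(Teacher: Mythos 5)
Your first inequality and the reduction from real to rational classes are fine, and in degrees $d\le 2$ your gluing construction is essentially the Barge--Ghys argument that the paper itself quotes in degree~$2$. The genuine gap is the step you yourself call the hard part: the norm-controlled desingularisation of the pseudomanifold~$P$ does not exist in general, and the mechanism you propose for it cannot be made to work. There are two separate problems. First, for $d\ge 7$ a degree-one map $M\to P$ from a closed oriented manifold need not exist at all: every integral homology class is the image of the fundamental class of a finite union of oriented closed pseudomanifolds (this is exactly your gluing construction), whereas by Thom~\cite{thom} there are integral classes in degree~$7$ that are not in the image of $\Omega_7^{\SO}(X)\to H_7(X;\Z)$; performing your construction on a cycle representing such a class yields at least one pseudomanifold component admitting no degree-one map from any closed oriented manifold, since such maps composed with $P\to X$ would represent the class. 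Replacing degree one by degree~$m$ (some such $m$ exists, again by Thom) does not help: you would then need $\|M\|\le m\cdot\|P\|$, and Thom's transversality construction gives no bound whatsoever on~$\|M\|$. Second, even when a resolution exists, the tool you invoke for the bound $\|M\|\le\|P\|$ is unavailable: when you excise a neighbourhood of a singular stratum, the gluing locus is (a bundle with fibre) the link of that stratum, and links can be arbitrary $(d-1)$-dimensional (pseudo)manifolds, e.g.\ hyperbolic ones, whose fundamental groups are not amenable. Gromov's vanishing/additivity theorems for amenable gluings therefore do not apply, and no choice of filling can force them to: any compact $W$ with $\partial W = L$ satisfies $\|L\|\le (d+1)\cdot\|W,\partial W\|$, so a hyperbolic link makes every inserted piece carry positive relative simplicial volume. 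In short, your plan requires a norm-controlled version of Thom's representability theorem, and that is precisely what is not available.

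The paper takes a different route that sidesteps desingularisation entirely: for $d\ge 4$ it represents $\alpha$ up to a non-zero scalar by a map $f\colon M\to X$ from a closed oriented manifold which, by Kreck's surgery (Theorem~\ref{thm:rephomology}(2)), can be chosen to induce an isomorphism on fundamental groups; the mapping theorem in bounded cohomology~\cite{vbc,ivanov} then says that $H_d(f;\R)$ is \emph{isometric} for the $\ell^1$-semi-norm, whence $\|\alpha\|_1 = |a|\cdot\|M\|\ge\fsn{\alpha}$. No absolute control of $\|M\|$ in terms of a chosen cycle for $\alpha$ is needed, because the isometry transfers the computation of $\|\alpha\|_1$ to~$M$ exactly. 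This is also where the restriction $d\neq 3$ really comes from (surgery to fix $\pi_1$ needs $d\ge 4$, while degrees $1$ and $2$ are handled by Hurewicz and by Barge--Ghys), not from resolution of links.
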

\begin{proof}
  Clearly, the statement holds in degree~$0$. In degree~$1$ the claim
  follows directly from the Hurewicz theorem.

  In degree~$2$, one has to understand the simplicial volume of
  surfaces and how singular homology classes in degree~$2$ can be
  represented by surfaces: If $S$ is an oriented closed connected
  surface of genus~$g \geq 1$, then~\cite[p.~9,
  Proposition~C.4.7]{vbc,benedettipetronio} 
  \[ \|S\| = 4 \cdot g - 4 = 2 \cdot |\chi(S)|;
  \] 
  combining this fact with Proposition~\ref{prop:surfacel1fsns}
  proves the claim in degree~$2$.

  Suppose now that the degree~$d$ is at least~$4$. In view of
  Theorem~\ref{thm:genfunsn}~(3), the functorial semi-norm~$\fsn{\cdot}$
  associated with the simplicial volume satisfies~\mbox{$\fsn{\cdot} \geq
  \|\cdot\|_1$}; thus, it suffices to prove the reverse inequality. 

  Let $X$ be a connected finite \CW-complex, and let $\alpha \in
  H_d(X;\R)$. We can write~$\alpha = \sum_{j=1}^k a_j \cdot \alpha_j$,
  where $\alpha_1, \dots, \alpha_k \in H_d(X;\R)$ are rational and
  $a_1, \dots, a_n \in \R$. For~$n \in \N$ we let~$\alpha^{(n)} :=
  \sum_{j =1}^k a_j^{(n)} \cdot \alpha_j$, where $(a_j^{(n)})_{n \in
    \N}$ is a sequence in~$\Q$ that approximates~$a_j$; by
  construction, the~$\alpha^{(n)}$ are rational and the triangle
  inequality shows that
  \[ \lim_{n \rightarrow \infty} \fsn{\alpha^{(n)} - \alpha} = 0
     \quad\text{and}\quad
     \lim_{n \rightarrow \infty} \| \alpha^{(n)} - \alpha\|_1 = 0.
  \]
  Therefore, it suffices to prove~$\fsn{\cdot} \leq \|\cdot\|_1$ for
  rational classes in~$H_d(X;\R)$.

  If $\alpha$ is rational, then by Theorem~\ref{thm:rephomology},
  there is an~$a
  \in \R \setminus\{0\}$, and a continuous map~$f \colon
  M\longrightarrow X$ from some oriented closed connected
  $d$-manifold~$M$ such that
  \[ a\cdot H_d(f;\R) \fclr M = \alpha 
  \]
  and such that in addition~$\pi_1(f) \colon \pi_1(M) \longrightarrow
  \pi_1(X)$ is an isomorphism. Applying the mapping theorem in bounded
  cohomology~\cite[Section~3.1, Theorem~4.3]{vbc,ivanov} (combined
  with the duality principle for the
  $\ell^1$-semi-norm~\cite[Corollary on p.~17]{vbc}) shows that
  $H_d(f;\R) \colon H_d(M;\R) \longrightarrow H_d(X;\R)$ is isometric
  with respect to the $\ell^1$-semi-norm. In particular,
  \[ \|\alpha\|_1 = |a| \cdot \bigl\| \fclr M \bigr\|_1 
                  = |a| \cdot \|M\| \geq \fsn{\alpha}
                  . \qedhere
  \]
\end{proof}

The surface semi-norm is also a semi-norm defined as in
Defintion~\ref{def:assocfunsn}:

\begin{prop}
  Let $d \in \N$, let $S \subset \Mfd_{2\cdot d}$ be the subclass of
  products of $d$~oriented closed connected surfaces, and let
  \begin{align*}
    v \colon S & \longrightarrow [0,\infty] \\
    M & \longmapsto |\chi(M)|.
  \end{align*}
  Then $v$ is a functorial semi-norm on fundamental classes of
  oriented closed connected manifolds in~$S$, and the functorial
  semi-norm on~$H_{2\cdot d}(\args;\R)$ associated with~$v$ is the
  surface semi-norm of Example~\ref{exa:ssn}.
\end{prop}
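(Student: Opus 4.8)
The proposition asserts two things: that $v(M)=|\chi(M)|$ is a functorial $S$-semi-norm, and that its associated semi-norm is the surface semi-norm. The second assertion is essentially definitional, and I would dispatch it by inspection: the infimum defining $\fsn{\cdot}_S$ in Example~\ref{exa:ssn} is, summand for summand, the infimum defining the associated semi-norm in Definition~\ref{def:assocfunsn} for the class $S$ of products of $d$ surfaces and the weight $v(M)=|\chi(M)|$ --- the representing data $\sum_j a_j\cdot H_{2d}(f_j;\R)\fclr{M_j}=\alpha$ and the quantities $|a_j|\cdot v(M_j)=|a_j|\cdot|\chi(M_j)|$ are exactly those appearing in Example~\ref{exa:ssn}. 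So once $v$ is known to be a functorial $S$-semi-norm (so that Definition~\ref{def:assocfunsn} applies), the two semi-norms coincide, and by Theorem~\ref{thm:genfunsn}~(1) they then also agree with $|\chi(M)|$ on the classes $\fclr M$ with $M\in S$. The real work is therefore the first assertion.

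To prove $|\deg f|\cdot|\chi(N)|\le|\chi(M)|$ for a map $f\colon M\to N$ between products $M=A_1\times\dots\times A_d$ and $N=B_1\times\dots\times B_d$ of oriented closed connected surfaces, the plan is to combine the multiplicativity $\chi(M)=\prod_i\chi(A_i)$ with the classical degree inequality for a single surface (Kneser): $|\deg h|\cdot|\chi(B)|\le|\chi(A)|$ whenever $\chi(B)\le 0$. (If some $B_i$ is a torus then $\chi(N)=0$ and the inequality is trivial, so one may assume every $B_i$ has genus $\neq 1$.) The bridge from factors to products is the Euler class $e(N)\in H^{2d}(N;\R)$ of the tangent bundle, which satisfies $\langle e(N),\fclr N\rangle=\chi(N)$ and, by the Whitney formula, splits as $e(N)=\prod_i p_i^*e_i$ with $p_i\colon N\to B_i$ the projections and $e_i\in H^2(B_i;\R)$ the Euler class of $B_i$. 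Writing $g_i:=p_i\circ f\colon M\to B_i$, naturality and the definition of the degree give
\[
  \deg f\cdot\chi(N)
    =\bigl\langle f^*e(N),\fclr M\bigr\rangle
    =\Bigl\langle \prod\nolimits_i g_i^*e_i,\fclr M\Bigr\rangle .
\]
Expanding each $g_i^*e_i=\chi(B_i)\cdot g_i^*\omega_{B_i}$ in the Künneth decomposition of $H^*(M;\R)$ then produces a degree matrix $(d_{ij})$, whose entries $d_{ij}$ are the degrees of $g_i$ restricted to the $j$-th factor (hence bounded by Kneser), together with correction terms coming from the $H^1\!\cup\!H^1$-part of Künneth.

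The hard part will be to turn these factorwise bounds into the \emph{sharp} product inequality. Since a general $f$ is not a product map, $\deg f$ is not the product of factor degrees, and expanding the permanent of $(d_{ij})$ naively loses a factor of $d!$; moreover one must show the mixed Künneth terms do not help. Obtaining the sharp bound requires genuine rigidity of maps between products of hyperbolic surfaces --- for instance via the structure of homomorphisms of products of surface groups, or via a volume/degree estimate of Besson--Courtois--Gallot type for the locally symmetric metric on $N=(\mathbb{H}^2)^d/\Gamma$, where $\vol$ is proportional to $|\chi|$ by Gauss--Bonnet. This is the crux of the argument. Finally, sphere factors are a separate obstacle: there $\chi=2>0$ and the factorwise inequality fails outright (high-degree self-maps of $S^2$), so such factors must be handled by hand, using that a sphere summand can be rescaled by precomposition with a high-degree self-map and hence contributes $0$ to the infimum --- leaving the surface semi-norm, and thus the identification in the first paragraph, unaffected.
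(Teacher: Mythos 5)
Your handling of the second assertion is right (and is all the paper does for that half: the two infima are given by literally the same formula), and your observation about sphere factors is a genuine catch rather than a side issue --- $(S^2)^{\times d}$ admits self-maps of degree $2^d$ while $\chi((S^2)^{\times d}) = 2^d \neq 0$, so $v$ as literally defined fails functoriality on all of $S$; consistently, Theorem~\ref{thm:genfunsn}~(1) would otherwise force the surface semi-norm to be nonzero on the flexible class of $(S^2)^{\times d}$. The statement implicitly requires restricting to products of surfaces of genus at least~$1$, and, as you note, this costs nothing for the associated semi-norm since sphere-containing summands contribute arbitrarily little to the infimum (for instance, precompose with collapse maps $T^2 \to S^2$). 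However, the core of the first assertion --- the sharp inequality $|\deg f| \cdot |\chi(N)| \leq |\chi(M)|$ for an \emph{arbitrary} (non-product) map between products of hyperbolic surfaces --- is exactly the step your proposal does not prove. You set up the Euler-class/K\"unneth expansion, concede that it only yields factorwise Kneser bounds, that the naive expansion loses a factor of $d!$, and that the mixed K\"unneth terms are uncontrolled, and you then defer the ``crux'' to unspecified rigidity results. Neither tool you gesture at closes the gap as stated: Besson--Courtois--Gallot is a rank-one theorem and does not apply to the higher-rank space $(\mathbb{H}^2)^d$ (its higher-rank extension by Connell and Farb in fact excludes local $\mathbb{H}^2$ factors), and no argument is sketched for the surface-group route. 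So the only nontrivial claim of the proposition remains unproven.

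The paper closes precisely this gap with a tool you never invoke: the simplicial volume. One always has the trivial degree inequality $|\deg f| \cdot \|N\| \leq \|M\|$, and by the proportionality principle all products of $d$ surfaces of genus at least~$2$ --- which share the universal cover $(\mathbb{H}^2)^d$ --- satisfy $\|M\| = c \cdot \vol(M) = c_d \cdot |\chi(M)|$ by Gauss--Bonnet and multiplicativity of $\chi$, where $c_d > 0$ because simplicial volume of products of hyperbolic surfaces is positive (Gromov's product inequality). Dividing the degree inequality by $c_d$ gives the sharp inequality with no rigidity of maps whatsoever; torus factors are then trivial to handle (a torus factor in $N$ gives $\chi(N) = 0$, a torus factor in $M$ gives $\|M\| = 0$ and hence $\deg f = 0$). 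In short: your proposal correctly isolates where the difficulty sits and correctly flags the sphere anomaly that the paper glosses over, but it leaves the decisive inequality unestablished, whereas the paper's proportionality argument makes that inequality a one-line consequence of functoriality of the simplicial volume.
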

\begin{proof}
  That $v$ indeed is functorial can, for example, be seen via the
  simplicial volume, the proportionality principle for simplicial
  volume, and the multiplicativity of the Euler
  characteristic~\cite[p.~303, Corollary~6.5]{gromov,loehmeasure}.

  That the semi-norm associated with~$v$ and the surface semi-norm 
  coincide follows directly from the definitions.
\end{proof}

\begin{prop}\label{prop:ssnlowdeg} 
  The surface semi-norm~$\fsn \cdot _S$ vanishes on all singular
  homology classes of simply connected spaces of degree~$2$ or~$4$.
\end{prop}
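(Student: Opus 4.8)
The unifying idea I would use is that a product of two closed surfaces one of whose factors is a torus has Euler characteristic~$0$, so any homology class represented by maps from such products incurs vanishing $\fsn{\cdot}_S$-cost. The whole proof then reduces to representing the classes in question by maps out of products of surfaces with a torus factor.

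In degree~$2$ this is quick. By Proposition~\ref{prop:surfacel1fsns} we have $\fsn{\alpha}_S=\tfrac12\|\alpha\|_1$ for all $\alpha\in H_2(X;\R)$, and the $\ell^1$-semi-norm is trivial on simply connected spaces (Example~\ref{exa:l1seminorm}); hence $\fsn{\alpha}_S=0$. (Equivalently, Hurewicz gives $H_2(X;\Q)=\pi_2(X)\otimes\Q$, so $\alpha$ is up to a scalar represented by a map $S^2\to X$; precomposing with a degree-one collapse $T^2\to S^2$ represents $\alpha$ by a map out of $T^2$, and $\chi(T^2)=0$ forces $\fsn{\alpha}_S=0$.)

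For degree~$4$ I would first pass to manifolds. By the second part of Corollary~\ref{cor:repspaces}, any $\alpha\in H_4(X;\R)$ of a simply connected space~$X$ has the form $a\cdot H_4(f;\R)\fclr M$ with $a\in\R$, $M$ an oriented closed simply connected $4$-manifold, and $f\colon M\to X$; since $\fsn{\cdot}_S$ is functorial, $\fsn{\alpha}_S\le|a|\cdot\fsn{\fclr M}_S$. Thus it suffices to show $\fsn{\fclr M}_S=0$ for every such $M$, and for this I would produce a product of surfaces with a torus factor together with a map \emph{onto} $M$ of non-zero degree. If $b_2(M)=0$ then $M$ is a homotopy $4$-sphere, and a degree-one collapse $S^2\times T^2\to S^4$ composed with a homotopy equivalence $S^4\to M$ does the job. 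If $b_2(M)\ge 1$, the intersection form of $\Sigma_g\times T^2$ is non-degenerate of signature~$0$ and rank~$2+4g$, and for $g$ large enough a suitable non-zero integer multiple $k\cdot Q_M$ of the intersection form~$Q_M$ embeds isometrically into it (a standard consequence of the Hasse--Minkowski classification of rational quadratic forms). Realizing such an embedding by a map $f\colon\Sigma_g\times T^2\to M$ of degree~$k$ would then give $\fclr M=\tfrac1k H_4(f;\R)\fclr{\Sigma_g\times T^2}$ with $\chi(\Sigma_g\times T^2)=0$, so $\fsn{\fclr M}_S=0$.

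The hard part will be this last realization step. Building $f$ over the $2$-skeleton of $\Sigma_g\times T^2$ is prescribed by the chosen behaviour on $H_2$, but extending over the top cell meets an obstruction in $\pi_3(M)$ equal to the image under $f$ of the top-cell attaching map of the surface product, namely a sum of Whitehead products of the $2$-dimensional classes. Rationally this Whitehead-product term is exactly the quadratic part of the differential of the minimal Sullivan model, and it is dual to the cup-product pairing $\mathrm{Sym}^2 H^2\to H^4$ --- on~$M$ precisely the intersection form. The compatibility of $f^*$ with the intersection forms is therefore what should make the obstruction vanish after passing to the multiple~$k$; verifying this matching --- equivalently, that products of surfaces with a torus factor already generate the degree-$4$ homology of a simply connected space --- is the technical heart, and the point at which rational homotopy theory enters.
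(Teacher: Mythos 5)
Your degree-$2$ argument is correct, and both routes you give are sound: the identification $\fsn{\alpha}_S = \tfrac12\|\alpha\|_1$ from Proposition~\ref{prop:surfacel1fsns} combined with Gromov's vanishing theorem for the $\ell^1$-semi-norm on simply connected spaces (Example~\ref{exa:l1seminorm}) settles that case immediately; the paper instead argues via Hurewicz and self-maps of $S^2$ of arbitrarily large degree, which your torus-collapse variant closely mirrors. Your degree-$4$ reduction is also exactly the paper's: by Corollary~\ref{cor:repspaces} it suffices to show $\fsn{\fclr M}_S = 0$ for every oriented closed simply connected $4$-manifold~$M$, and for this one wants a map of non-zero degree onto~$M$ from a product of surfaces with vanishing Euler characteristic.

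At precisely this point, however, your proof has a genuine gap, and you say so yourself. The statement you need --- every oriented closed simply connected $4$-manifold is dominated by a product $S^1 \times S^1 \times S$ --- is exactly what the paper invokes as a citation, namely \cite[Proposition~7.1]{kotschickloeh}; the paper's proof consists of the reduction plus this reference. Your plan to prove it from scratch correctly isolates the easy algebraic half: over~$\Q$ no multiple~$k$ is even needed, since $Q_M \oplus (-Q_M)$ is a sum of hyperbolic planes, so $Q_M \otimes \Q$ embeds isometrically into the intersection form of $\Sigma_g \times T^2$ (which is $(2g+1)$ hyperbolic planes) as soon as $2g+1 \geq b_2(M)$. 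But the second half --- realizing such a rational isometric embedding of forms by an actual continuous map $\Sigma_g \times T^2 \to M$ of the corresponding degree --- is the entire content of the result, and your sketch leaves it unproven. The obstruction-theoretic matching you describe (Whitehead products versus cup products in $\pi_3(M)\otimes\Q$) is the right picture for a \emph{simply connected} domain, in the spirit of Duan--Wang-type realization theorems; but $\Sigma_g \times T^2$ is not simply connected, so one must additionally control the map on the $1$- and $3$-skeleta and show that the prescribed behaviour on~$H_2$ can be chosen so that all intermediate obstructions, not just the top one, vanish. Since you explicitly flag this step as unverified (``the technical heart''), the proposal as written does not prove the degree-$4$ case; it would be completed either by carrying out that realization argument in full or simply by citing \cite[Proposition~7.1]{kotschickloeh}, as the paper does.
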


\begin{proof} 
  Let $X$ be a simply connected topological space, and let $\alpha \in
  H_*(X;\R)$ be a homology class of degree~$2$ or~$4$.

  If $\alpha$ is of degree~$2$, then -- because $X$ is simply
  connected -- we have an isomorphism~$H_2(X;\Z) \cong
  \pi_2(X)$. Hence every integral homology class in degree~$2$ is
  represented by a map from the sphere~$S^2$. Using the universal
  coefficient theorem and the fact that $S^2$ admits self-maps of
  arbitrarily large degree, it follows that the surface semi-norm
  vanishes on~$H_2(X;\R)$.

  Let $\alpha$ now be of degree~$4$. In view of the triangle
  inequality, we can assume without loss of generality that $\alpha$
  is rational. Then by Corollary~\ref{cor:repspacesr} we can represent
  $\alpha$ as
  \[ a \cdot H_d(f;\R)\fclr M = \alpha,
  \]
  where $M$ is an oriented closed simply connected 
  $4$-manifold, $f \colon M\longrightarrow X$ is a continuous map, and
  $a \in \R \setminus \{0\}$. Moreover, the simply connected
  $4$-manifold~$M$ is dominated by a product~$S^1 \times S^1 \times
  S$, where $S$ is a suitable oriented closed connected
  surface~\cite[Proposition~7.1]{kotschickloeh}. Because $\chi(S^1
  \times S^1 \times S) =0$ it follows that $\fsn \alpha_S = 0$.
\end{proof}

Similarly to the definition of the surface semi-norm, we can also take
hyperbolic manifolds as building blocks of a functorial semi-norm:

\begin{exa}[The hyperbolic semi-norm]\label{exa:hypfsn}
  Let $d \in \N$, let $H \subset \Mfd_d$ be the subclass of all
  oriented closed connected smooth $d$-manifolds that admit a
  hyperbolic Riemannian metric. Then 
  \begin{align*}
    v \colon H & \longrightarrow [0,\infty] \\
    M & \longmapsto \vol(M)
  \end{align*}
  is well-defined and functorial (because the volume of hyperbolic
  manifolds can be expressed in terms of the simplicial
  volume~\cite[Section~0.3, Theorem~6.2]{vbc,thurston} and because the
  simplicial volume is functorial).  
  
  We point out that it is still an open problem whether every manifold
  can be dominated by a hyperbolic manifold~\cite[Conjecture~7.2]{kotschickloeh};
  so it is not known whether the functorial semi-norm
  on~$H_d(\args;\R)$ associated with~$v$ is finite.
\end{exa}

\begin{rem}[Generating functorial semi-norms via Poincar\'e spaces]
  \label{rem:poincarespace}
  Recall that a \emph{$\Q$-Poincar\'e space of formal dimension~$d$}
  is a connected \CW-complex~$X$ together with a homology class~$ [X]
  \in H_d(X;\Q)$, the \emph{fundamental class}, such that
  \[ \args \cap [X]
     \colon 
     H^*(X;\Q) \longrightarrow H_{*-d}(X;\Q)
  \]
  is an isomorphism. In particular, one can introduce the notion of
  mapping degree for continuous maps between $\Q$-Poincar\'e spaces of
  the same formal dimension.

  Similarly to Definition~\ref{def:assocfunsn} and
  Theorem~\ref{thm:genfunsn}, any functorial semi-norm on the
  fundamental classes of $\Q$-Poincar\'e complexes of a given
  dimension gives rise to an associated functorial semi-norm on
  singular homology in the given degree.
\end{rem}

\section{Functorial semi-norms (not) induced\\ from the singular chain complex}
\label{sec:chainfsn}

One source of functorial semi-norms on singular homology is the class
of functorial semi-norms on the singular chain complex: Let $d \in
\N$. A \emph{functorial semi-norm on the singular chain complex in
  degree~$d$} consists of a choice of a semi-norm~$\fsn{\cdot}$
on~$C_d(X;\R)$ for every topological space~$X$ such that the following
``functoriality'' holds: for all continuous maps~$f \colon X
\longrightarrow Y$ between topological spaces and all~$c \in
C_d(X;\R)$ we have
\[ \fsn[big]{C_d(f;\R)(c)} \leq \fsn c. 
\]
Such a functorial semi-norm on the singular chain complex is
\emph{finite} if all the semi-norms involved are finite
semi-norms. For example, the $\ell^1$-norm on the chain level
(Example~\ref{exa:l1seminorm}) is a finite functorial semi-norm on the
singular chain complex.

\begin{nonexa}[$\ell^p$-Semi-norms]\label{nonexa:lp}
  Let $d \in \N$, let $p \in (1,\infty]$, and let $|\cdot|_p$ be the
  $p$-norm on~$C_d(\args;\R)$ with respect to the (unordered) basis
  given by the set of all singular $d$-simplices. Then $|\cdot|_p$ is
  \emph{not} a functorial semi-norm on the singular chain complex in
  degree~$d$:  

  We consider $X := \{x, y\}$ with the discrete topology and~$f \colon
  X \longrightarrow X$ mapping both points to~$x$. Let $c := \sigma_x
  + \sigma_y \in C_d(X;\R)$, where $\sigma_x$ and $\sigma_y$ are the
  constant singular $d$-simplices mapping to~$x$ and~$y$
  respectively. Then 
  \[   \bigl| C_d(f;\R)(c) \bigr|_\infty
     = | 2 \cdot \sigma_x |_\infty
     = 2
     > 1 
     = | \sigma_x + \sigma_y |_\infty
     = |c|_\infty
     ,
  \]
  and for $p \in (1,\infty)$ we obtain
  \[    \bigl| C_d(f;\R)(c) \bigr|_p
      = | 2 \cdot \sigma_x |_p
      = 2
      > \sqrt[p]{1^p + 1^p}
      = |\sigma_x + \sigma_y |_p
      = |c|_p
  \]
  Hence $|\cdot|_p$ is \emph{not} functorial.
\end{nonexa}

Clearly, any [finite] functorial semi-norm on the singular chain
complex in degree~$d$ induces a [finite] functorial semi-norm on
singular homology in degree~$d$ by taking the infimum of the
semi-norms of cycles representing a given class. Notice that being
induced from a finite functorial semi-norm on the singular chain
complex is a rather strong condition:

\begin{prop}\label{prop:chainfunsn}
  Let $d \in \N$ and let $\fsn{\cdot}$ be a finite functorial
  semi-norm on the singular chain complex in degree~$d$. Then
  \[ \fsn{\cdot} \leq \fsn{ \id_{\Delta^d}} \cdot \|\cdot\|_1. 
  \]
\end{prop}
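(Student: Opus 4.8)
The plan is to exploit the functoriality of $\fsn{\cdot}$ applied directly to singular simplices, using that the standard simplex $\Delta^d$ carries a tautological singular simplex from which every other singular $d$-simplex is obtained by push-forward. The decisive observation is as follows: let $\id_{\Delta^d} \in C_d(\Delta^d;\R)$ denote the singular $d$-simplex given by the identity map of~$\Delta^d$. For any topological space~$X$ and any singular $d$-simplex $\sigma \colon \Delta^d \longrightarrow X$, viewing $\sigma$ as a continuous map we have $C_d(\sigma;\R)(\id_{\Delta^d}) = \sigma$. Hence functoriality of the chain semi-norm immediately yields
\[ \fsn{\sigma} = \fsn[big]{C_d(\sigma;\R)(\id_{\Delta^d})} \leq \fsn{\id_{\Delta^d}} \]
for every singular $d$-simplex $\sigma$ in~$X$, i.e.\ the semi-norm of an \emph{arbitrary} simplex is controlled by the single number~$\fsn{\id_{\Delta^d}}$.

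Next I would extend this bound from single simplices to arbitrary chains. Writing a chain $c \in C_d(X;\R)$ in reduced form $c = \sum_{j=1}^k a_j \cdot \sigma_j$ with pairwise distinct singular simplices $\sigma_j$, so that $|c|_1 = \sum_{j=1}^k |a_j|$, the triangle inequality and homogeneity of $\fsn{\cdot}$ together with the previous estimate give
\[ \fsn{c} \leq \sum_{j=1}^k |a_j| \cdot \fsn{\sigma_j} \leq \fsn{\id_{\Delta^d}} \cdot \sum_{j=1}^k |a_j| = \fsn{\id_{\Delta^d}} \cdot |c|_1 . \]
This already establishes the asserted inequality at the level of the singular chain complex.

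Finally, passing to homology, for a class $\alpha \in H_d(X;\R)$ I would take the infimum over all cycles $c$ representing~$\alpha$: since $\fsn{c} \leq \fsn{\id_{\Delta^d}} \cdot |c|_1$ holds for each such~$c$, and since $\fsn{\id_{\Delta^d}}$ is a fixed finite scalar by the finiteness hypothesis, the induced semi-norm satisfies $\fsn{\alpha} \leq \fsn{\id_{\Delta^d}} \cdot \|\alpha\|_1$, which is the claimed estimate. I do not expect any genuine obstacle here: the whole argument is a direct application of functoriality to the universal simplex $\id_{\Delta^d}$. The only points requiring (entirely routine) care are the reduced-form bookkeeping that guarantees $|c|_1 = \sum_j |a_j|$, the fact that the triangle inequality is only needed for finite sums (chains being finite $\R$-linear combinations), and the use of finiteness to ensure that $\fsn{\id_{\Delta^d}}$ is a real number rather than~$\infty$, so that the bound is meaningful. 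Conceptually, the content of the statement is that a finite chain-level functorial semi-norm cannot exceed a constant multiple of the $\ell^1$-semi-norm, and hence inherits all of its vanishing behaviour.
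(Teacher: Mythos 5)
Your proof is correct and is essentially the paper's own argument: both rest on the observation that any singular simplex $\sigma$ is the push-forward $C_d(\sigma;\R)(\id_{\Delta^d})$ of the tautological simplex, so functoriality bounds $\fsn{\sigma}$ by $\fsn{\id_{\Delta^d}}$, and the chain-level estimate then follows from the triangle inequality applied to a reduced-form chain. The concluding passage to homology is a harmless addition not needed for the stated chain-level inequality.
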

\begin{proof}
  Let $c = \sum_{j = 0}^k a_j \cdot \sigma_j \in C_d(X;\R)$ be a
  singular chain (in reduced form). Viewing $\id_{\Delta^d} \colon
  \Delta^d \longrightarrow \Delta^d$ as a singular $d$-simplex
  on~$\Delta^d$, functoriality of~$\fsn{\cdot}$ yields 
  \begin{align*}
    \fsn{c} \leq \sum_{j=0}^k |a_j| \cdot \fsn{\sigma_j \circ \id_{\Delta^d}}
            \leq \sum_{j=0}^k |a_j| \cdot \fsn{\id_{\Delta^d}}
            = \fsn{\id_{\Delta^d}} \cdot \|c\|_1 ,
  \end{align*}
  as desired.
\end{proof}

\begin{cor}\label{cor:gromovforfinchnfsn}
  In particular, because the $\ell^1$-semi-norm is trivial on simply
  connected spaces~\cite[Section~3.1, Theorem~2.4]{vbc, ivanov}, every
  functorial semi-norm on singular homology induced from a
  \emph{finite} functorial semi-norm on the singular chain complex is
  trivial on simply connected spaces.
\end{cor}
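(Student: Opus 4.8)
The plan is to reduce the corollary to the chain-level estimate of Proposition~\ref{prop:chainfunsn} combined with Gromov's vanishing theorem for the $\ell^1$-semi-norm, so that the argument is essentially a one-line consequence of what has already been proved. Let $\fsn{\cdot}$ be a finite functorial semi-norm on the singular chain complex in degree~$d$. By construction the induced functorial semi-norm on singular homology is given, for a topological space~$X$ and a class~$\alpha \in H_d(X;\R)$, by $\fsn{\alpha} = \inf\{\fsn{c} \mid c \in C_d(X;\R) \text{ is a cycle representing } \alpha\}$; I write $\fsn{\cdot}$ for both semi-norms, distinguishing the chain-level and the homology-level one by the type of their argument.

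First I would push the inequality of Proposition~\ref{prop:chainfunsn} from chains to homology. For every cycle~$c$ representing~$\alpha$ that proposition yields $\fsn{c} \le \fsn{\id_{\Delta^d}} \cdot |c|_1$, and since the constant $\fsn{\id_{\Delta^d}}$ depends neither on~$X$ nor on~$\alpha$ nor on~$c$, it may be pulled out of the infimum over all representing cycles. Taking that infimum gives $\fsn{\alpha} \le \fsn{\id_{\Delta^d}} \cdot \|\alpha\|_1$.

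Next, if $X$ is simply connected, then Gromov's theorem that the $\ell^1$-semi-norm is trivial on simply connected spaces~\cite[Section~3.1, Theorem~2.4]{vbc, ivanov} gives $\|\alpha\|_1 = 0$. Combined with the previous inequality this forces $\fsn{\alpha} \le \fsn{\id_{\Delta^d}} \cdot 0 = 0$, and hence $\fsn{\alpha} = 0$, which is exactly the assertion.

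The one point at which the finiteness hypothesis is genuinely used — and the only subtle step — is the product $\fsn{\id_{\Delta^d}} \cdot 0$. Finiteness guarantees $\fsn{\id_{\Delta^d}} < \infty$, so that this product is honestly~$0$; without finiteness the right-hand side of Proposition~\ref{prop:chainfunsn} degenerates to the uninformative $\infty \cdot 0$ and the estimate carries no content. This is precisely why the corollary is restricted to finite chain-level semi-norms, and it is the \emph{crux} of an otherwise immediate argument.
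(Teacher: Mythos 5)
Your proof is correct and follows exactly the route the paper intends: the corollary is stated as an immediate consequence of Proposition~\ref{prop:chainfunsn} together with Gromov's vanishing theorem, which is precisely the argument you spell out (passing the chain-level estimate to homology via the infimum, then using finiteness of $\fsn{\id_{\Delta^d}}$ to avoid the degenerate $\infty \cdot 0$). Your explicit identification of where finiteness enters matches the role that hypothesis plays in the paper.
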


Concerning the converse question ``Which [finite] functorial
semi-norms on singular homology are induced from [finite] functorial
semi-norms on the singular chain complex?'', we prove in the
following:
\begin{itemize}
  \item 
    Every functorial semi-norm on singular homology is induced from
    some (in general infinite) functorial semi-norm on the singular
    chain complex (Proposition~\ref{prop:indfromchains});
  \item 
    There exist finite functorial semi-norms on singular homology that
    are not induced from a \emph{finite} functorial semi-norm on the
    singular chain complex (Theorem~\ref{thm:finnonchainsn}). 

    So, Corollary~\ref{cor:gromovforfinchnfsn} is not strong enough to
    answer Gromov's question
    (Question~\ref{q:gromov}\enumref{enum:finite}) in the positive for
    \emph{all} finite functorial semi-norms.
\end{itemize}

\begin{prop}\label{prop:indfromchains}
  Let $d \in \N$, and let $\fsn{\cdot}$ be a functorial semi-norm on
  singular homology in degree~$d$. Then there is a functorial
  semi-norm~$|\cdot|$ on the singular chain complex in degree~$d$
  inducing~$\fsn{\cdot}$: i.e., for all topological spaces~$X$ and
  all~$\alpha \in H_d(X;\R)$ we have
  \[ \fsn \alpha = \inf \bigl\{ |c| 
                        \bigm|  \text{$c \in C_d(X;\R)$ is a cycle
                                representing~$\alpha$}
                        \bigr\}.
  \]
\end{prop}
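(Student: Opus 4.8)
The plan is to reduce the statement to a routine extension problem and then solve it by the most economical extension possible: assigning the value $\infty$ to every $d$-chain that is not a cycle.

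First I would record that, on the subspace $Z_d(X;\R) \subseteq C_d(X;\R)$ of cycles, the assignment $c \mapsto \fsn{[c]}$ (where $[c]$ denotes the homology class of the cycle $c$) is already a functorial semi-norm. It is well defined because $\fsn{\cdot}$ is constant on each homology class; it inherits homogeneity, the triangle inequality, and vanishing on $0$ directly from $\fsn{\cdot}$; and it is functorial because $C_d(f;\R)$ sends cycles to cycles with $[C_d(f;\R)c] = H_d(f;\R)[c]$, so that $\fsn{[C_d(f;\R)c]} = \fsn{H_d(f;\R)[c]} \le \fsn{[c]}$ by functoriality of $\fsn{\cdot}$ on homology. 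The crucial bookkeeping point is that \emph{any} functorial chain semi-norm $|\cdot|$ whose restriction to cycles agrees with $c \mapsto \fsn{[c]}$ automatically induces $\fsn{\cdot}$: for a fixed $\alpha \in H_d(X;\R)$ the value $|c|$ is the constant $\fsn{\alpha}$ as $c$ ranges over the nonempty set of cycles representing $\alpha$, so the infimum in the statement equals $\fsn{\alpha}$.

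Next I would extend this to all of $C_d(X;\R)$ by
\[ |c| := \begin{cases} \fsn{[c]} & \text{if } \partial c = 0,\\ \infty & \text{otherwise,}\end{cases}\]
and check the semi-norm axioms and functoriality. Vanishing on $0$ and homogeneity are immediate, using that for $a \neq 0$ the chain $a\cdot c$ is a cycle if and only if $c$ is. For the triangle inequality and for functoriality I would split into cases according to which chains in sight are cycles: when all of them are cycles the inequalities reduce to those for $\fsn{\cdot}$ established in the previous step, and in every remaining case the right-hand side of the relevant inequality is $\infty$, so the inequality holds trivially.

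The only point that needs genuine care — and where a naive guess could fail — is that $C_d(f;\R)$ may send a non-cycle to a cycle (its boundary can lie in the kernel of $C_{d-1}(f;\R)$), and likewise a sum of two non-cycles can be a cycle. In each such situation a finite left-hand side is being compared with a right-hand side we have declared to be $\infty$, so functoriality and the triangle inequality survive precisely because the extension is by $\infty$ rather than by $0$. This is the essential observation that makes the construction go through; with it in hand the verification is entirely formal, and the resulting $|\cdot|$ is the desired (in general infinite) functorial semi-norm on the singular chain complex inducing $\fsn{\cdot}$.
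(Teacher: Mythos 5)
Your proposal is correct and is essentially the paper's own proof: the paper constructs the same semi-norm (written there as $i_*p^*\fsn{\cdot}$, i.e.\ the pullback of $\fsn{\cdot}$ along $Z_d(X;\R)\to H_d(X;\R)$, extended by $\infty$ off the cycles) and verifies the axioms and functoriality by exactly the same case analysis you describe. Your closing observation --- that extending by $\infty$ rather than $0$ is what saves functoriality and the triangle inequality when non-cycles map to, or sum to, cycles --- is precisely the point the paper's verification turns on.
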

\begin{proof}
  Let $X$ be a topological space. We denote by~$i \colon Z_d(X;\R)
  \longrightarrow C_d(X;\R)$ and $p \colon Z_d(X;\R) \longrightarrow
  H_d(X;\R)$ the inclusion of the $d$-cycles and the projection
  onto the $d$-th homology group respectively. We
  define a semi-norm~$|\cdot|$ on~$C_d(X;\R)$ by setting
  \[ |\cdot| := i_* p^* \fsn{\cdot},
  \]
  where $i_*$ and $p^*$ are defined as follows:
  \begin{enumerate}
    \item
      \emph{Construction of~$p^* |\cdot|$:}
      Let $p \colon V \longrightarrow U$ be a surjective homomorphism
      of $\R$-vector spaces, and let $|\cdot|$ be a semi-norm
      on~$U$. Then
      \begin{align*}
        p^*|\cdot| \colon V 
          & \longrightarrow [0,\infty] \\
        x & \longmapsto |p(x)|
      \end{align*}
      is a semi-norm on~$V$ (this is a straightforward calculation). 
    \item 
      \emph{Construction of~$i_*|\cdot|$:}
      Let $i \colon U \longrightarrow V$ be the inclusion of a
      subspace of an $\R$-vector space, and let $|\cdot|$ be a
      semi-norm on~$U$. Then 
      \begin{align*}
        i_*|\cdot| \colon V 
          & \longrightarrow [0,\infty] \\
        x & \longmapsto     
            \begin{cases}
              |x|    & \text{if $x \in U$},\\
              \infty & \text{if $x \in V \setminus U$}
            \end{cases}
      \end{align*}
      is a semi-norm on~$V$; clearly, $i_*|0| = |0| = 0$, and
      $i_*|\cdot|$ is compatible with scalar multiplication. Moreover,
      the triangle inequality is satisfied: Let $x$,~$y \in V$. If $x
      \in V \setminus U$ or $y \in V \setminus U$, then $i_*|x| =
      \infty$ or $i_*|y| = \infty$, so that the triangle inequality is
      trivially satisfied. The only remaining case is that $x$,~$y\in
      U$, and in this case the triangle inequality is satisfied,
      because $|\cdot|$ is a semi-norm on~$U$.

      Note that if $U \neq V$, then $i_*|\cdot|$ is infinite.
  \end{enumerate}
  Why is $|\cdot| = i_* p^* \fsn{\cdot}$ functorial? Let $f \colon X
  \longrightarrow Y$ be a continuous map and let $c \in C_d(X;\R)$. If
  $c$ is not a cycle, then $|c| = \infty$, and so $|C_d(f;\R)(c)| \leq
  |c|$. In case $c$ is a cycle, then $C_d(f;\R)(c)$ is a cycle as well
  and thus
  \[   \bigl| C_d(f;\R)(c)\bigr|
     = \fsn[big]{[C_d(f;\R)(c)]}
     = \fsn[big]{H_d(f;\R)[c]}
     \leq \fsn{[c]}
     = |c|
  \]
  because $\fsn{\cdot}$ is functorial.

  Moreover, $|\cdot|$ induces~$\fsn{\cdot}$ on homology because for
  all cycles~$c$ we have $\fsn{[c]} = |c|$ by construction of~$|\cdot|$.
\end{proof}

However, even if the given functorial semi-norm on singular homology is finite, the corresponding functorial semi-norm on the
singular chain complex provided in the proof of
Proposition~\ref{prop:indfromchains} is \emph{not} finite. This is not
merely an artefact of this construction: in the following we give an
example of a finite functorial semi-norm on singular homology that
grows too fast (compared to the $\ell^1$-semi-norm) to be induced from
a finite functorial semi-norm on the singular chain complex.

\begin{defi}[Degree monotonic map]
  A function~$\varphi \colon \R_{\geq 0} \longrightarrow \R_{\geq 0}$
  that is monotonically growing is called \emph{degree monotonic} if
  for all~$x \in \R_{\geq 0}$ and all~$d \in \N$ we have
  \[ \varphi(d \cdot x) \geq d \cdot \varphi(x).
  \]
\end{defi}

\begin{prop}\label{prop:degmoncomp}
  Let $d \in \N$ and let $v \colon \Mfd_d \longrightarrow \R_{\geq 0}$
  be  a finite functorial semi-norm on fundamental classes of oriented
  closed connected $d$-manifolds. If $\varphi \colon \R_{\geq0}
  \longrightarrow \R_{\geq0}$ is a degree monotonic map, then the
  composition 
  \[ \varphi \circ v \colon \Mfd_d \longrightarrow \R_{\geq0}
  \]
  is a finite functorial semi-norm on fundamental classes of oriented
  closed connected $d$-manifolds.
\end{prop}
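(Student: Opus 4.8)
The plan is to verify the two defining properties of a finite functorial $\Mfd_d$-semi-norm (in the sense of Definition~\ref{def:assocfunsn}) for the composition $\varphi \circ v$ separately. Finiteness is immediate: since $v$ is finite and $\varphi$ takes values in $\R_{\geq 0}$, the composition $\varphi \circ v \colon \Mfd_d \longrightarrow \R_{\geq 0}$ is well-defined and finite. It therefore remains to establish the functoriality estimate
\[ |\deg f| \cdot (\varphi \circ v)(N) \leq (\varphi \circ v)(M) \]
for every continuous map $f \colon M \longrightarrow N$ between oriented closed connected $d$-manifolds.

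The heart of the argument is to chain together the two clauses in the definition of degree monotonicity. First I would set $k := |\deg f| \in \N$. The degenerate case $k = 0$ is trivial, since then the left-hand side is~$0$, which is bounded above by~$\varphi(v(M)) \geq 0$. So assume $k \geq 1$. Functoriality of the given semi-norm~$v$ on~$\Mfd_d$ yields $k \cdot v(N) \leq v(M)$. Since $\varphi$ is monotonically growing, this gives $\varphi(v(M)) \geq \varphi(k \cdot v(N))$; and applying the degree inequality of degree monotonicity with~$x = v(N)$ and the natural number~$k$ gives $\varphi(k \cdot v(N)) \geq k \cdot \varphi(v(N))$. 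Combining these two estimates produces
\[ \varphi(v(M)) \geq k \cdot \varphi(v(N)) = |\deg f| \cdot \varphi(v(N)), \]
which is exactly the required inequality.

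I do not expect any genuine obstacle here: the notion of a degree monotonic map was evidently designed so that its two clauses—monotone growth and the inequality $\varphi(k\cdot x) \geq k \cdot \varphi(x)$—are precisely the two facts needed to transport the functoriality of~$v$ across~$\varphi$. The only point meriting a little care is the treatment of the case $|\deg f| = 0$, where degree monotonicity contributes nothing and one simply invokes the non-negativity of~$\varphi$; everything else is a direct two-step estimate.
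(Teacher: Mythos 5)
Your proof is correct and follows essentially the same route as the paper: apply functoriality of~$v$ to get $v(M) \geq |\deg f| \cdot v(N)$, then monotone growth of~$\varphi$ followed by the inequality $\varphi(k \cdot x) \geq k \cdot \varphi(x)$. The only cosmetic difference is that you split off the case $|\deg f| = 0$, which the paper does not need to do since the degree-monotonicity inequality is required for all $d \in \N$, including $d = 0$, so the same two-step chain covers that case automatically.
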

\begin{proof}
  For all continuous maps~$f \colon M \longrightarrow N$ between
  oriented closed connected $d$-manifolds, we have~$v(M) \geq |\deg
  f| \cdot v(N)$, and thus
  \begin{align*}
      \varphi \circ v (M) 
    & \geq \varphi\bigl( |\deg f| \cdot v(N) \bigr)
      \geq |\deg f| \cdot \varphi \circ v (N)
  \end{align*}
  by the degree monotonicity of~$\varphi$.
\end{proof}

\begin{thm}\label{thm:finnonchainsn}
  There are finite functorial semi-norms on singular homology that are
  not induced from a finite functorial semi-norm on the singular chain
  complex.
\end{thm}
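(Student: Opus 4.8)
The plan is to combine the necessary condition from Proposition~\ref{prop:chainfunsn} with a construction that violates it. Recall that Proposition~\ref{prop:chainfunsn} shows that any finite functorial semi-norm on singular homology in degree~$d$ which is induced from a finite functorial semi-norm on the singular chain complex is dominated by a fixed multiple of the $\ell^1$-semi-norm, namely $\fsn{\cdot} \leq \fsn{\id_{\Delta^d}} \cdot \|\cdot\|_1$. Hence it suffices to exhibit, in some degree~$d$, a finite functorial semi-norm~$\fsn{\cdot}$ on singular homology for which the ratio $\fsn{\alpha}/\|\alpha\|_1$ is unbounded as~$\alpha$ ranges over homology classes; such a semi-norm cannot satisfy any inequality $\fsn{\cdot} \leq C \cdot \|\cdot\|_1$, and so, by the contrapositive of Proposition~\ref{prop:chainfunsn}, it is not induced from a finite functorial semi-norm on the singular chain complex.

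To build such a semi-norm I would work in degree~$d = 2$ and start from the simplicial volume $v \colon \Mfd_2 \longrightarrow \R_{\geq 0}$, $M \mapsto \|M\|$. This is finite and, by functoriality of the $\ell^1$-semi-norm together with the resulting degree estimate for fundamental classes, it is a finite functorial semi-norm on fundamental classes of oriented closed connected $2$-manifolds in the sense of Definition~\ref{def:assocfunsn}. Applying Proposition~\ref{prop:degmoncomp} with the degree monotonic map $\varphi(x) = x^2$ (Example~\ref{exa:degmon}), the composite $\tilde v := \varphi \circ v = \|\cdot\|^2$ is again a finite functorial $\Mfd_2$-semi-norm. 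Let $\fsn{\cdot}$ denote the associated semi-norm on~$H_2(\args;\R)$. By Theorem~\ref{thm:genfunsn}~(2) the semi-norm $\fsn{\cdot}$ is finite, and by Theorem~\ref{thm:genfunsn}~(1) we have $\fsn{\fclr M} = \tilde v(M) = \|M\|^2$ for every~$M \in \Mfd_2$.

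Now I would test $\fsn{\cdot}$ against the $\ell^1$-semi-norm on fundamental classes. For an oriented closed connected surface~$M$ we have $\|\fclr M\|_1 = \|M\|$, so
\[ \frac{\fsn[big]{\fclr M}}{\bigl\|\fclr M\bigr\|_1} = \frac{\|M\|^2}{\|M\|} = \|M\|. \]
Taking $M = \Sigma_g$, the oriented surface of genus~$g$, the identity $\|\Sigma_g\| = 4g - 4$ recorded earlier shows that this ratio tends to infinity with~$g$. Hence no constant~$C$ satisfies $\fsn{\cdot} \leq C \cdot \|\cdot\|_1$, and Proposition~\ref{prop:chainfunsn} forbids $\fsn{\cdot}$ from being induced by a finite functorial semi-norm on the singular chain complex.

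I expect the only point requiring care to be the unboundedness input rather than any genuine obstacle: in degree~$2$ it is immediate from $\|\Sigma_g\| = 4g - 4$, while in higher degrees one would instead invoke families of manifolds with unbounded simplicial volume (for example finite covers of increasing degree of a fixed closed hyperbolic manifold, or products of high-genus surfaces), so that the identical argument produces non-chain-induced finite functorial semi-norms in every degree $d \geq 2$.
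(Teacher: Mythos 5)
Your proposal is correct, and its skeleton is the same as the paper's proof: the obstruction is Proposition~\ref{prop:chainfunsn}, the example is the functorial semi-norm associated (via Proposition~\ref{prop:degmoncomp} and Theorem~\ref{thm:genfunsn}) with a superlinearly growing degree-monotonic function composed with simplicial volume, and the contradiction comes from a family of manifolds whose simplicial volume is unbounded. The one genuine difference is the witness family. The paper works in an arbitrary degree~$d \geq 2$: it fixes a closed hyperbolic $d$-manifold~$M$, uses residual finiteness of~$\pi_1(M)$ to produce finite covers~$M_k \to M$ of degree at least~$k$, and concludes $\|M_k\| \geq k \cdot \|M\| \to \infty$. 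You instead fix~$d = 2$ and use the closed formula $\|\Sigma_g\| = 4g - 4$, which makes the unboundedness immediate and bypasses residual finiteness and covering-space arguments entirely; since the theorem only asserts the \emph{existence} of such a semi-norm, this degree-$2$ example suffices, and it is the more elementary argument. (Your closing remark about how to handle higher degrees -- covers of a fixed hyperbolic manifold -- is precisely what the paper does.) The only point to state carefully is that the ratio test requires $\bigl\|\fclr{\Sigma_g}\bigr\|_1 = \|\Sigma_g\| > 0$, so you should restrict to genus~$g \geq 2$; with that caveat your computation $\fsn[big]{\fclr{\Sigma_g}} \big/ \bigl\|\fclr{\Sigma_g}\bigr\|_1 = \|\Sigma_g\| = 4g-4 \to \infty$ is exactly the contradiction with the bound $\fsn{\cdot} \leq \fsn{\id_{\Delta^2}} \cdot \|\cdot\|_1$ of Proposition~\ref{prop:chainfunsn}.
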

\begin{proof}
  Let $\varphi \colon \R_{\geq 0} \longrightarrow \R_{\geq 0}$ be a
  degree monotonic map that grows faster than linearly
  in the sense that $\lim_{x \rightarrow
    \infty} \varphi(x) / x = \infty$; 
  for instance, for every~$a \in \R_{> 1}$ the map
  \begin{align*}
    \R_{\geq 0} & \longrightarrow \R_{\geq 0} \\
    x           & \longmapsto     x^a
  \end{align*}
  has this property. Moreover, let $d \in \N_{\geq
    2}$.  

  We now consider the functorial semi-norm~$\fsn{\cdot}$ on
  singular homology in degree~$d$ associated with the finite
  functorial semi-norm on fundamental classes of oriented closed
  connected $d$-manifolds given by composing~$\varphi$ with the
  simplicial volume (Proposition~\ref{prop:degmoncomp} and
  Theorem~\ref{thm:genfunsn}); notice that $\fsn{\cdot}$ is finite.
  
  \emph{Assume} for a contradiction that $\fsn{\cdot}$ were induced
  from a finite functorial semi-norm. Then in view of
  Proposition~\ref{prop:chainfunsn} we would have
  \begin{align}\tag{$\mathbin{*}$}\label{eq:l1estimate} 
    \fsn{\cdot} \leq \fsn{\id_{\Delta^d}} \cdot \| \cdot \|_1.
  \end{align}
  However, we now show that $\fsn{\cdot}$ ``grows too fast'' to be
  able to satisfy this estimate. To see this consider the properties
  of hyperbolic manifolds more closely: Let $M$ be an oriented closed
  connected hyperbolic \mbox{$d$-mani}\-fold.  Then the fundamental
  group~$\pi_1(M)$ of~$M$ is residually
  finite~\cite[p.~542]{sambusetti}; so for any~$k \in \N$ there is a
  subgroup~$\Gamma_k \subset \pi_1(M)$ satisfying
  \[ k \leq \bigl[ \pi_1(M) : \Gamma_k \bigr] < \infty.
  \]
  For~$k \in \N$ we let $p_k \colon M_k \longrightarrow M$ denote the
  covering associated with the inclusion~$\Gamma_k \subset \pi_1(M)$;
  hence, $M_k$ also is an oriented closed connected (hyperbolic)
  $d$-manifold and
  \[ |\deg p_k| = \bigl[\pi_1(M) : \Gamma_k \bigr] \geq k. 
  \]
  Because $M$ is hyperbolic, the simplicial volume~$\| M\|$ is
  non-zero~\cite[Section~0.3, Theorem~6.2]{vbc,thurston}; thus,
  $\|M_k\| \geq k \cdot \|M\|$ tends to~$\infty$ for~$k \rightarrow
  \infty$. By definition, $\varphi$ grows faster than linearly and so
  \[ \frac{\fsn[big]{\fclr{M_k}}}
          {\bigl\|\fclr{M_k}\bigr\|_1} 
   = \frac{\varphi(\| M_k\|)}{\|M_k\|}
  \]
  tends to~$\infty$ for~$k \rightarrow \infty$, contradicting the
  estimate in Equation~\eqref{eq:l1estimate}. Therefore, the finite
  functorial semi-norm~$\fsn{\cdot}$ on singular homology  is not
  induced from a finite functorial semi-norm on the singular chain
  complex.
\end{proof}

\begin{question}
  In light of the example constructed in the proof of
  Theorem~\ref{thm:finnonchainsn}, it is natural to ask for a
  reasonable notion of equivalence of functorial semi-norms on
  singular homology or for a notion of domination of one functorial
  semi-norm by another. Is the $\ell^1$-semi-norm on singular homology
  ``maximal'' among finite functorial semi-norms on singular homology
  with respect to such a notion? (This should also be compared with
  Proposition~\ref{prop:equivstronglyinflexible}.)
\end{question}

\section{(In)flexible manifolds} \label{sec:(in)flexiblemanifolds}

The constructions of interesting functorial semi-norms 
in Section~\ref{subsec:nontrivialsimplyconn} below require as input
simply connected manifolds that are inflexible; recall that an
oriented closed connected manifold~$M$ is \emph{inflexible} if it
admits only self-maps of degree~$0$,~$1$ or~$-1$, i.e.,
$\deg(M,M) \subset \{-1, 0, 1\}$.

\begin{rem} 
  Looking at iterated compositions shows that an oriented closed
  connected manifold~$M$ is flexible if and only if $|\deg(M, M)| =
  \infty$.  Conversely, the manifold $M$ is inflexible if and only if
  $\deg(M, M)$ is finite.
\end{rem}

\begin{rem} 
  If a manifold is flexible, then -- by functoriality -- its
  simplicial volume is zero. In particular, oriented closed connected
  hyperbolic manifolds are inflexible, as they have non-zero
  simplicial volume. However, for simply connected manifolds the
  simplicial volume is zero and hence the simplicial volume cannot
  serve as an obstruction to flexibilty in this case.
\end{rem}

In this section we show how rational homotopy theory and surgery allow
one to construct examples of simply connected inflexible manifolds,
building upon examples of Arkowitz and Lupton~\cite{arkowitzlupton}
(Sections~\ref{subsec:strategy} and~\ref{subsec:inflexiblemanifolds}).
We briefly discuss strongly inflexible manifolds in
Section~\ref{subsec:stronglyinflexible}. Finally, in
Section~\ref{subsec:flexiblemanifolds}, we discuss the class of simply
connected flexible manifolds from the viewpoint of rational homotopy
theory.  To make this section more readable we have moved most of the
calculations with differential graded algebras and the proof of
inheritance properties of simply connected inflexible manifolds to the
appendices Section~\ref{app:I} and~\ref{app:II}.

\subsection{(In)Flexibility and rational homotopy theory} \label{subsec:strategy}

We start by giving an overview of the construction of simply connected
inflexible manifolds and introducing key notations and definitions.

Rational homotopy theory provides the rationalisation
functor~$\args_\Q$ on the category of simply connected spaces and an
equivalence of categories between the category of simply connected
rational spaces and the category of certain differential graded
algebras, the so-called minimal models. For the basic definitions in
rational homotopy theory, we refer to the book by F\'elix, Halperin
and Thomas~\cite{felixhalperinthomas}.

More concretely, if $M$ is an oriented closed simply connected manifold, then
the associated minimal model~$A_M$ is a differential graded algebra
over~$\Q$ whose cohomology coincides with the rational cohomology
of~$M$; in particular, $A_M$ has a cohomological fundamental
class~$[A_M]$, namely the cohomology class of~$H^*(A_M) \cong
H^*(M;\Q)$ dual to the fundamental class~$\fclq M$ of~$M$.  

Any self-map~$f \colon M \longrightarrow M$ induces a corresponding
dga endomorphism $A_f \colon A_M \longrightarrow A_M$; using the
cohomological fundamental class~$[A_M]$ of~$A_M$ we can associate a mapping
degree to~$A_f$, and this mapping degree coincides with~$\deg
(f)$. In particular, if $A_M$ is ``inflexible'', as defined in Definition \ref{defi:inflexiblealgebra/space} below,  then so is~$M$.

Hence, it suffices to find differential graded algebras that are
minimal models of simply connected manifolds and whose cohomological
fundamental class is inflexible; notice that the latter condition is algebraic by defintion and moreover that Theorem \ref{thm:bargesullivan} below entails that this is also true of the former condition.

We now give a precise definition of inflexibility and duality 
in the world of differential graded algebras:

\begin{defi}[(In)flexible (co)homology classes]\label{def:inflexibleclass}
  \hfil
  \begin{itemize}
    \item A homology class~$\alpha \in H_*(X;\Q)$ of a topological
      space~$X$ is \emph{flexible} if there is a continuous map~$f
      \colon X \longrightarrow X$ such that
      \[ H_*(f;\Q)(\alpha) = d \cdot \alpha 
      \]
      for some~$d \in \Q \setminus \{-1,0,1\}$.  A homology class is
      called \emph{inflexible} if it is not flexible.

      (In particular, an oriented closed connected manifold is
      inflexible if and only if its fundamental class is inflexible). 
    \item
      A cohomology class~$\alpha \in H^*(A)$ of a differential graded
      algebra~$A$ is \emph{flexible} if there is a dga endomorphism~$f
      \colon A \longrightarrow A$ such that
      \[ H^*(f) (\alpha) = d \cdot \alpha
      \]
      for some~$d \in \Q \setminus \{-1,0,1\}$. A cohomology class is
      \emph{inflexible} if it is not flexible.
  \end{itemize}
\end{defi}

\begin{defi}[Poincar\'e differential graded algebra]\label{def:poincaredga}
  Let $n \in \N$. A \emph{Poincar\'{e} differential graded algebra of
    formal dimension $n$} is a simply connected differential graded 
  algebra~$A$ together with a cohomology class~$[A] \in H^n(A)$, the
  \emph{fundamental class}, satisfying the following conditions:
  \begin{enumerate}
    \item For all~$j \in \N_{>n}$ we have~$H^j(A) = 0$.
    \item The map
      \begin{align*}
        \Q & \longrightarrow H^n(A) \\
        a & \longmapsto a \cdot [A]
      \end{align*}
      is an isomorphism. 
    \item For all~$j\in \{0,\dots,n\}$, the pairing~$H^j(A) \times
      H^{n-j}(A) \longrightarrow H^n(A) \cong \Q$ (where we use the
      isomorphism~$H^n(A) \cong \Q$ of the previous item) given by
      multiplication identifies $H^j(A)$ with~$\Hom_\Q(H^{n-j}(A), \Q)$.
  \end{enumerate}
\end{defi}

\begin{defi}[Inflexible Poincar\'e algebra/space] \label{defi:inflexiblealgebra/space}
  \hfil
  \begin{itemize}
    \item A Poincar\-e differential graded algebra~$(A,[A])$ is
      \emph{inflexible}, if its fundamental class~$[A]$ is inflexible
      in the sense of Definition~\ref{def:inflexibleclass}.
    \item A $\Q$-Poincar\'e space~$(X,[X])$ (see
      Remark~\ref{rem:poincarespace} for a definition) is
      \emph{inflexible}, if its fundamental class~$[X]$ is inflexible
      in the sense of Definition~\ref{def:inflexibleclass}. 
  \end{itemize}
\end{defi}


\subsection{Simply connected inflexible manifolds} \label{subsec:inflexiblemanifolds}

Arkowitz and Lupton gave examples of differential graded algebras that
admit only finitely many homotopy classes of dga
endomorphisms~\cite[Examples~5.1 and~5.2]{arkowitzlupton}. Moreover,
they showed how to prove that these differential graded algebras are minimal models
of simply connected closed manifolds. In particular, these simply
connected manifolds are inflexible. 

In Section~\ref{app:I} we review their construction, and give two
more examples of differential graded algebras with inflexible
fundamental class:

\begin{thm}
  There are inflexible Poincar\'e differential graded
  algebras~$(A_1,[A_1])$, $(A_2,[A_2])$, $(A_3,[A_3])$, $(A_4,[A_4])$ of
  formal dimensions~$64$,~$108$,~$208$, and $228$ respectively.
\end{thm}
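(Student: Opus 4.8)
The plan is to construct each of the four differential graded algebras explicitly as a minimal Sullivan algebra $(\Lambda V, d)$ on a finite-dimensional graded vector space $V$, and then verify three things about each: that it is a Poincaré dga of the claimed formal dimension, that its fundamental class is inflexible, and that it is realisable as the minimal model of a genuine simply connected closed manifold. The examples $A_3$ (dimension $208$) and $A_4$ (dimension $228$) are exactly those of Arkowitz and Lupton, so for these I would simply recall their construction and cite the verification; the novel content is the construction of $A_1$ (dimension $64$) and $A_2$ (dimension $108$). Following the Arkowitz–Lupton template, I would take each algebra to be a pure Sullivan algebra with a small number of even-degree generators $x_1, \dots, x_m$ (on which $d$ vanishes) and an equal number of odd-degree generators $y_1, \dots, y_m$ whose differentials $dy_i$ are prescribed polynomials in the $x_j$ chosen so that the cohomology is finite-dimensional and satisfies Poincaré duality.

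**First I would** pin down the explicit generators and differentials for $A_1$ and $A_2$, choosing the degrees and the polynomial relations $dy_i$ so that the resulting algebra is elliptic (finite-dimensional rational homotopy and cohomology). The Poincaré-duality property — conditions (1)--(3) of Definition~\ref{def:poincaredga} — then follows from the standard fact that a pure elliptic Sullivan algebra with formal dimension $n = \sum \deg y_i - \sum \deg x_i$ and with $\dim H^n = 1$ automatically has a duality pairing; I would compute $H^*(A)$ directly from the Koszul-type differential to confirm these. The degrees should be arranged so that the total formal dimension comes out to $64$ and $108$ respectively. This bookkeeping is routine but must be done carefully, and I would defer the full computation to Section~\ref{app:I} as the paper indicates.

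**The main obstacle will be** proving inflexibility of the fundamental class, i.e.\ that every dga endomorphism $f \colon A \longrightarrow A$ satisfies $H^*(f)([A]) = d \cdot [A]$ only for $d \in \{-1,0,1\}$. The strategy, again following Arkowitz and Lupton, is to analyse an arbitrary endomorphism degree-by-degree: on each even generator $x_i$ the map $f$ acts by a scalar $\lambda_i$ (up to lower filtration terms), and compatibility of $f$ with the differentials $dy_i$ forces a system of polynomial equations among the $\lambda_i$. The crucial point is to choose the degrees of the generators \emph{distinct and multiplicatively rigid enough} that these equations admit only finitely many solutions, and moreover so that the induced scalar on the top class $[A]$ — which is a monomial in the $\lambda_i$ — is forced to lie in $\{-1,0,1\}$. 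Getting the degrees to simultaneously satisfy the dimension constraint and this rigidity is the delicate design problem; I would verify the resulting finite list of $\lambda_i$-solutions by hand for each of $A_1$ and $A_2$.

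**Finally I would** upgrade the algebraic examples to manifolds. Here the key input is the Barge–Sullivan realisation result (Theorem~\ref{thm:bargesullivan}, referenced in the excerpt), which gives a criterion under which a simply connected Poincaré dga over $\Q$ is the minimal model of an oriented closed simply connected smooth manifold; this criterion involves the existence of a suitable Poincaré-duality structure together with the vanishing of appropriate surgery-theoretic obstructions (realisability of the rational Pontryagin classes and the intersection form). Since inflexibility of the manifold's fundamental class is equivalent to inflexibility of the algebraic fundamental class — as explained in Section~\ref{subsec:strategy}, a self-map $f$ of $M$ induces $A_f$ with matching degree — the manifolds obtained this way are automatically inflexible. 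Thus the whole proof reduces to the explicit algebra construction plus an appeal to Theorem~\ref{thm:bargesullivan}, and I expect the endomorphism analysis to be the part demanding the most genuine ingenuity.
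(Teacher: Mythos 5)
Your overall architecture (explicit finitely generated minimal Sullivan algebras, Poincar\'e duality via ellipticity, then a coefficient-comparison analysis of endomorphisms) matches the paper's, but your design pattern contains a fatal flaw: you propose to take each $A_j$ to be a \emph{pure} Sullivan algebra, with even generators killed by $d$ and an equal number of odd generators $y_i$ whose differentials are polynomials in the even generators. No such algebra can be inflexible. For a pure algebra the grading endomorphism defined by $x \mapsto \lambda^{|x|}\cdot x$ on even generators and $y \mapsto \lambda^{|y|+1}\cdot y$ on odd generators commutes with $d$ (each monomial of $dy$ is a product of even generators of total degree $|y|+1$); moreover $d$ lowers the word length in odd generators by exactly one, so cohomology splits along that grading, and the one-dimensional top cohomology $H^n(A)$ lies in a single word-length summand $A_{[k]}$. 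The fundamental class is therefore scaled by $\lambda^{n+k}$, and taking $\lambda = 2$ exhibits flexibility. This is precisely the mechanism of Proposition~\ref{prop:pureflexible} of the paper, where purity is an \emph{obstruction} to inflexibility rather than a route to it (cf.\ Corollary~\ref{cor:fsnvanishpure}). In the same vein, the Arkowitz--Lupton examples you cite as your template are not pure, so the template you describe is not theirs. (A smaller slip: the formal dimension of an elliptic algebra is $\sum_i |y_i| - \sum_j (|x_j|-1)$, not $\sum_i |y_i| - \sum_j |x_j|$.)

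The paper's actual design (Section~\ref{subsec:dgadesign}) uses two even generators $x_1, x_2$ and \emph{four} odd generators $y_1, y_2, y_3, z$, with $dy_1 = x_1^3x_2$, $dy_2 = x_1^2x_2^2$, $dy_3 = x_1x_2^3$, and --- crucially --- a non-pure differential on the last generator, $dz = z' + x_1^{k_1} + x_2^{k_2}$, where $z'$ is a combination of terms of the form (monomial in $x_1,x_2$)$\cdot y_iy_j$ chosen so that $d(y_1y_2y_3)$ equals $x_1^k\cdot z'$ or $x_2^k \cdot z'$. It is the coexistence in $dz$ of the quadratic-in-odd part $z'$ with the two pure powers $x_1^{k_1}$, $x_2^{k_2}$ that creates rigidity: in the comparison of $f \circ d(z)$ with $d \circ f(z)$ for an arbitrary endomorphism $f$ (Proposition~\ref{prop:dgainflexible}) one extracts, for $A_1$, the equations $\alpha_1^{18} + \alpha_{2,1}^9 = \gamma = \alpha_2^9$ and $\gamma = \alpha_2^7\alpha_1^5$, which force the scalar $\alpha_2$ governing the fundamental class $[x_2]^{16}$ to lie in $\{0, 1\}$; in your pure setting these equations degenerate and admit the scaling solutions above. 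Finally, note that your last step (realization by manifolds via Theorem~\ref{thm:bargesullivan}) is not part of this statement at all: the theorem under discussion is purely algebraic, and manifold realization is the separate Theorem~\ref{thm:realisationbymfds}, which in addition requires the Witt-index computation of Proposition~\ref{prop:Wittindex}.
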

\begin{proof}
  This is proved in Section~\ref{app:I}
  (Corollary~\ref{cor:poincaredga} and
  Proposition~\ref{prop:dgainflexible}), where also the choice of
  fundamental class is specified (Proposition~\ref{prop:dgafcl}).
\end{proof}

In the following, we focus on the realisability of these Poincar\'e
differential graded algebras by simply connected manifolds (for
simplicity, we consider only the case of trivial total Pontryagin class):

\begin{defi}[Realisability by manifolds]\label{def:realmfd}
  Suppose $(A,[A])$ is a Poincar\'e differential graded algebra of
  formal dimension~$n$.  We then write~$\realmfd {A,[A]}$ for the
  class of all oriented closed simply connected $n$-manifolds~$M$ that
  have trivial total Pontryagin class and that satisfy
  \[ (A_M,[A_M]) \cong (A,[A]). \]
\end{defi}

\begin{thm}[Simply connected inflexible manifolds]\label{thm:realisationbymfds}
  For the above Poincar\'e dgas $(A_1, [A_1]),
  \dots, (A_4,[A_4])$ the classes~$\realmfd{A_1,[A_1]}, \dots,
  \realmfd{A_4,[A_4]}$ are non-empty.  In particular, there are oriented closed simply
  connected inflexible manifolds of
  dimension~$64,\ 108,\ 208,\ 228$ respectively.
\end{thm}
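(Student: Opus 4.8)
The plan is to deduce non-emptiness of the classes $\realmfd{A_i,[A_i]}$ from the Barge--Sullivan realization theorem (Theorem~\ref{thm:bargesullivan}), which is exactly the device that turns the algebraic datum of a Poincar\'e differential graded algebra into a smooth manifold carrying it as minimal model. Concretely, I would apply Theorem~\ref{thm:bargesullivan} to each pair $(A_i,[A_i])$ in turn. By construction each $A_i$ is a simply connected Poincar\'e differential graded algebra (so $H^*(A_i)$ is a simply connected Poincar\'e duality algebra over~$\Q$) of formal dimension at least~$5$, so the structural hypotheses of the realization theorem are satisfied; what remains is to check the characteristic-class side condition that controls which total Pontryagin class may be prescribed, here the \emph{trivial} one demanded by Definition~\ref{def:realmfd}.

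The crux, and the step I expect to be the main obstacle, is this side condition. All four formal dimensions $64,108,208,228$ are divisible by~$4$, so for $n_i = 4k$ Theorem~\ref{thm:bargesullivan} imposes a signature constraint: a simply connected Poincar\'e duality algebra of formal dimension $4k$ is realized by a closed manifold with prescribed total Pontryagin class $p$ exactly when the signature of the cup-product form on $H^{2k}(A_i)$ equals the characteristic number $\langle L_k(p),[A_i]\rangle$ cut out of $p$ by Hirzebruch's $L$-genus. Since we insist on trivial total Pontryagin class, the right-hand side vanishes, and I must verify that the signature of each $(A_i,[A_i])$ is zero. I would establish this by reading off the middle-dimensional cohomology $H^{n_i/2}(A_i)$ from the explicit minimal models of Section~\ref{app:I} and checking that the cup-product form there has signature zero (indeed I expect $H^{n_i/2}(A_i)$ to vanish outright for these generator degrees, making the claim immediate). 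This is the one place where the specific shape of the $A_i$ is genuinely used, and where the deferred computation is concentrated.

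Granting the signature check, Theorem~\ref{thm:bargesullivan} then yields, for each $i$, an oriented closed simply connected smooth $n_i$-manifold $M_i$ with trivial total Pontryagin class together with an isomorphism $(A_{M_i},[A_{M_i}]) \cong (A_i,[A_i])$ of Poincar\'e differential graded algebras matching fundamental classes, the residual sign in the fundamental class being fixed by the choice of orientation on $M_i$. By Definition~\ref{def:realmfd} this is precisely the statement that $\realmfd{A_i,[A_i]} \neq \emptyset$, proving the first assertion. For the second assertion, recall from Section~\ref{subsec:strategy} that any self-map $f \colon M_i \longrightarrow M_i$ induces a dga endomorphism of $A_{M_i} \cong A_i$ whose mapping degree equals $\deg f$; as $[A_i]$ is inflexible no such endomorphism can scale the fundamental class by a factor outside $\{-1,0,1\}$, so $\deg(M_i,M_i) \subset \{-1,0,1\}$. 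Hence each $M_i$ is an oriented closed simply connected inflexible manifold of dimension $64,108,208,228$ respectively.
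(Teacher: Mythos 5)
Your overall strategy---invoke the Barge--Sullivan realisation theorem to produce, for each $(A_j,[A_j])$, a closed simply connected smooth manifold with trivial total Pontryagin class realising it, and then transfer inflexibility from the dga to the manifold---is the same as the paper's (the paper interposes one formal step you elide: Theorem~\ref{thm:bargesullivan} is stated for rational $\Q$-Poincar\'e \emph{spaces}, so it first realises each dga by such a space via the equivalence between minimal Sullivan algebras and rational spaces, Proposition~\ref{prop:realisabilitypoincare}). However, there is a genuine gap at exactly the point you identify as the crux. You state the realisation theorem as imposing \emph{only} the signature condition $\sign(X,[X]) = \langle L(p_*),[X]\rangle$, but Theorem~\ref{thm:bargesullivan} has a second, independent hypothesis: the Witt index $\tau_{[X]} \in W_0(\Q)$ of the middle-dimensional cup-product form must lie in the image of $W_0(\Z) \longrightarrow W_0(\Q)$. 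This is strictly stronger than vanishing of the signature: over $\Q$ a form of signature zero need not be Witt-equivalent to an integral form (for a prime~$p$, the form $\langle p \rangle \oplus \langle -1 \rangle$ has signature zero but non-trivial second residue at~$p$, hence does not lie in the image of~$W_0(\Z)$). So checking that the signature vanishes does not entitle you to apply the theorem.

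Second, your fallback expectation that $H^{n_j/2}(A_j)$ should ``vanish outright'' is false for three of the four examples: the middle cohomology is $4$-dimensional for~$A_1$ (with basis $[x_2w]$, $[x_1^2w]$, $[x_1^{16}]$, $[x_2^8]$, see Proposition~\ref{prop:lambdaA1}) and $2$-dimensional for~$A_2$ and~$A_3$; only~$A_4$ has trivial middle cohomology. This is precisely where the paper's deferred computation is concentrated: it computes the intersection forms explicitly and exhibits a Lagrangian in each non-trivial case (for~$A_1$, spanned by $[x_2w]$ and $[x_1^2w]$), so that each form is metabolic and hence $\tau_{[X_j]} = 0 \in W_0(\Q)$ (Proposition~\ref{prop:Wittindex}). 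Witt-triviality delivers \emph{both} hypotheses of Theorem~\ref{thm:bargesullivan} at once (taking $p_* = 1$, so that $\langle L(p_*),[X_j]\rangle = 0$), and this is the step missing from your argument. Your concluding transfer of inflexibility---a self-map of $M_j$ induces a dga endomorphism of $A_{M_j} \cong A_j$ of the same degree, so inflexibility of $[A_j]$ forces $\deg(M_j,M_j) \subset \{-1,0,1\}$---is correct and agrees with the paper.
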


We now assemble the statements we need to prove Theorem \ref{thm:realisationbymfds}.  As first step, we show that the differential graded algebras~$A_1,
\dots, A_4$ are the corresponding dgas of rational $\Q$-Poincar\'e
spaces:

\begin{prop}[Realisibility by $\Q$-Poincar\'e spaces]
  \label{prop:realisabilitypoincare}
  For the above Poincar\'e dgas $(A_1,[A_1]), \dots, (A_4, [A_4])$ there are
  corresponding simply connected rational $\Q$-Poincar\'e spaces $(X_1,[X_1]),
  \dots, (X_4,[X_4])$ respectively realising these dgas as their
  minimal models such that the cohomology classes corresponding to 
  the fundamental classes~$[A_j]$ are dual to the fundamental classes~$[X_j]$;
  these spaces~$X_1,\dots, X_4$ are unique up to rational homotopy
  equivalence, and they have formal dimension
  \[ 64,\ 108,\ 208,\ 228 
  \]
  respectively.
\end{prop}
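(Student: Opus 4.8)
The plan is to combine Sullivan's realisation theorem from rational homotopy theory with the algebraic Poincaré duality already built into the dgas~$A_j$. First I would invoke the fundamental theorem of rational homotopy theory~\cite{felixhalperinthomas}: since each~$A_j$ is a simply connected minimal Sullivan algebra of finite type (it is a minimal model with finitely many generators), there is a simply connected rational \CW-complex~$X_j$, unique up to rational homotopy equivalence, whose minimal model is~$A_j$, and the realisation comes equipped with a natural isomorphism of graded algebras $H^*(X_j;\Q) \cong H^*(A_j)$. This at once gives the existence of the spaces, their uniqueness up to rational homotopy equivalence, and --- reading off the top non-vanishing cohomology of the~$A_j$ from the previous theorem --- the claimed formal dimensions $64,\ 108,\ 208,\ 228$.

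It then remains to equip each~$X_j$ with a fundamental class making it a $\Q$-Poincar\'e space dual to~$[A_j]$. Writing $n_j$ for the formal dimension, the Poincar\'e conditions on $(A_j,[A_j])$ (Definition~\ref{def:poincaredga}) say that $H^k(A_j) = 0$ for $k > n_j$, that $H^{n_j}(A_j) \cong \Q$ is spanned by~$[A_j]$, and that cup product induces a perfect pairing $H^k(A_j) \times H^{n_j-k}(A_j) \to H^{n_j}(A_j) \cong \Q$. Transporting these statements along the cohomology isomorphism, I would define $[X_j] \in H_{n_j}(X_j;\Q)$ via the universal coefficient theorem over~$\Q$ as the homology class dual to the cohomology class corresponding to~$[A_j]$, so that $[X_j]$ is by construction dual to~$[A_j]$ as required.

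To confirm that $(X_j,[X_j])$ is a $\Q$-Poincar\'e space I would check that $\args \cap [X_j] \colon H^*(X_j;\Q) \to H_{n_j - *}(X_j;\Q)$ is an isomorphism. All groups involved are finite-dimensional, and by perfectness of the cup pairing together with the universal coefficient theorem we have $\dim_\Q H^k(X_j;\Q) = \dim_\Q H_{n_j-k}(X_j;\Q)$, so it suffices to prove injectivity. Here I would use the compatibility $\langle \beta, \alpha \cap [X_j]\rangle = \langle \beta \cup \alpha, [X_j]\rangle$ of the cap and cup products: if $\alpha \cap [X_j] = 0$, then $\langle \beta \cup \alpha, [X_j]\rangle = 0$ for all~$\beta \in H^{n_j - k}(X_j;\Q)$, and perfectness of the cup-product pairing forces $\alpha = 0$.

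The main obstacle I anticipate is this second half: translating the purely algebraic duality of the cohomology ring of~$A_j$ into the geometric cap-product duality demanded by the definition of a $\Q$-Poincar\'e space, while keeping the dualisation bookkeeping straight so that the resulting~$[X_j]$ is genuinely dual to~$[A_j]$. The cup--cap compatibility and the universal coefficient theorem over a field make the argument essentially formal, but care is needed to ensure that the isomorphism of the dga transported to~$X_j$ is exactly the Poincar\'e duality map of the space rather than merely an abstract isomorphism in the correct degrees.
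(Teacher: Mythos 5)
Your proposal is correct and takes essentially the same route as the paper: both obtain existence and uniqueness of the rational spaces~$X_j$ from the Sullivan correspondence between simply connected minimal Sullivan algebras of finite type and rational spaces~\cite[Chapter~17]{felixhalperinthomas}, with the $\Q$-Poincar\'e structure transported from the dga. The only differences are in detail: you verify the cap-product duality condition explicitly (via the universal coefficient theorem and cup--cap compatibility), a step the paper simply asserts as part of the correspondence, and you read the formal dimensions off the preceding theorem, whereas the paper recomputes them from the dimension formula for elliptic dgas~\cite[Proposition~38.3]{felixhalperinthomas}, namely $|y_1|+|y_2|+|y_3|+|z|-(|x_1|-1)-(|x_2|-1)$.
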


\begin{proof}
  Because the dgas~$A_1, \dots, A_4$ are Poincar\'e, the
  correspondence between rational spaces and minimal Sullivan
  algebras~\cite[Chapter~17]{felixhalperinthomas} shows that up to rational homotopy equivalence there is a unique simply connected rational space that is a $\Q$-Poincar\'e space whose minimal model
  is~$A_1$, $A_2$, $A_3$ or~$A_4$ respectively, and whose fundamental
  class corresponds to the fundamental class of the respective dga.

  Moreover, there is a formula expressing the formal dimension in
  terms of the degrees of the generators of an elliptic
  dga~\cite[Proposition~38.3]{felixhalperinthomas}: The generators for
  our examples along with their degrees are given in Section
  \ref{subsec:dgadesign} and the calculation boils down to the formal
  dimension
  \[ |y_1| + |y_2| + |y_3| + |z|  
     - \bigl(|x_1| - 1\bigr) 
     - \bigl(|x_2| - 1\bigr), 
  \]
  and hence to the formal dimensions~$64$, $108$,
  $208$, and $228$ respectively.
\end{proof}

\begin{cor}[Inflexible $\Q$-Poincar\'e spaces]\label{cor:inflexiblepoincarespaces}
  In particular, the simply connected rational $\Q$-Poincar\'e
  spaces~$(X_1,[X_1]), \dots, (X_4,[X_4])$
  from Proposition~\ref{prop:realisabilitypoincare} are inflexible.
\end{cor}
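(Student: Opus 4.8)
The plan is to transfer the inflexibility of the Poincar\'e differential graded algebras $(A_1,[A_1]),\dots,(A_4,[A_4])$, established in the theorem preceding Theorem~\ref{thm:realisationbymfds}, to the rational $\Q$-Poincar\'e spaces $(X_1,[X_1]),\dots,(X_4,[X_4])$ by means of the equivalence of categories of rational homotopy theory that underlies Proposition~\ref{prop:realisabilitypoincare}. Fixing $j \in \{1,2,3,4\}$, I argue by contraposition: assuming that $[X_j]$ is flexible, I produce a dga endomorphism of $A_j$ witnessing flexibility of $[A_j]$, which contradicts the inflexibility of the dga.

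First I would recall the key correspondence~\cite[Chapter~17]{felixhalperinthomas}: since $X_j$ is simply connected of finite type and $A_j$ is its minimal Sullivan model, every continuous self-map $f \colon X_j \longrightarrow X_j$ induces, contravariantly and up to homotopy, a dga endomorphism $A_f \colon A_j \longrightarrow A_j$, and under the identification $H^*(A_j) \cong H^*(X_j;\Q)$ the induced map $H^*(A_f)$ coincides with $H^*(f;\Q)$. This is exactly the construction $f \mapsto A_f$ described on the manifold level at the start of Section~\ref{subsec:strategy}, now carried out for the space $X_j$ in place of a manifold.

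The second step matches the scalar by which such an $f$ scales the fundamental class, on homology and on cohomology. Suppose $f \colon X_j \longrightarrow X_j$ witnesses flexibility of $[X_j]$, so $H_*(f;\Q)([X_j]) = d \cdot [X_j]$ for some $d \in \Q \setminus \{-1,0,1\}$. By Proposition~\ref{prop:realisabilitypoincare} the cohomology class corresponding to $[A_j]$ is Kronecker dual to $[X_j]$, that is $\langle [A_j], [X_j] \rangle = 1$. Naturality of the Kronecker pairing then gives
\[
  \bigl\langle H^*(A_f)([A_j]),\, [X_j] \bigr\rangle
  = \bigl\langle [A_j],\, H_*(f;\Q)([X_j]) \bigr\rangle
  = d \cdot \bigl\langle [A_j],\, [X_j] \bigr\rangle
  = d .
\]
Since $H^*(A_f)([A_j])$ lies in the one-dimensional top cohomology $H^{n}(A_j) = \Q \cdot [A_j]$, where $n$ is the formal dimension of $A_j$ (condition~(2) of Definition~\ref{def:poincaredga}), we may write $H^*(A_f)([A_j]) = c \cdot [A_j]$; pairing with $[X_j]$ forces $c = d$, so $H^*(A_f)([A_j]) = d \cdot [A_j]$.

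The conclusion is then immediate: the dga endomorphism $A_f$ scales $[A_j]$ by $d \in \Q \setminus \{-1,0,1\}$, so $[A_j]$ is flexible in the sense of Definition~\ref{def:inflexibleclass}, contradicting the inflexibility of $(A_j,[A_j])$; hence no such $f$ exists and each $(X_j,[X_j])$ is inflexible. I expect the main obstacle to be the compatibility asserted in the second paragraph---namely that passing from a continuous self-map of the rational space to a dga endomorphism of its minimal model is a genuine (contravariant) inverse, up to homotopy, of the realisation functor of Proposition~\ref{prop:realisabilitypoincare} and that it respects the cohomology pairing. This is precisely what the F\'elix--Halperin--Thomas equivalence of categories provides for simply connected spaces of finite type, so the remaining work is bookkeeping with duality rather than genuinely new input.
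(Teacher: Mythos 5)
Your proof is correct and takes essentially the same route as the paper's: assume $X_j$ is flexible, pass via the minimal-model correspondence to an induced dga endomorphism of $A_j$ scaling $[A_j]$ by the same factor $d \notin \{-1,0,1\}$, and contradict Proposition~\ref{prop:dgainflexible}. The only difference is that you make explicit the Kronecker-duality bookkeeping showing the homological degree of $f$ equals the cohomological scaling factor of $A_f$, which the paper compresses into the phrase ``because $A_j$ is a minimal model of~$X_j$.''
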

\begin{proof}
  Let $j \in \{1,\dots,4\}$. \emph{Assume} for a contradiction that
  $X_j$ is flexible. Then there is a continuous map~$f \colon X_j
  \longrightarrow X_j$ of degree~$d \notin \{-1,0,1\}$. The map~$f$
  induces a dga morphism~$A_j \longrightarrow A_j$ of degree $d$,
  because $A_j$ is a minimal model of~$X_j$.  However, this
  contradicts inflexibility of the dga~$A_j$ established in
  Proposition~\ref{prop:dgainflexible}.
\end{proof}

It now remains to show that the rational $\Q$-Poincar\'e spaces of
Corollary~\ref{cor:inflexiblepoincarespaces} can be realised by simply
connected manifolds. To this end, we apply a foundational theorem of
Barge~\cite{barge} and Sullivan~\cite{sullivan} (a special case is
Theorem~\ref{thm:bargesullivan} below).  This theorem gives necessary
and sufficient conditions for a rational $\Q$-Poincar\'{e} space $X$
to be realised by a manifold with prescribed rational Pontryagin
classes; moreover the conditions are formulated using only the
rational cohomology ring of $X$.  Before stating the theorem we recall
some basic terminology:

Let $\lambda \colon H \otimes H \longrightarrow \Q$ be a non-singular
symmetric bilinear form over a finite dimensional $\Q$-vector
space~$H$.  Recall that a \emph{Lagrangian} for $(H, \lambda)$ is a
subspace $L \subset H$ such that $\lambda|_{L \times L} = 0$ and $2
\cdot \rank(L) = \rank(H)$; the pair~$(H, \lambda)$ is called
\emph{metabolic} if it admits a Lagrangian.  The \emph{Witt group
  of~$\Q$}, denoted~$W_0(\Q)$, is the Grothendieck group of the monoid
of isomorphism classes of non-singular symmetric bilinear forms on
finite dimensional $\Q$-vector spaces under the operation of direct
sum and modulo the subgroup generated by differences of metabolic
forms~\cite[I~\S~7] {milnorhusemoller}.

If $(X, [X])$ is a $\Q$-Poincar\'{e} space of formal dimension~$4k$ 
then the cup-product followed by evaluation on $[X]$ defines a
non-singular symmetric bilinear form $(H^{2k}(X; \Q), \lambda_{[X]})$.
The \emph{Witt index of $(X, [X])$} is defined to be the equivalence class of
this form in the Witt group of $\Q$:
\[ \tau_{[X]} := \bigl[H^{2k}(X; \Q), \lambda_{[X]}\bigr] \in W_0(\Q) .\]
  
\begin{thm}[Realising rational $\Q$-Poincar\'e spaces by manifolds 
  {\cite[Th\'eor\`eme~1, Theorem~13.2]{barge,sullivan}}]\label{thm:bargesullivan}
  Suppose that $(X, [X])$ is a rational $\Q$-Poincar\'{e} complex of
  formal dimension~$4k$ and that $p_* \in H^{4*}(X; \Q)$ is a
  cohomology class with $p_0 = 1 \in H^0(X; \Q)$.  Then there is an
  oriented closed simply connected manifold~$(M, [M])$ with total
  Pontryagin class $p_M$ and a rational equivalence $f \colon M \longrightarrow
  X$ with $H_{4k}(f; \Q)(\fclq M) = [X]$ and $H^{4k}(f; \Q)(p_M) =
  p_*$ if and only if the following two conditions hold:
  \begin{enumerate}
  \item The Witt index $\tau_{[X]}$ of $(X, [X])$ lies in the image of
    the homomorphism $W_0(\Z) \longrightarrow W_0(\Q)$.
  \item There is an equality $\sign(X, [X]) = \langle L(p_*), [X]
    \rangle$ where $L(p_*)$ is the Hirzebruch $L$-class evaluated at
    $p_*$.
  \end{enumerate}
\end{thm}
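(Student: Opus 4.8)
The plan is to read this as a surgery-theoretic realisation problem and to treat the two implications separately, working in the simply connected range $4k \geq 5$ and only up to rational equivalence throughout. Necessity is the short direction, so I would dispatch it first; sufficiency is where the real work lies and would be built by an explicit four-step construction of the manifold.

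For necessity, suppose $(M,[M])$ and $f\colon M \to X$ are as in the statement. Since $f$ is a rational equivalence carrying $\fclq M$ to $[X]$, the induced map $H^{2k}(X;\Q) \to H^{2k}(M;\Q)$ is an isometry of the two cup-product-and-evaluate forms, so $\sigma(M) = \sign(X,[X])$. The Hirzebruch signature theorem rewrites $\sigma(M) = \langle L(p_M),[M]\rangle$, and naturality $f^*L(p_*) = L(p_M)$ together with $f_*[M] = [X]$ turns this into $\sign(X,[X]) = \langle L(p_*),[X]\rangle$, which is condition (2). For condition (1), the integral intersection form on $H^{2k}(M;\Z)$ modulo torsion is a nonsingular symmetric bilinear form over~$\Z$ by Poincar\'e duality; its class in $W_0(\Z)$ maps under $W_0(\Z)\to W_0(\Q)$ to the Witt class of its rationalisation, which the isometry above identifies with $\tau_{[X]}$. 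Hence $\tau_{[X]}$ lies in the image of $W_0(\Z)\to W_0(\Q)$.

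For sufficiency I would proceed in four steps. First, replace the rational space~$X$ by a genuine finite simply connected Poincar\'e complex~$Y$ with $Y_\Q \simeq X$ whose integral middle-dimensional intersection form is nonsingular and realises $\tau_{[X]}$; condition (1) is exactly what guarantees such an integral form, and hence such a~$Y$, exists. Second, pass to the Spivak normal fibration~$\nu_Y$ and choose a stable vector bundle reduction~$\xi$ whose rational Pontryagin class is $p_*$: since $BG$ is rationally contractible a reduction exists rationally, and the rational reductions form a torsor modelled on $[Y,G/O]\otimes\Q \cong \prod_i H^{4i}(Y;\Q)$ (using that $G/O \to BO$ is a rational equivalence), on which the Pontryagin-class map is an affine isomorphism, so $p_*$ can be hit. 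Third, apply Pontryagin--Thom transversality to this reduction to produce a degree-one normal map $g\colon M \to Y$ from a smooth manifold~$M$ with $p_M$ matching $p_*$ rationally. Fourth, run simply connected surgery: simple connectivity and $4k \geq 5$ let one surger below the middle dimension to make $g$ $(2k)$-connected, after which the only obstruction in $L_{4k}(\Z)\cong\Z$ is detected by the signature of the surgery kernel, that is by $\sigma(M)-\sigma(Y)$. Here $\sigma(M) = \langle L(p_*),[Y]\rangle$ by Hirzebruch, while $\sigma(Y) = \sign(X,[X])$ by the choice of~$Y$, so condition (2) forces them to agree; the kernel form then has signature zero, is therefore Witt-trivial over~$\Z$, and can be surgered away, leaving a rational equivalence $M \to Y \to Y_\Q = X$ carrying $\fclq M$ to $[X]$, which is the desired~$f$.

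The main obstacle is concentrated in the two structural inputs of the sufficiency direction, not in the formal surgery bookkeeping. The first is Barge's construction of an integral Poincar\'e complex in a prescribed rational homotopy type whose integral intersection form realises a given Witt class; this is precisely where condition (1) is consumed. The second is Sullivan's analysis of vector bundle reductions of the Spivak fibration, where one must control the reduction prime by prime (the homotopy type of $G/O$ localised at~$2$ and at odd primes) both to know an integral reduction exists and to arrange that it carries the prescribed rational Pontryagin class~$p_*$. Once these two ingredients are in place, the Hirzebruch signature theorem and the vanishing of the simply connected surgery obstruction assemble into the stated equivalence.
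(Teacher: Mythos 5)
The first thing to say is that the paper does not prove this statement at all: Theorem~\ref{thm:bargesullivan} is imported verbatim from Barge (Th\'eor\`eme~1) and Sullivan (Theorem~13.2) and used as a black box, so there is no internal proof to compare against; your sketch has to be measured against the cited sources, whose architecture it essentially reproduces. Your necessity direction is complete and correct as written: a rational equivalence carrying $[M]_\Q$ to $[X]$ induces an isometry of the middle-dimensional forms, the lattice $H^{2k}(M;\Z)/\mathrm{torsion}$ is unimodular by Poincar\'e duality, and the Hirzebruch signature theorem plus naturality yield conditions (1) and (2) exactly as you say.

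On sufficiency, your four-step outline (integral Poincar\'e complex, Spivak reduction with prescribed rational Pontryagin data, Pontryagin--Thom, simply connected surgery with obstruction the signature defect in $L_{4k}(\Z)\cong\Z$) is the right skeleton, and you correctly locate the two non-formal inputs; but both are more delicate than your phrasing suggests. First, condition~(1) gives only a Witt class, whereas your Step~1 needs an actual unimodular $\Z$-lattice inside the \emph{fixed} rational form $\bigl(H^{2k}(X;\Q),\lambda_{[X]}\bigr)$: Witt-triviality of the residues determines a form only up to stabilisation by hyperbolic planes, and since the rational homotopy type pins down the form up to $\Q$-isometry, one must run the local-global arithmetic (at each prime the vanishing Witt residue forces the $p$-adic Jordan block to be hyperbolic, hence unit-diagonalisable, and then genus theory supplies a global lattice) before any Poincar\'e complex $Y$ can be built -- and realising the rational type together with that lattice by a finite simply connected Poincar\'e complex is itself serious work, not a formality. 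Second, in Step~2 the ``exists rationally plus torsor'' argument does not by itself produce an \emph{integral} vector bundle reduction: the obstruction to reducing the Spivak fibration is a torsion class but need not vanish (there are simply connected Poincar\'e complexes with no reduction at all), so one genuinely needs Sullivan's prime-by-prime analysis of $G/O$ and the freedom to re-choose $Y$ within its rational type. Since you explicitly flag both points and attribute them to Barge and Sullivan -- that is, to precisely the sources the paper cites in lieu of a proof -- your proposal is best read as a faithful reconstruction of the cited proof's structure rather than a self-contained argument; as such it is sound, with only minor bookkeeping left implicit (normal versus tangential Pontryagin classes, so that $\xi$ must carry the multiplicative inverse of $p_*$, and the normalisation that $Y_\Q\simeq X$ carries $[Y]$ to $[X]$ on the nose).
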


\begin{prop}[Witt index] \label{prop:Wittindex}
  Let $j \in \{1,\dots, 4\}$, and let 
  $(X_j, [X_j])$ be the corresponding $\Q$-Poincar\'{e} space of
  Proposition~\ref{prop:realisabilitypoincare} (oriented by the choice of
  fundamental class in Proposition~\ref{prop:dgafcl}). Then $(X_j,
  [X_j])$ has trivial Witt index, i.e., $\tau_{[X_j]} = 0 \in W_0(\Q)$. 
  In particular, the signature~$\sign(X_j)$ of~$(X_j,[X_j])$ equals~$0$.
\end{prop}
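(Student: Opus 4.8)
The goal is to show that for each $j \in \{1,\dots,4\}$ the symmetric bilinear form $\lambda_{[X_j]}$ on the middle-degree cohomology $H^{2k}(X_j;\Q)$ is metabolic, so that its class vanishes in $W_0(\Q)$; the statement about the signature then follows immediately, since the signature is the image of the Witt class under the usual homomorphism $W_0(\Q) \to \Z$ and metabolic forms have signature zero. The plan is to exhibit a Lagrangian subspace $L \subset H^{2k}(X_j;\Q)$ directly from the explicit generators of the minimal models $A_j$, which are recorded in Section~\ref{subsec:dgadesign}. Since the cohomology of $X_j$ agrees with $H^*(A_j)$ as a graded ring, and the intersection form is computed by cup product followed by evaluation on the fundamental class $[A_j]$ (specified in Proposition~\ref{prop:dgafcl}), this reduces the whole proposition to a bookkeeping computation inside the cohomology rings of the $A_j$.

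First I would record, for each $j$, a homogeneous basis of $H^{2k_j}(A_j)$ in terms of the monomials in the generators $x_1,x_2,y_1,y_2,y_3,z$ surviving to the relevant middle degree. Next I would identify which products of two middle-degree classes are cohomologically nonzero after pairing against $[A_j]$: a class lies in the radical of the form precisely when all such pairings vanish, and a Lagrangian is a half-dimensional isotropic subspace. The key structural observation I expect to use is that the formal dimensions $64, 108, 208, 228$ are all $\equiv 0 \pmod 4$, so the middle degree $2k_j$ is even, and the parity/degree constraints coming from the odd-degree generators should force many cup products to land outside the surviving cohomology, making large isotropic subspaces easy to produce. Concretely, I would try to split $H^{2k_j}$ as $L \oplus L'$ where $L$ is spanned by monomials whose pairwise products automatically vanish in top degree (e.g.\ because they repeat an exterior generator, or because their total degree overshoots), and check $2\cdot\rank(L) = \rank(H^{2k_j})$.

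I would carry out the four cases in parallel rather than treating them separately, since the minimal models $A_1,\dots,A_4$ share the same formal shape (the same list of generator types, differing only in degrees); a single argument about which monomials pair nontrivially should specialise to all four. Granting the Lagrangian, metabolicity gives $\tau_{[X_j]} = 0 \in W_0(\Q)$, which in particular lies in the image of $W_0(\Z) \to W_0(\Q)$ as required later for Theorem~\ref{thm:bargesullivan}, and the vanishing of $\sign(X_j)$ follows.

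The main obstacle I anticipate is not conceptual but computational and organisational: correctly enumerating the surviving middle-degree monomials and verifying the cup-product pairings in the four fairly large minimal models without error, and presenting the chosen Lagrangian in a way that makes the isotropy and the dimension count transparent. A secondary subtlety is ensuring that the intersection form is taken with respect to the \emph{cohomological} fundamental class $[A_j]$ dual to $[X_j]$ as fixed in Proposition~\ref{prop:dgafcl}, so that the pairing $\lambda_{[X_j]}$ is normalised correctly; I would state this dependence explicitly at the start so the reduction to $H^*(A_j)$ is unambiguous.
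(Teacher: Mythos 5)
Your proposal is correct and follows essentially the same route as the paper: the paper's proof also reduces to an explicit computation of the intersection form on the middle cohomology of the $A_j$ (Proposition~\ref{prop:lambdaA1} gives the basis $[x_2w]$, $[x_1^2w]$, $[x_1^{16}]$, $[x_2^8]$ of $H^{32}(A_1)$ and the form's matrix) and then exhibits the Lagrangian spanned by $\{[x_2w],[x_1^2w]\}$, whose isotropy comes exactly from the mechanism you describe --- products of classes repeating odd generators vanish, here because $w^2=0$. The other three cases are handled by the same bookkeeping, with the middle cohomology of $A_4$ being zero, so that case is trivial.
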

\begin{proof}
  The result follows by explicit computation.  For example, the
  intersection form of~$(A_1, [x_2^{16}])$ and hence~$(X_1, [X_1])$ is
  computed in Proposition~\ref{prop:lambdaA1} where a basis for the middle cohomology is given.  With respect to this matrix, the intersection matrix is an
  element of $\GL(4,\Z)$ and has Lagrangian with basis $\{ [x_2w],
  [x_1^2w] \}$.  Similar calculations prove the proposition for~$A_2,
  A_3$ and~$A_4$.
\end{proof}

\begin{proof}[Proof of Theorem~\ref{thm:realisationbymfds}]
  Let $j \in \{1,\dots,4\}$, and let $(X_j,[X_j])$ be the
  simply connected rational $\Q$-Poincar\'e space provided by
  Proposition~\ref{prop:realisabilitypoincare}. In view of
  Proposition~\ref{prop:Wittindex}, the Witt index~$\tau_{[X_j]}$ lies in
  the image of the homomorphism~$W_0(\Z) \longrightarrow W_0(\Q)$;
  choosing~$p := 1 \in H^0(X_j;\Q) \subset H^*(X_j;\Q)$, we
  obtain
  \[ \sign(X_j,[X_j]) = 0 = \langle L(p), [X] \rangle. 
  \]
  Therefore, by Theorem~\ref{thm:bargesullivan}, there exists an
  oriented closed simply connected manifold~$(M,[M])$ rationally
  equivalent to~$(X_j,[X_j])$ with trivial Pontryagin class; because
  $(A_j,[A_j])$ is the minimal model of~$(X_j,[X_j])$, it follows that
  $M \in \realmfd{A_j,[A_j]}$.

  In particular, this manifold~$M$ is inflexible (using the same
  arguments as in the proof of
  Corollary~\ref{cor:inflexiblepoincarespaces}).
\end{proof}

\begin{rem}[Scaling the fundamental class]\label{rem:scaling}
  The results of Theorem~\ref{thm:realisationbymfds},
  Proposition~\ref{prop:realisabilitypoincare},
  Corollary~\ref{cor:inflexiblepoincarespaces}, and
  Proposition~\ref{prop:Wittindex} all hold if the fundamental classes
  of the respective dgas/Poincar\'e complexes are scaled by any
  non-zero rational number. The key point is that if $\lambda$ is a
  non-singular symmetric bilinear form on a finite dimensional
  $\Q$-vector space that is trivial in the Witt group~$W_0(\Q)$ and if
  $a \in \Q\setminus \{0\}$, then also $a \cdot \lambda$ is trivial in
  the Witt group (because any Lagrangian for~$\lambda$ also is a
  Lagrangian for~$a \cdot \lambda$). Notice that scalars with
  different absolute values lead to different homotopy types of simply
  connected inflexible manifolds in the same rational homotopy type
  (Proposition~\ref{prop:scaling}).
\end{rem}

Starting with the manifolds in~$\realmfd{A_1,[A_1]}, \dots,
\realmfd{A_4,[A_4]}$ we can construct many more simply connected
inflexible manifolds; a detailed discussion of these results is
deferred to Section~\ref{app:II}.

\subsection{Strongly inflexible manifolds}\label{subsec:stronglyinflexible}

A manifold~$M$ is inflexible if and only if the set~$\deg(M, M)$ is
finite.  More ambitiously we can ask that $\deg(N, M)$ is finite for
any oriented manifold~$N$ of the same dimension as~$M$.  This leads to
the notion of strongly inflexible manifolds:

\begin{defi}[Strongly inflexible manifold]\label{def:stronglyinflexible}
  We call an oriented closed connected $d$-dimensional manifold~$M$ 
  \emph{strongly inflexible} if for any oriented closed connected
  $d$-dimensional manifold~$N$ the set~$\deg(N,M)$ 
  is finite.
\end{defi}

Clearly, any strongly inflexible manifold is also inflexible.

\begin{exa}\label{exa:hypstronglyinflexible}
  The simplicial volume can be used to show that oriented closed
  connected hyperbolic manifolds~$M$ are strongly inflexible: If $N$
  is an oriented closed connected manifold of dimension~$\dim M$, then 
  \[ |\deg f| \leq \frac{\|N\|}{\|M\|} < \infty 
  \]
  for any map~$f \colon N \longrightarrow M$; notice that $\|M\| > 0$
  as $M$ is hyperbolic.
\end{exa}

Unfortunately, we do not know of any \emph{simply connected} manifolds
that are strongly inflexible.  As in the case of inflexible manifolds,
rational homotopy theory and the examples from
Section~\ref{subsec:inflexiblemanifolds} and Section~\ref{app:I} could
be a good starting point for seeking strongly inflexible manifolds.
However one sees that the necessary calculations, if they are
possible, would be significantly more complicated than in the case of
inflexible manifolds.

\begin{question}\label{q:genericinflexible}
  Is every ``random'' Poincar\'e differential graded algebra of high
  formal dimension (strongly) inflexible?
\end{question}

A small piece of evidence supporting a positive answer to
Question~\ref{q:genericinflexible} is the bordism result in
Proposition~\ref{prop:inflexibleandrationalbordism}.

\subsection{Flexible spaces and manifolds}\label{subsec:flexiblemanifolds}
Clearly, all spheres (of non-zero dimension) are flexible manifolds,
and products of oriented closed connected manifolds with flexible ones
are flexible again.  Further examples of flexible manifolds and spaces can
be obtained via rational homotopy theory: 

\begin{prop}[Simply-connected manifolds of low dimension are flexible]\label{prop:formalflexible}
  Oriented closed simply connected formal manifolds are flexible. In
  particular, all oriented closed simply connected manifolds of
  dimension~$6$ or less are flexible.
\end{prop}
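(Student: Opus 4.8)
The plan is to prove the two assertions in sequence, deriving the dimension claim from the formality claim. First I would recall the definition of a \emph{formal} space: an oriented closed simply connected manifold $M$ is formal if its minimal model $A_M$ is quasi-isomorphic to its cohomology algebra $H^*(M;\Q)$ equipped with the zero differential. The key idea is that for a formal space the rational homotopy type is determined entirely by the cohomology ring, and so endomorphisms of the cohomology ring are realised (up to rational homotopy) by self-maps. The strategy is therefore to construct, for a formal manifold, a ring endomorphism of $H^*(M;\Q)$ that scales the fundamental cohomology class by a factor $d \notin \{-1,0,1\}$, and then to invoke formality and rational homotopy theory to realise this (after clearing denominators) by a genuine self-map of $M$ of degree $d$.

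The concrete construction I would carry out is the obvious grading automorphism. Since $M$ is simply connected of dimension $n$, write $H^*(M;\Q) = \bigoplus_{i=0}^n H^i(M;\Q)$ with $H^0 = \Q$ and top class in degree $n$. For a nonzero rational parameter $t$ I would define an algebra endomorphism $\phi_t$ of $H^*(M;\Q)$ that multiplies a class of degree $i$ by $t^i$; this is multiplicative because degrees add under cup product, and it sends the fundamental cohomology class $[A_M]$ in degree $n$ to $t^n \cdot [A_M]$. Choosing $t$ a nonzero integer with $|t| \geq 2$ gives degree $t^n \notin \{-1,0,1\}$. By formality this endomorphism of the cohomology algebra corresponds to a dga self-map of the minimal model $A_M$, and by the equivalence between minimal Sullivan models and rational homotopy types it is realised by a self-map of the rationalisation $M_\Q$. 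Finally, since self-maps of $M_\Q$ are represented by genuine self-maps of $M$ after multiplying by a suitable nonzero integer (which only rescales the degree by that integer, keeping it outside $\{-1,0,1\}$), one obtains a self-map of $M$ of degree not in $\{-1,0,1\}$, witnessing flexibility. In the language of Definition~\ref{def:inflexibleclass}, this exhibits $\fclq M$ as a flexible homology class, so $M$ is flexible.

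For the second assertion I would show that every oriented closed simply connected manifold of dimension at most $6$ is formal, and hence flexible by the first part. Simply connected spaces are automatically formal through dimension $6$: the cases of dimension $2$ and $3$ are forced by simple connectivity (the manifold is rationally a sphere or a point), dimension $4$ is classical since simply connected $4$-manifolds are formal, and dimensions $5$ and $6$ follow from known formality results for low-dimensional simply connected Poincar\'e complexes. Combining formality in these dimensions with the first part of the proposition then yields flexibility for all such manifolds.

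The main obstacle I expect is the realisation step: passing from an endomorphism of the cohomology algebra to an honest continuous self-map of $M$ of the correct integer degree. Formality and the Sullivan correspondence only give a dga self-map of $A_M$, hence a self-map of the \emph{rationalisation} $M_\Q$; one must then argue that a self-map of $M_\Q$ can be lifted, after scaling by a nonzero integer, to a self-map of $M$ itself, and verify that this scaling does not push the degree back into $\{-1,0,1\}$. The delicate point is keeping track of how the integer scaling interacts with the mapping degree, and confirming that the grading automorphism $\phi_t$ genuinely respects the full algebra structure (not merely the cup product on generators but all relations); both are routine but require the formality hypothesis in an essential way.
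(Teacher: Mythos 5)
Your route is the same one the paper takes: the paper disposes of the first assertion by citing Shiga~\cite{shiga} for the fact that formal oriented closed simply connected manifolds admit self-maps of non-trivial degree, and of the second by citing Neisendorfer--Miller~\cite[Proposition~4.6]{neisendorfermiller} for formality of simply connected manifolds of dimension at most~$6$. Your reduction of the dimension claim to formality is correct, and your grading endomorphism~$\phi_t$ (multiplication by~$t^i$ in degree~$i$), lifted through the quasi-isomorphism $A_M \to (H^*(M;\Q),0)$ provided by formality to a dga endomorphism of the minimal model, and hence realised as a self-map of the rationalisation~$M_\Q$ multiplying the fundamental class by~$t^n$, is exactly the standard first half of the argument behind Shiga's theorem.

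The genuine gap is the de-localisation step, which you describe as ``self-maps of $M_\Q$ are represented by genuine self-maps of $M$ after multiplying by a suitable nonzero integer'' and classify as routine. As stated this is not meaningful: for a general finite complex~$M$, neither $[M,M_\Q]$ nor $[M_\Q,M_\Q]$ is a group, so there is no operation of multiplying a homotopy class of maps by an integer (that intuition is valid for $[S^n,Y_\Q]\cong\pi_n(Y)\otimes\Q$, but $M$ is not a sphere or a co-$H$-space). The statement one actually needs is different: for suitably chosen~$t$ (e.g.\ sufficiently divisible), the composite of the rationalisation $\rho\colon M \to M_\Q$ with~$\phi_t$ factors up to homotopy as $\rho\circ f$ for a genuine self-map $f\colon M \to M$, which then has degree~$t^n$. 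Proving this is not routine obstruction theory: the homotopy fibre of $M \to M_\Q$ has non-finitely generated (divisible) homotopy groups, so one cannot simply kill finitely many finite obstructions by choosing~$t$ divisible enough; the known proofs go through Sullivan's localisation and completion machinery, and this integral realisation statement is precisely the content of the theorem of Shiga that the paper cites. So your proposal is a faithful outline of the known proof, but the step you set aside as routine is the one carrying all the mathematical weight, and the argument as written is incomplete there. (The second half of your proposal, formality in dimensions at most~$6$, is correct in substance and is exactly the Neisendorfer--Miller result the paper invokes.)
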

\begin{proof}
  Formal oriented closed simply connected manifolds admit many
  self-maps of non-trivial degree~\cite{shiga} and so are flexible.
  Moreover, by a classical result in rational homotopy theory, all
  oriented closed simply connected manifolds of dimension at
  most~$6$ are formal~\cite[Proposition~4.6]{neisendorfermiller};
\end{proof}

A natural generalisation of formality of minimal models is being pure: 

\begin{defi}[Pure]
  A Sullivan algebra~$(\bigwedge V, d)$ is \emph{pure} if $V$ is
  finite dimensional and 
  \[ d|_{V^{\text{even}}} = 0
     \qquad\text{and}\qquad
     d(V^{\text{odd}}) \subset \bigwedge V^{\text{even}};
  \]
  here, $V^{\text{even}}$ and $V^{\text{odd}}$ denote the even and the
  odd part respectively of the graded vector space~$V$.
\end{defi}

\begin{prop}[Pure rational spaces are ``almost flexible'']\label{prop:pureflexible}
  Let $X$ be a rational space whose minimal model is pure. Then every 
  rational homology class of~$X$ in positive degree is a sum of
  flexible homology classes.
\end{prop}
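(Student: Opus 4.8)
The plan is to exhibit, for the minimal model $A = (\bigwedge V, d)$ of $X$, an explicit one-parameter family of dga endomorphisms that scale cohomology classes, and then to read off flexibility from their eigenvalues. For a nonzero rational number $s$ I would define an algebra endomorphism $\phi_s \colon \bigwedge V \longrightarrow \bigwedge V$ on generators by $\phi_s(x) = s^{|x|} \cdot x$ for $x \in V^{\text{even}}$ and $\phi_s(y) = s^{|y|+1} \cdot y$ for $y \in V^{\text{odd}}$, extended multiplicatively. The first task is to check that $\phi_s$ is a chain map. On even generators this is immediate since $d|_{V^{\text{even}}} = 0$; on an odd generator $y$ one has $d\phi_s(y) = s^{|y|+1} \cdot dy$, while $\phi_s(dy) = s^{|y|+1} \cdot dy$ because purity forces $dy \in \bigwedge V^{\text{even}}$ to be homogeneous of degree $|y|+1$ and $\phi_s$ scales a monomial in even generators of degree $D$ by $s^{D}$. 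Thus $\phi_s$ is a dga endomorphism for every $s \neq 0$.

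The key structural step is to introduce the \emph{weight} grading on $\bigwedge V$ in which a monomial of cohomological degree $n$ containing $q$ odd generators has weight $m := n + q$; this is additive under multiplication, so it makes $\bigwedge V = \bigoplus_m A_{(m)}$ into a graded algebra, and by construction $\phi_s$ acts on $A_{(m)}$ as multiplication by $s^{m}$. The place where purity is used decisively is the claim that $d$ preserves the weight grading: since $d$ kills even generators and sends each odd generator $y$ to an element of $\bigwedge V^{\text{even}}$ of degree $|y|+1$, which has weight $|y|+1$, equal to the weight of $y$ itself, and since $d$ is a derivation, each $A_{(m)}$ is a subcomplex. Consequently $H^*(A) = \bigoplus_m H^*(A_{(m)})$, and $H^*(\phi_s)$ acts on $H^*(A_{(m)})$ as multiplication by $s^{m}$; in particular $H^*(\phi_s)$ is diagonalizable with eigenvalues among the powers $s^{m}$.

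I would then specialise to $s = 2$. A cohomology class in positive degree $n$ lives in weights $m = n + q \geq n \geq 1$, so every eigenvalue of $H^*(\phi_2)$ occurring in positive degree is a power $2^{m} \geq 2$, and in particular none lies in $\{-1, 0, 1\}$. Decomposing an arbitrary positive-degree class into eigenvectors of $H^*(\phi_2)$ therefore writes it as a sum of \emph{flexible} cohomology classes in the sense of Definition~\ref{def:inflexibleclass}. Finally I would transport this to $X$: because $X$ is a simply connected rational space, the endomorphism $\phi_2$ of its minimal model is realised by a self-map $f \colon X \longrightarrow X$ under the equivalence between rational spaces and minimal Sullivan algebras, and $H_*(f;\Q)$ is the $\Q$-dual of $H^*(\phi_2)$. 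Since a linear operator and its dual share eigenvalues, $H_*(f;\Q)$ is likewise diagonalizable on $H_n(X;\Q)$ with all eigenvalues equal to powers $2^{m}$, $m \geq n \geq 1$; hence every rational homology class in positive degree is a sum of eigenvectors, each of which is flexible.

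The main obstacle is the weight bookkeeping of the middle paragraph: the entire argument hinges on finding a grading for which the scaling endomorphism is homogeneous \emph{and} the differential is weight-preserving, and it is precisely purity, namely $d(V^{\text{odd}}) \subset \bigwedge V^{\text{even}}$ together with $d|_{V^{\text{even}}} = 0$, that guarantees the differential drops the odd-generator count by exactly one while raising the degree by one, keeping the weight $m = n + q$ fixed. Once this is in place, the eigenvalue estimate and the passage from the dga back to $X$ via the rational homotopy equivalence and $\Q$-linear duality are routine.
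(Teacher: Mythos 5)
Your proof is correct and takes essentially the same route as the paper's: a scaling dga endomorphism of the pure minimal model, defined on generators, together with an auxiliary grading for which both the scaling map and the differential are homogeneous, so that every positive-degree cohomology class splits into eigenclasses with eigenvalue a power of~$2$ at least~$2$, hence flexible, and the statement for homology follows by the equivalence of categories and duality. In fact your exponents are the consistent ones: since $d$ has degree~$+1$, a chain map scaling even generators by~$s^{|x|}$ must scale an odd generator~$y$ by~$s^{|y|+1}$ (as you do), whereas the paper's proof scales $y$ by~$2^{|y|-1}$ on the stated ground that $dy$ has degree~$|y|-1$ --- a slip, given the paper's own examples in Section~\ref{subsec:dgadesign}; your weight grading $m = n+q$ (preserved by~$d$) is the corresponding correction of the paper's word-length grading (shifted by~$d$), with the small added benefit that the eigenvalue bound $2^m \geq 2$ in positive degree needs no lower bound on the degrees of the odd generators.
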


\begin{proof}
  Let $A = (\bigwedge V,d)$ be the minimal model of~$X$.  In view of
  the equivalence of categories between the category of minimal
  Sullivan dgas (and homotopy classes of dga morphisms) and the
  category of rational spaces (and homotopy classes of continuous
  maps) it suffices to show that every cohomology class
  in~$H^*(\bigwedge V,d) \cong H_*(X;\Q)$ in positive degree is a sum
  of flexible cohomology classes (as defined in
  Definition~\ref{def:inflexibleclass}).

  Let $f \colon \bigwedge V \longrightarrow \bigwedge V$ be the
  algebra morphism uniquely determined by the maps
  \begin{align*}
    V^{\text{even}} & \longrightarrow V\\
    x & \longmapsto 2^{|x|} \cdot x,\\[.5em]
    V^{\text{odd}} & \longrightarrow V\\
    y & \longmapsto 2^{|y|-1} \cdot y.
  \end{align*}
  Using the fact that $(\bigwedge V,d)$ is pure, a straightforward
  computation shows that $f$ is compatible with~$d$: On the even part,
  the differential vanishes, and so $f \circ d|_{V^{\text{even}}} = 0
  = d \circ f|_{V^{\text{even}}}$. The differential of an odd
  element~$y \in V^{\text{odd}}$ of~$V$ is a sum of products of even
  elements of~$V$ whose degrees sum up to~$|y| -1$, and so
  \[ f \circ d (y) =  2^{|y|-1} \cdot dy = d \circ f(y).
  \]
  
  Because $A$ is pure, there is an additional grading on~$A$ given by
  the word length in~$V^{\text{odd}}$; more explicitly, $A =
  \bigoplus_{k \in \N} A_{[k]}$, where
  \[ A_{[k]} := \bigwedge V^{\text{even}} 
             \otimes \bigwedge\negthinspace{}^k\, V^{\text{odd}}
  \]
  for all~$k \in \N$~\cite[p.~435]{felixhalperinthomas}; notice that
  the differential~$d$ is homoegeneous of degree~$-1$ with respect to
  this grading and that $f(z) = 2^{|z|-k} \cdot z$ holds for all~$k
  \in \N$ and all~$z \in A_{[k]}$. 

  So the dga morphism~$f$ witnesses that every cohomology class
  in~$H^*(A)$ of non-zero degree that can be represented by a cocycle in
  one of the subspaces~$A_{[k]}$ is flexible. On the other hand, using
  the direct sum decomposition~$A = \bigoplus_{k \in \N} A_{[k]}$ and
  the fact that $d$ is homogeneous of degree~$-1$ one
  can easily check that every cohomology class in~$H^*(A)$ is a sum of
  cohomology classes represented by such cocycles.
\end{proof}

Flexibility as established in Proposition~\ref{prop:formalflexible}
and~\ref{prop:pureflexible} provides a means to prove the vanishing of
finite functorial semi-norms on certain classes
(Corollary~\ref{cor:fsnvanish456} and
~\ref{cor:fsnvanishpure}). Clearly, the same methods apply whenever
the minimal models allow for an approriate grading or weight
function. For simplicity, we restricted ourselves to the cases above.

\section{Functorial semi-norms on simply connected spaces}
\label{sec:fsnsimplyconnected}

In the following we discuss Gromov's question whether all functorial
semi-norms on singular homology are trivial on simply connected
spaces (Question~\ref{q:gromov}).

Here, a key r\^ole is played by simply connected inflexible
manifolds. Recall that an oriented closed connected manifold~$M$ is
\emph{inflexible} if
\[ \deg(M,M) \subset \{-1,0,1\}.
\]

We start with a construction of a functorial semi-norm that is not
trivial on all simply connected spaces
(Section~\ref{subsec:nontrivialsimplyconn}); on the other hand, we
show in Section~\ref{subsec:positiveanswer} that the finite case of
Gromov's question can be answered affirmatively in all dimensions~$d
\leq 6$.

\subsection{Functorial semi-norms that are non-trivial on certain simply
  connected spaces}\label{subsec:nontrivialsimplyconn}

Using the construction from Section~\ref{sec:genfuncsn} and simply
connected inflexible manifolds, we obtain a (possibly infinite)
functorial semi-norm that is non-trivial on simply connected spaces:

Recall that an oriented closed connected manifold~$N$ is said to
\emph{dominate} an oriented closed connected manifold~$M$ of the same
dimension if there exists a continuous map~$N \longrightarrow M$ of
non-zero degree.

\begin{defi}[Domination $\Mfd_d$-semi-norm associated with a $d$-manifold]
  Let $M$ be an oriented closed connected $d$-manifold. Then the 
  \emph{domination $\Mfd_d$-semi-norm~$v_M \colon \Mfd_d \longrightarrow
    [0,\infty]$ associated with~$M$} is defined by
  \begin{align*} 
     v_M (N) & :=
     \sup \bigl\{|d| \bigm| d\in\deg(N,M)\bigr\}\\ 
         & \phantom{:}= 
           \sup \bigl\{ |\deg f|
                \bigm| \text{$f \colon N \longrightarrow M$ continuous}
                \bigr\}
        \in [0,\infty]
  \end{align*}
  for all~$N \in \Mfd_d$.
\end{defi}

\begin{prop} \label{prop:vmfn}
  If $M$ is an oriented closed connected $d$-manifold, then the domination
  $\Mfd_d$-semi-norm~$v_M$ is functorial. 
\end{prop}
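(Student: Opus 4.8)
The plan is to verify the defining inequality of a functorial $\Mfd_d$-semi-norm (Definition~\ref{def:assocfunsn}) directly from the definition of~$v_M$, using multiplicativity of the mapping degree under composition. Concretely, I must show that for every continuous map~$f \colon P \longrightarrow Q$ between oriented closed connected $d$-manifolds we have $|\deg f| \cdot v_M(Q) \leq v_M(P)$.

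First I would recall that for composable continuous maps of oriented closed connected manifolds of the same dimension the mapping degree is multiplicative, i.e.\ $\deg(g \circ f) = \deg g \cdot \deg f$. Then, fixing an arbitrary continuous map~$g \colon Q \longrightarrow M$, the composite~$g \circ f \colon P \longrightarrow M$ is one of the maps entering the supremum defining~$v_M(P)$, so that
\[ |\deg f| \cdot |\deg g| = |\deg(g \circ f)| \leq v_M(P). \]
Taking the supremum over all such~$g$ on the left-hand side yields $|\deg f| \cdot v_M(Q) \leq v_M(P)$, which is precisely the functoriality condition required of an $\Mfd_d$-semi-norm.

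The only point requiring care is the bookkeeping with the value~$\infty$ and the conventions of Definition~\ref{def:assocfunsn}. If $\deg f = 0$, the claimed inequality reads $0 \leq v_M(P)$ and is trivial; if $\deg f \neq 0$ and $v_M(Q) = \infty$, then choosing maps~$g_n \colon Q \longrightarrow M$ whose degrees satisfy $|\deg g_n| \rightarrow \infty$ produces composites~$g_n \circ f$ with $|\deg(g_n \circ f)| = |\deg f| \cdot |\deg g_n|$ also tending to~$\infty$, forcing $v_M(P) = \infty$ as well. I therefore expect no genuine obstacle: the entire content is the multiplicativity of degree under composition, and the remaining work is merely this small case distinction for infinite values, together with the observation that the supremum is always taken over a non-empty set, since constant maps always contribute the degree~$0$.
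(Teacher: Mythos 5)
Your proof is correct and follows exactly the route the paper takes: the paper's own (one-line) proof also reduces functoriality of~$v_M$ to the multiplicativity of the mapping degree under composition, applied to composites~$g \circ f \colon P \to M$ as in your argument. Your explicit handling of the supremum, the value~$\infty$, and the degree-zero case simply fills in details the paper leaves to the reader.
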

\begin{proof}
  This follows from the definition of the domination semi-norm and
  multiplicativity of the mapping degree.
\end{proof}

\begin{defi}[Domination semi-norm associated with a $d$-manifold]
  Let $M$ be an oriented closed connected $d$-manifold.  Then the
  \emph{domination semi-norm~$\fsn{\cdot}_M$ on singular homology of
    degree~$d$} is the semi-norm on singular homology in degree~$d$
  associated with~$v_M$ (see Definition \ref{def:assocfunsn}).  By
  Proposition \ref{prop:vmfn} and Theorem~\ref{thm:genfunsn},
  $\fsn{\cdot}_{\!\!M}$ is a functorial semi-norm on~$H_d(\args;\R)$.
\end{defi}

\begin{cor}\label{cor:inflexiblesn}
  If $M$ is a simply connected closed inflexible manifold, then the
  domination semi-norm~$\fsn{\cdot}_{\!\!M}$ is not zero or infinite on all simply connected
  spaces. Hence there are functorial semi-norms on singular
  homology that are not zero or infinite on all simply connected
  spaces.
\end{cor}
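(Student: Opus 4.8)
The plan is to apply Theorem~\ref{thm:genfunsn}\enumref{thm:genfunsnv} to the domination $\Mfd_d$-semi-norm $v_M$ (which is functorial by Proposition~\ref{prop:vmfn}), which tells us that the associated domination semi-norm $\fsn{\cdot}_{\!\!M}$ on $H_d(\args;\R)$ satisfies $\fsn{\fclr M}_{\!\!M} = v_M(M)$. So the whole statement reduces to exhibiting a single simply connected space $X$ carrying a homology class on which $\fsn{\cdot}_{\!\!M}$ takes a value that is neither $0$ nor $\infty$. The natural candidate for $X$ is $M$ itself, with the class $\fclr M$, since $M$ is simply connected by hypothesis; then I just need to show $v_M(M) \in (0,\infty)$.

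Here the inflexibility of $M$ does the work. First I would observe $v_M(M) > 0$: the identity map $\id_M \colon M \longrightarrow M$ has degree $1$, so $1 \in \deg(M,M)$ and hence $v_M(M) = \sup\{|d| \mid d \in \deg(M,M)\} \geq 1$. Second, I would observe $v_M(M) < \infty$: since $M$ is inflexible we have $\deg(M,M) \subset \{-1,0,1\}$, so every element of $\deg(M,M)$ has absolute value at most $1$, whence $v_M(M) \leq 1$. Combining the two gives $v_M(M) = 1$, and therefore $\fsn{\fclr M}_{\!\!M} = 1$, which is finite and positive.

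Finally I would package this: by Theorem~\ref{thm:genfunsn}, $\fsn{\cdot}_{\!\!M}$ is a genuine functorial semi-norm on singular homology in degree~$d$, and the computation $\fsn{\fclr M}_{\!\!M} = 1$ shows it takes a finite positive value on the fundamental class of the simply connected space $M$. This proves both assertions of the corollary at once, the second being an immediate existential consequence of the first once we recall that simply connected closed inflexible manifolds exist (Theorem~\ref{thm:realisationbymfds}).

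The argument is essentially bookkeeping once the right test class is chosen; there is no genuine obstacle, since all the analytic content has been front-loaded into Theorem~\ref{thm:genfunsn} and into the existence of the inflexible manifolds of Theorem~\ref{thm:realisationbymfds}. The only point demanding a moment's care is to use $\fclr M$ on $M$ itself (rather than trying to control $\fsn{\cdot}_{\!\!M}$ on an arbitrary simply connected space), so that the equality $\fsn{\fclr M}_{\!\!M} = v_M(M)$ from Theorem~\ref{thm:genfunsn}\enumref{thm:genfunsnv} applies directly and both the positivity ($\id_M$) and the finiteness (inflexibility) become one-line degree observations.
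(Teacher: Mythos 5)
Your proposal is correct and follows exactly the paper's own argument: apply Theorem~\ref{thm:genfunsn}~\eqref{thm:genfunsnv} to the fundamental class of the inflexible manifold $M$ itself, obtaining $\fsn{\fclr M}_{\!\!M} = v_M(M) = 1 \notin \{0,\infty\}$, with existence supplied by Theorem~\ref{thm:realisationbymfds}. The only difference is that you spell out the two-line computation $v_M(M)=1$ (identity map for the lower bound, inflexibility for the upper bound), which the paper leaves implicit.
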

\begin{proof}
  Let $M$ be a simply connected closed inflexible manifold; 
  such a manifold exists by
  Theorem~\ref{thm:realisationbymfds} -- we can even find such
  manifolds in infinitely many different dimensions
  (Corollary~\ref{cor:manyinflexible}).  
  By Theorem~\ref{thm:genfunsn}~\eqref{thm:genfunsnv} we have
  %
  \[ \fsn{[M]_\R}_{\!\!M} = v_M(M) = 1 \not\in\{0,\infty\}. \]
  Here, $[M]_\R$ is of course the $\R$-fundamental class of the simply
  connected closed manifold~$M$.  
  We give a graphical description of the domination semi-norm associated to~$M$ in
  Figure~\ref{fig:nontrivsn}.
\end{proof}

  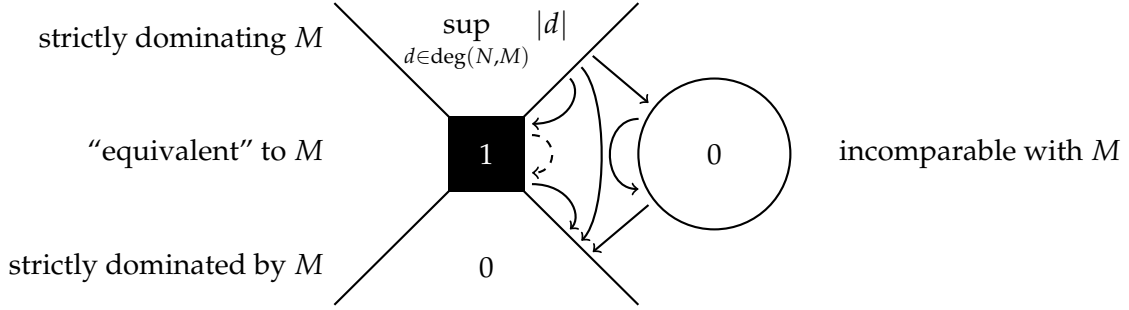
\begin{figure}
    \begin{center}
      \makebox[0pt]{%
      \def\fcolour{blue!40}
      \begin{tikzpicture}[x=0.5cm,y=0.5cm,thick]
        \draw (0,0) -- (-4,4);
        \draw (0,0) -- (4,4);
        \draw (0,3) node {$\displaystyle\sup_{d \in \deg(N,M)}|d|$};
        \draw (-4,3) node[left] {strictly dominating~$M$};
        \draw[->] (2.2,2) .. controls +(-45:0.5) and +(0:1) .. (1.2,0.8);
        \draw[->] (2.5,2.3) .. controls +(-45:1) and +(45:1) .. (2.5,-2.3);
        \draw (0,0) -- (-4,-4);
        \draw (0,0) -- (4,-4);
        \draw (0,-3) node {$0$};
        \draw (-4,-3) node[left] {strictly dominated by~$M$};
        \fill[color=black] (-1,-1) rectangle +(2,2);
        \draw[color=white] (0,0) node {\bf$1$};
        \draw (-4,0) node[left] {``equivalent'' to~$M$};
        \draw[dashed,->] (1.2,0.5) .. controls +(0:0.7) and +(0:0.7).. (1.2,-0.5);
        \draw[->] (1.2,-0.8) .. controls +(0:1) and +(45:0.5) .. (2.2,-2);
        \draw (6,0) circle(2);
        \draw (6,0) node {$0$};
        \draw (9,0) node[right] {incomparable with~$M$};
        \begin{scope}[shift={(6,0)}]
          \draw[->] (155:2.2) .. controls +(180:1) and +(180:1).. (205:2.2);
          \draw[->] (-142:2.2) -- (-3.2,-2.6);
          \draw[->] (-3.2,2.6) -- (142:2.2);
        \end{scope}
      \end{tikzpicture}}
    \end{center}
    \caption{Schematic construction of~$v_M$ in the proof of
      Corollary~\ref{cor:inflexiblesn}; the arrows indicate where maps
      of non-zero degree can exist, the dashed arrow indicates that only
      maps of degree~$-1$,~$0$,~$1$ can exist.}
    \label{fig:nontrivsn}
  \end{figure}

\begin{rem}
  We do not know whether the functorial semi-norms constructed in
  Corollary~\ref{cor:inflexiblesn} are finite.  If $M$ is a an oriented 
  closed connected $d$-manifold then by construction the
  domination $\Mfd_d$-semi-norm~$v_M$ is finite if and only if $M$ is strongly inflexible.
  It then follows by Theorem~\ref{thm:genfunsn}~\eqref{thm:genfunsnfinite} and the definitions
  that the associated functorial semi-norm
  $\fsn{\cdot}_{\!\!M}$ is finite if and only if $M$ is strongly inflexible.
  The existence of simply-connected strongly inflexible manifolds
  remains open at the time of writing.
\end{rem}

As we do not know of any simply connected strongly inflexible
manifold, Gromov's question
(Question~\ref{q:gromov}\enumref{enum:finite}) remains open for
\emph{finite} functorial semi-norms on singular homology.

\subsection{Partial results on finite functorial semi-norms on simply
  connected spaces}\label{subsec:positiveanswer}

In view of the Hurewicz theorem, all finite functorial semi-norms in
degree~$1$,~$2$ or~$3$ vanish on simply connected spaces: any integral
homology class of degree~$1$,~$2$ or~$3$ of a simply connected space
can be represented by a sphere. 

\begin{prop}\label{prop:equivstronglyinflexible}
  For $d \in \N_{\geq 4}$ the following statements are equivalent: 
  \begin{enumerate}
   \item There is a \emph{finite} functorial semi-norm~$\fsn \args$
     on~$H_d(\args;\R)$ such that for some homology class~$\alpha \in
     H_d(X;\R)$ of some simply connected space~$X$ we have $\fsn
     \alpha \neq 0$.
   \item There exists an oriented closed simply
     connected $d$-manifold that is strongly inflexible.
  \end{enumerate}
 \end{prop}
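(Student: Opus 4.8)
The plan is to prove the two implications separately, exploiting the machinery developed in Sections~\ref{sec:genfuncsn} and~\ref{sec:repmfd}.

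For the direction $(2) \Rightarrow (1)$, I would start from a simply connected strongly inflexible $d$-manifold~$M$. The natural candidate semi-norm is the domination semi-norm~$\fsn{\cdot}_{\!\!M}$ introduced just above, which is associated with the functorial $\Mfd_d$-semi-norm~$v_M$. Since $M$ is strongly inflexible, $\deg(N,M)$ is finite for every $d$-manifold~$N$, so $v_M(N) = \sup\{|d| \mid d \in \deg(N,M)\}$ is finite for all~$N$; hence $v_M$ is a \emph{finite} functorial $\Mfd_d$-semi-norm. By Theorem~\ref{thm:genfunsn}~\eqref{thm:genfunsnfinite}, the associated semi-norm~$\fsn{\cdot}_{\!\!M}$ on~$H_d(\args;\R)$ is therefore finite. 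Taking $X := M$ and $\alpha := \fclr M$, Theorem~\ref{thm:genfunsn}~\eqref{thm:genfunsnv} gives $\fsn{\fclr M}_{\!\!M} = v_M(M) = 1 \neq 0$, and since $M$ is simply connected this exhibits the required nonzero value on a simply connected space. This direction is essentially a bookkeeping exercise combining the remark after Corollary~\ref{cor:inflexiblesn} with the finiteness clause of Theorem~\ref{thm:genfunsn}.

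For the harder direction $(1) \Rightarrow (2)$, suppose a finite functorial semi-norm~$\fsn\args$ on~$H_d(\args;\R)$ takes a nonzero value on some class~$\alpha \in H_d(X;\R)$ of a simply connected space~$X$. The goal is to manufacture from this data a simply connected manifold~$M$ that is strongly inflexible. Since $d \geq 4$, I would invoke Corollary~\ref{cor:repspaces}~(2) to represent~$\alpha$ (up to a nonzero scalar~$a$) by an oriented closed simply connected smooth $d$-manifold~$M$ via a map $f \colon M \to X$ with $a \cdot H_d(f;\R)\fclr M = \alpha$. The key observation is that functoriality forces $\fsn{\fclr M} \neq 0$: if it vanished, then $\fsn\alpha = |a| \cdot \fsn{H_d(f;\R)\fclr M} \leq |a| \cdot \fsn{\fclr M} = 0$, contradicting $\fsn\alpha \neq 0$. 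So $M$ is a simply connected manifold with $\fsn{\fclr M} =: c \in (0,\infty)$, the finiteness giving $c < \infty$.

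It then remains to show that this~$M$ is strongly inflexible, and this is where I expect the main obstacle. For any oriented closed connected $d$-manifold~$N$ and any map~$g \colon N \to M$, the degree theorem of Remark~\ref{rem:degthms} yields $|\deg g| \cdot \fsn{\fclr M} \leq \fsn{\fclr N}$, so that $|\deg g| \leq \fsn{\fclr N}/c$. The difficulty is that $\fsn{\fclr N}$ could in principle be infinite for some~$N$ even when $\fsn\args$ is finite as a semi-norm—but here finiteness of the semi-norm on \emph{every} homology group rules this out: $\fsn{\fclr N} < \infty$ because $\fsn\args$ is by hypothesis a finite functorial semi-norm. Thus $|\deg g| \leq \fsn{\fclr N}/c < \infty$ is a bound independent of~$g$, forcing $\deg(N,M)$ to be a finite set for every~$N$, which is precisely strong inflexibility. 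The only subtle point to check carefully is that the representing manifold~$M$ produced by Corollary~\ref{cor:repspaces} is genuinely simply connected (guaranteed by part~(2) in the range $d \geq 4$) and that the uniform degree bound is correctly extracted from the finiteness of~$\fsn\args$ on all of~$H_d(\args;\R)$ rather than merely on the single class~$\alpha$.
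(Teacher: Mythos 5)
Your proposal is correct and follows essentially the same route as the paper's own proof: the direction $(2)\Rightarrow(1)$ uses the domination semi-norm $v_M$ together with Theorem~\ref{thm:genfunsn}, and the direction $(1)\Rightarrow(2)$ represents $\alpha$ by a simply connected manifold via Corollary~\ref{cor:repspaces} and extracts the degree bound from functoriality and finiteness, exactly as in the paper. The subtleties you flag (simple connectivity of the representing manifold from the $d\geq 4$ case of Corollary~\ref{cor:repspaces}, and finiteness of $\fsn{\fclr N}$ for \emph{all} manifolds $N$) are precisely the points the paper's argument relies on.
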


\begin{proof}
  First, let us assume that the first statement holds.  Without loss
  of generality we may assume that $X$ is path-connected and (in view 
  of the triangle inequality) that $\alpha$ is rational.  By
  Corollary~\ref{cor:repspacesr} we can write~$\alpha = a \cdot
  H_d(f;\R) \fclr M$, where $M$ is some oriented closed simply
  connected $d$-manifold, $f \colon M \longrightarrow X$ is a
  continuous map, and $a \in \R \setminus \{0\}$. We now show that the
  manifold~$M$ is strongly inflexible: Because $\fsn\args$ is finite
  and functorial, we obtain
  \[ \infty > \fsn[big]{\fclr M}
     \geq \frac1{|a|} \cdot |\alpha| > 0.
  \]
  If $N$ is an oriented closed connected $d$-manifold, then for all
  continuous maps~$g \colon N \longrightarrow M$ it follows that 
  \[ |\deg(g)| \leq \frac{\fsn[big]{\fclr N}}{\fsn[big]{\fclr M}} < \infty.  
  \]
  Hence, $\deg(N,M)$ is finite, and so $M$ is strongly inflexible. 

  Conversely, if there exists an oriented closed simply connected
  strongly inflexible $d$-manifold~$M$, we consider the functorial
  semi-norm~$\fsn \args$ associated with the domination
  semi-norm~$v_M$ for~$M$ (see
  Section~\ref{subsec:nontrivialsimplyconn}). Because $M$ is strongly
  inflexible, $v_M$ is finite. So by Theorem~\ref{thm:genfunsn}, also
  $\fsn\args$ is finite, and $\fsn{\fclr M} = v_M(M) =1 \not\in
  \{0,\infty\}$.
\end{proof}

\begin{cor}[Degrees~$4$,~$5$, and~$6$]\label{cor:fsnvanish456}
  All finite functorial semi-norms in degree~$4$,~$5$, and~$6$ vanish
  on simply connected spaces.
\end{cor}
\begin{proof}
  All oriented closed connected simply connected manifolds of
  dimension at most~$6$ are flexible
  (Proposition~\ref{prop:formalflexible}), and so cannot be strongly
  inflexible. Hence, the claim follows by applying
  Proposition~\ref{prop:equivstronglyinflexible}.
\end{proof}

Because finite functorial semi-norms vanish on flexible homology
classes, we obtain:

\begin{cor}\label{cor:fsnvanishpure}
  Let $X$ be a rational space whose minimal model is pure. Then every
  finite functorial semi-norm vanishes on every homology class of~$X$
  in positive degree.
\end{cor}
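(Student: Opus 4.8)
The plan is to combine Proposition~\ref{prop:pureflexible} with the elementary observation that finite functorial semi-norms vanish on homology classes admitting an ``expanding'' self-map. First I would isolate the following bridge lemma: if $\fsn{\cdot}$ is a finite functorial semi-norm in some positive degree, $X$ is a space, and $\beta \in H_*(X;\R)$ is a class for which there is a continuous map $f \colon X \longrightarrow X$ with $H_*(f;\R)(\beta) = d \cdot \beta$ for some $d \in \R$ with $|d| > 1$, then $\fsn\beta = 0$. Indeed, functoriality together with homogeneity gives
\[
  |d| \cdot \fsn\beta = \fsn[big]{d \cdot \beta} = \fsn[big]{H_*(f;\R)(\beta)} \leq \fsn\beta,
\]
so that $(|d|-1) \cdot \fsn\beta \leq 0$; since $\fsn\beta < \infty$ and $|d| > 1$, this forces $\fsn\beta = 0$. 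This is exactly the mechanism of Remark~\ref{rem:degthms}, now read in the direction that a self-map of large ``degree'' kills the semi-norm.

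Next I would feed Proposition~\ref{prop:pureflexible} into this lemma. That proposition writes every rational homology class of~$X$ in positive degree as a sum of flexible classes, and its proof produces the witnessing self-map concretely: the dga endomorphism scaling a cohomology class represented by a cocycle in $A_{[k]}$ of degree~$n$ by the factor $2^{n-k}$. The point I would emphasise is that this factor is genuinely $> 1$ on classes of positive degree: in a simply connected minimal model the odd generators have degree at least~$3$, so a nonzero element of $A_{[k]}$ of positive degree has $n > k$ and hence $2^{n-k} > 1$. Via the equivalence of categories between minimal Sullivan algebras and rational spaces, this endomorphism is induced by a continuous self-map of~$X$, so each summand~$\alpha_i$ satisfies the hypothesis of the bridge lemma (over~$\Q$, hence over~$\R$ by naturality of the change-of-coefficients map), and therefore $\fsn{\alpha_i} = 0$.

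Finally I would assemble the pieces using the triangle inequality. For a class $\alpha$ coming from $H_*(X;\Q)$, writing $\alpha = \sum_i \alpha_i$ as above gives $\fsn\alpha \leq \sum_i \fsn{\alpha_i} = 0$. To reach an arbitrary class in $H_*(X;\R)$ of positive degree, I would fix a $\Q$-basis of $H_*(X;\Q)$, express the real class as a finite $\R$-linear combination of the images of the basis elements, and apply homogeneity and the triangle inequality once more; each basis class has vanishing semi-norm by the previous step, so the whole class does as well.

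I expect the main obstacle to be the bookkeeping around the scaling factor, specifically verifying that the flexibility furnished by Proposition~\ref{prop:pureflexible} is always \emph{expanding} ($|d| > 1$) rather than merely nontrivial: the definition of flexibility in Definition~\ref{def:inflexibleclass} only requires $d \in \Q \setminus \{-1,0,1\}$, and a contracting witness ($|d| < 1$) would yield no information in the inequality above. The resolution is that the explicit endomorphism constructed in the proof of Proposition~\ref{prop:pureflexible} scales by $2^{n-k}$ with $n > k$, so one never has to confront the contracting case here. The remaining point, the passage from $\Q$- to $\R$-coefficients, is routine naturality and linearity.
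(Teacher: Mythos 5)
Your proposal is correct and takes essentially the same route as the paper: the paper's one-line proof combines Proposition~\ref{prop:pureflexible} with the preceding observation that finite functorial semi-norms vanish on flexible homology classes, which is precisely your bridge lemma followed by the triangle-inequality assembly. Your additional check that the witnesses from Proposition~\ref{prop:pureflexible} are genuinely expanding (scaling factor $2^{n-k} > 1$ since $n > k$ in positive degree) is a worthwhile sharpening, because the paper's blanket claim about flexible classes, read literally against Definition~\ref{def:inflexibleclass}, would not cover a contracting witness.
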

\begin{proof}
  This is a direct consequence of the fact that rational spaces with
  pure minimal model are almost flexible
  (Proposition~\ref{prop:pureflexible}). 
\end{proof}

Moreover, it follows from Gaifullin's work~\cite{gaifullin} that
finite functorial semi-norms which are multiplicative with respect to finite 
coverings are trivial on simply connected spaces:

\begin{defi}[URC-manifold~\protect{\cite[p.~1747]{gaifullinurc}}]
  Let $d \in \N$.
  An oriented closed connected $d$-manifold~$M$ is a \emph{URC-manifold} 
  (Universal Realisation of Cycles), if for every  topological space $X$ and every $\alpha \in H_d(X; \mathbb{Z})$,
  there is a finite sheeted covering $\overline M$ of $M$, a map $f \colon \overline M \to X$, 
  and~$k \in \Z \setminus \{0\}$ such that
  \[ H_d(f;\Z)\bigl([\overline M]\bigr) = k \cdot \alpha \in H_d(X;\Z).\]
\end{defi}

Gaifullin proved that there are many URC-manifolds in each
dimension~\cite[Theorem~1.3]{gaifullinurc}. Clearly, any URC-manifold
of dimension at least~$2$ is strongly inflexible and has non-zero
simplicial volume, because its finite coverings dominate hyperbolic
manifolds, which are strongly inflexible
(Example~\ref{exa:hypstronglyinflexible}).

\begin{exa}[Functorial semi-norms associated with coverings of URC-manifolds]\label{exa:stable}
  Let $d \in \N$, let $M$ be an oriented closed connected
  URC-mani\-fold of dimension~$d$, and let $S \subset \Mfd_d$ be the
  subclass of all finite connected covering spaces
  of~$M$. Then~$v_M|_S$ is a functorial semi-norm on~$S$. If $d
  \geq2$, then $M$ is strongly inflexible, and so $v_M|_S$ is a finite
  functorial semi-norm on~$S$ with~$v_M|_S(M) = 1$. More explicitly,
  multiplicativity under finite coverings and functoriality of 
  simplicial volume show that
  \[ v_M|_S(N) = \frac{\|N\|}{\|M\|} = \text{number of sheets of any covering~$N \rightarrow M$}
  \]
  holds for all~$N \in S$.

  Let $\fsn{\args}\!_M^c$ be the associated
  functorial semi-norm on~$H_d(\args;\R)$; because of the URC-property, 
  this functorial semi-norm~$\fsn\args\!_M^c$ is finite.
\end{exa}

\begin{prop}[Multiplicative finite functorial semi-norms]
  Let $d \in \N$ and let $\fsn\args$ be a finite functorial 
  semi-norm on~$H_d(\args;\R)$ that is multiplicative with respect to 
  finite coverings, i.e., satisfying: For all topological spaces~$X$, 
  all finite coverings~$p \colon Y \longrightarrow X$ and all~$\alpha \in H_d(Y;\R)$ 
  we have
  \[ \fsn[big]{H_d(p;\R) (\alpha)} 
     = \frac1k \cdot \fsn \alpha,
  \]
  where $k$ denotes the number of sheets of~$p$. Then there exists a
  constant~$c\in\R_{\geq 0}$ such that for all topological spaces~$X$
  and all~$\alpha \in H_d(X;\R)$ we have
  \[ \fsn \alpha \leq c \cdot \|\alpha\|_1. 
  \]
  In particular, $\fsn\args$ is trivial on simply connected spaces.
\end{prop}
\begin{proof}
  Without loss of generality we may assume that $d \geq 2$. Let $M$ be
  an oriented closed connected URC-manifold~$M$ in dimension~$d$. It
  follows from the arguments of Gaifullin that there is a constant~$a
  \in \R_{\geq 0}$ satisfying~\cite[Proposition~6.2]{gaifullin}
  \[ \fsn\args\!_M^c \leq a\cdot \|\args\|_1. 
  \]
  On the other hand, multiplicativity of~$\fsn\args$ and the
  construction of~$\fsn\args\!_M^c$ (Example~\ref{exa:stable}) show that
  \[ \fsn[big]{\fclr N} = \fsn[big]{\fclr M} \cdot \fsn[big]{\fclr N}\!_M^c 
  \]
  holds for all~$N \in S$ and hence that~$\fsn{\args} \leq \fsn{\fclr M} \cdot \fsn\args\!_M^c$. 
  Therefore, 
  \[ \fsn\args \leq a \cdot \fsn[big]{\fclr M} \cdot \|\args\|_1. \qedhere 
  \]
\end{proof}

\section{Appendix I: Four inflexible Poincar\'e dgas } \label{app:I}

This appendix is devoted to the algebraic side of inflexibility -- we
construct the four inflexible Poincar\'e differential graded algebras
used in Section~\ref{sec:(in)flexiblemanifolds}. We explain the
construction in Section~\ref{subsec:dgadesign}. In
Section~\ref{subsec:poincare}, we prove that these dgas are
Poincar\'e dgas; the intersection forms are computed in
Section~\ref{subsec:intersectionform}. In
Section~\ref{subsec:inflexibledga}, we show that these dgas are
inflexible. 
 
\subsection{A design pattern for possibly inflexible dgas} \label{subsec:dgadesign}
We start by defining a collection of dgas; all of the four concrete
examples below follow the same design pattern based on two examples of
Arkowitz and Lupton~\cite[Example~5.1 and~5.2]{arkowitzlupton}, which
are respectively examples $A_3$ and $A_4$ below.  We shall construct
dgas having the following properties:
\begin{itemize}
  \item two generators~$x_1$,~$x_2$ of even degree with trivial
    differential,
  \item four generators~$y_1$,~$y_2$,~$y_3$,~$z$ of odd degree; 
    the differential is given by
    \[
    \begin{array}[t]{lcl}
      dy_1 &:=& x_1^3 x_2\\
      dy_2 &:=& x_1^2 x_2^2\\
      dy_3 &:=& x_1 x_2^3,
    \end{array}
    \]
    and for the differential of~$z$ we choose~$z' \in \bigwedge
    (x_1,x_2,y_1,y_2,y_4)$ in such a way that  
    $d(y_1y_2y_3) = x_1^k \cdot z'$ or $d(y_1y_2y_3)= x_2^k
    \cdot z'$ and set 
    \[ dz := z' + x_1^{k_1} + x_2^{k_2}
    \]
    with suitable exponents~$k$,~$k_1$,~$k_2 \in \N_{>0}$.
\end{itemize}
By construction, $d \circ d(y_j) =0$ for all~$j \in \{1,2,3\}$ and $d
\circ d(z) =0$; moreover, these dgas are finitely generated minimal
dgas.

The following four examples dgas
  \[ A_j := \bigl(\bigwedge(x_1,x_2,y_1,y_2,y_3,z),d\bigr) \] 
with~$j \in \{1,\dots,4\}$ are all of this kind:

\begin{exa}[$A_1$: an elliptic inflexible dga of formal dimension~$64$]
  \label{exa:dga64}
  Define the dga $A_1$ with generators of  degrees
  \[ (|x_1|, |x_2|, |y_1|, |y_2|, |y_3|, |z|) = (2, 4, 9, 11, 13, 35) \] 
 where the differential~$d$ is given by
  \[ 
  \begin{array}[t]{lcl}
    dx_1 &:=& 0\\
    dx_2 &:=& 0
  \end{array}
  \quad
  \begin{array}[t]{lcl}
    dy_1 &:=& x_1^3 x_2\\
    dy_2 &:=& x_1^2 x_2^2\\
    dy_3 &:=& x_1 x_2^3
  \end{array}
  \quad
  \begin{array}[t]{lcl}
    dz &:=& x_2^4 y_1 y_2 - x_1 x_2^3 y_1y_3 + x_1^2x_2^2 y_2y_3\\
       & +& x_1^{18} + x_2^9\\
       & =& x_2^2 \cdot w + x_1^{18} + x_2^9,
  \end{array}
  \]
  where we use the abbreviation~$w := x_2^2 y_1y_2 - x_1x_2 y_1 y_3 +
  x_1^2 y_2y_3$; in other words~$x_1x_2w=
  d(y_1y_2y_3)$. 
\end{exa}

\begin{exa}[$A_2$: an elliptic inflexible dga of formal dimension~$108$]
  \label{exa:dga108}
    Define the dga $A_2$ with generators of  degrees
  \[ (|x_1|, |x_2|, |y_1|, |y_2|, |y_3|, |z|) = (4, 6, 17, 19, 21, 59) \] 
 where the differential~$d$ is given by
  \[ 
  \begin{array}[t]{lcl}
    dx_1 &:=& 0\\
    dx_2 &:=& 0
  \end{array}
  \quad
  \begin{array}[t]{lcl}
    dy_1 &:=& x_1^3 x_2\\
    dy_2 &:=& x_1^2 x_2^2\\
    dy_3 &:=& x_1 x_2^3
  \end{array}
  \quad
  \begin{array}[t]{lcl}
    dz &:=& x_2^4 y_1y_2 - x_1x_2^3 y_1y_3 + x_1^2x_2^2 y_2y_3\\
       & +& x_1^{15} + x_2^{10}.
  \end{array}
  \]
\end{exa}

\begin{exa}[$A_3$: an elliptic inflexible dga of formal
  dimension~$208$~\protect{\cite[Example~5.1]{arkowitzlupton}}] 
  \label{exa:dga208}
      Define the dga $A_3$ with generators of  degrees
  \[ (|x_1|, |x_2|, |y_1|, |y_2|, |y_3|, |z|) = (8, 10, 33, 35, 37, 119) \] 
 where the differential~$d$ is given by
  
  \[ 
  \begin{array}[t]{lcl}
    dx_1 &:=& 0\\
    dx_2 &:=& 0
  \end{array}
  \quad
  \begin{array}[t]{lcl}
    dy_1 &:=& x_1^3 x_2\\
    dy_2 &:=& x_1^2 x_2^2\\
    dy_3 &:=& x_1 x_2^3
  \end{array}
  \quad
  \begin{array}[t]{lcl}
    dz &:=& x_1^4x_2^2 y_1y_2 - x_1^5 y_1y_3 + x_1^6 y_2y_3\\
       & +& x_1^{15} + x_2^{12}.
  \end{array}
  \]
\end{exa}

\begin{exa}[$A_4$: an elliptic inflexible dga of formal
  dimension~$228$~\protect{\cite[Example~5.2]{arkowitzlupton}}] 
  \label{exa:dga228}
    Define the dga $A_4$ with generators of  degrees
  \[ (|x_1|, |x_2|, |y_1|, |y_2|, |y_3|, |z|) = (10, 12, 41, 43, 45, 119) \] 
 where the differential~$d$ is given by

  \[ 
  \begin{array}[t]{lcl}
    dx_1 &:=& 0\\
    dx_2 &:=& 0
  \end{array}
  \quad
  \begin{array}[t]{lcl}
    dy_1 &:=& x_1^3 x_2\\
    dy_2 &:=& x_1^2 x_2^2\\
    dy_3 &:=& x_1 x_2^3
  \end{array}
  \quad
  \begin{array}[t]{lcl}
    dz &:=& x_2^3 y_1y_2 - x_1x_2^2 y_1y_3 + x_1^2x_2 y_2y_3\\
       & +& x_1^{12} + x_2^{10}.
  \end{array}
  \]
\end{exa}

We will carry out the proofs in detail only for the dga~$A_1$ defined
in Example~\ref{exa:dga64} -- in fact, this is the most complicated of
the four examples and the other examples can be treated by analogous
arguments and calculations.

\subsection{The example dgas are Poincar\'e dgas}\label{subsec:poincare}
Recall that a minimal Sullivan algebra~$(\bigwedge V, d)$ is
called \emph{elliptic} if~$V$ and~$H^*(\bigwedge V,d)$ are finite
dimensional. 

\begin{prop}[Ellipticity]\label{prop:dgaelliptic}
  The dgas~$A_1$,~$A_2$,~$A_3$, and~$A_4$ are elliptic.
\end{prop}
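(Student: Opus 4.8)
The condition that $V$ be finite-dimensional is immediate for each $A_j$, since there are only the six generators $x_1,x_2,y_1,y_2,y_3,z$; so the entire content of the proposition is that the cohomology $H^*(A_j)$ is finite-dimensional. Following the convention announced just before Proposition~\ref{prop:dgaelliptic}, I would carry out the argument in detail only for $A_1$ and remark that the other three are identical.

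The plan is to reduce to the \emph{associated pure Sullivan algebra}. For a minimal Sullivan algebra $(\bigwedge V,d)$ with $\dim V < \infty$, let $d_\sigma$ be the derivation with $d_\sigma|_{V^{\text{even}}} = 0$ and $d_\sigma|_{V^{\text{odd}}}$ equal to the component of $d$ landing in $\bigwedge V^{\text{even}}$; then $(\bigwedge V, d_\sigma)$ is pure, and by the standard comparison between a Sullivan algebra and its associated pure algebra~\cite[Chapter~32]{felixhalperinthomas} the algebra $(\bigwedge V, d)$ is elliptic if and only if $(\bigwedge V, d_\sigma)$ is. For $A_1$ one reads off directly from Example~\ref{exa:dga64} that
\[ d_\sigma y_1 = x_1^3 x_2,\quad d_\sigma y_2 = x_1^2 x_2^2,\quad d_\sigma y_3 = x_1 x_2^3,\quad d_\sigma z = x_1^{18} + x_2^9, \]
because the summand $x_2^2 w$ of $dz$ involves the odd generators $y_i$ and is therefore discarded in passing to $d_\sigma$, whereas $x_1^{18}+x_2^9 \in \Q[x_1,x_2]$ survives.

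It then remains to show that the pure algebra $\bigl(\Q[x_1,x_2] \otimes \bigwedge(y_1,y_2,y_3,z),\, d_\sigma\bigr)$ has finite-dimensional cohomology. I would observe that this is precisely the Koszul complex of the sequence $P_1 = x_1^3 x_2$, $P_2 = x_1^2 x_2^2$, $P_3 = x_1 x_2^3$, $P_4 = x_1^{18} + x_2^9$ over the Noetherian ring $R = \Q[x_1,x_2]$ (graded by word length in the odd generators). Each of its homology modules is a finitely generated $R$-module annihilated by the ideal $I = (P_1,P_2,P_3,P_4)$, hence finite-dimensional over $\Q$ as soon as $R/I$ is; so it suffices to prove $\dim_\Q R/I < \infty$. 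By the Nullstellensatz this is equivalent to the $P_i$ having only the origin as common zero in $\overline{\Q}^2$, which is immediate: $P_1 = x_1^3 x_2 = 0$ forces $x_1 = 0$ or $x_2 = 0$; in the first case $P_4 = x_2^9 = 0$ gives $x_2 = 0$, and in the second $P_4 = x_1^{18} = 0$ gives $x_1 = 0$. Thus the common zero locus is $\{0\}$, the ideal $I$ is $\mathfrak m$-primary, $R/I$ is Artinian, and therefore $(\bigwedge V, d_\sigma)$ and hence $A_1$ is elliptic.

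For $A_2$, $A_3$, $A_4$ the same argument applies verbatim: in each case $d_\sigma z = x_1^{k_1} + x_2^{k_2}$ with $k_1, k_2 \in \N_{>0}$, and the vanishing of $x_1^3 x_2$ together with $x_1^{k_1} + x_2^{k_2}$ again pins the common zero locus to the origin. I expect the only delicate step to be the reduction to the pure algebra --- specifically, correctly identifying the polynomial (odd-word-length-zero) part of $dz$ and invoking the pure-comparison theorem of~\cite{felixhalperinthomas}; once that is in place, the finiteness of $\dim_\Q R/I$ is a routine Koszul-and-Nullstellensatz computation.
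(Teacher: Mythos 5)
Your proposal is correct, but it takes a genuinely different route from the paper. The paper's proof is elementary and self-contained: it observes that, since the odd generators are nilpotent and $d$ vanishes on $x_1, x_2$, it suffices to show $[x_1]$ and $[x_2]$ are nilpotent in cohomology, and then exhibits explicit coboundaries -- for $A_1$, the relation $x_1\,dz - x_2\,d(y_1y_2y_3) - d(x_2^6y_3) = x_1^{19}$ gives $[x_1]^{19}=0$, and a further manipulation with $w^2=0$ gives $[x_2]^{18}=0$. You instead pass to the associated pure Sullivan algebra $(\bigwedge V, d_\sigma)$, invoke the comparison theorem of~\cite{felixhalperinthomas} (Chapter~32; this is indeed a genuine result there, and your hypotheses -- $V$ finite dimensional, simply connected -- are satisfied), identify the pure algebra with the Koszul complex of $x_1^3x_2,\ x_1^2x_2^2,\ x_1x_2^3,\ x_1^{k_1}+x_2^{k_2}$ over $\Q[x_1,x_2]$, and finish with the standard annihilation property of Koszul homology plus the Nullstellensatz. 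All steps of this argument are sound, including the identification of the pure part of $dz$ (the terms of odd word length two are discarded) and the zero-locus computation. What each approach buys: yours is more conceptual and uniform -- it applies verbatim to every dga of the design pattern of Section~\ref{subsec:dgadesign}, since by construction the pure part of $dz$ is always $x_1^{k_1}+x_2^{k_2}$, and it isolates the real reason for ellipticity (the polynomial parts of the differential cut out only the origin); the paper's computation, by contrast, needs to be adapted to each example, but it requires no machinery beyond the definitions and, importantly, its explicit by-products are reused later: the vanishing $[x_1]^{32}=0$ established in this proof is cited again in the intersection-form computation (Proposition~\ref{prop:lambdaA1}) and in Proposition~\ref{prop:inflexibleandrationalbordism}, so your proof, if substituted, would leave those later references dangling and the nilpotency exponents would still have to be extracted separately.
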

\begin{proof}
  As these dgas are finitely generated by construction, it suffices to 
  show that their cohomology is finite dimensional. In other words, it 
  suffices to show that the cohomology is generated by nilpotent
  classes. Because the odd degree generators are nilpotent on the level of
  the dgas and because the differential is trivial on the even degree 
  generators, it suffices to show that the classes~$[x_1]$ and $[x_2]$
  are nilpotent. 

  We now show that $[x_1]$ and $[x_2]$ are nilpotent in~$H^*(A_1)$
  (the arguments for the other example dgas are similar): By
  definition of~$d$, we have 
  \begin{align*}
    [x_1]^{19} 
    & = [x_1^{19}]
      = \bigl[ x_1 dz - x_2 d(y_1y_2y_3) - x_1x_2^9 
        \bigr] 
      = \bigl[ d(x_1 z - x_2 y_1y_2y_3 - x_2y_3)
        \bigr]\\
    & = 0.
  \end{align*}
  Therefore we obtain
  \begin{align*}
    [x_2]^{18} 
    & = [x_2^9] \cdot [x_2^{9}] = \bigl([dz - x_2^2 w - x_1^{18}]\bigr)^2
      = \bigl( [x_2^2 w] - [x_1^{18}]
        \bigr)^2 \\
    & = \bigl[(x_2^2 w)^2\bigr] - 2 [x_1^{18}x_2^2 w] + [x_1]^{36}
      = 0 - 2 [d(x_1^{17}x_2 y_1y_2y_3)] + 0\\
    & = 0;
  \end{align*}
  notice that $w^2 = 0$ because every summand of~$w$ contains two of
  the three odd generators~$y_1$,~$y_2$,~$y_3$ and $y_j^2 = 0$.
\end{proof}

We will now select non-zero classes in the top cohomology, which will
play the r\^ole of fundamental classes:

\begin{prop}[Fundamental classes for~$A_1$, $A_2$, $A_3$, $A_4$]
  \label{prop:dgafcl}
  \hfil
  \begin{enumerate}
    \item The class~$[x_2]^{16}$ is non-zero in~$H^{64}(A_1)$.
    \item The class~$[x_2]^{18}$ is non-zero in~$H^{108}(A_2)$.
    \item The class~$[x_1]^{26}$ is non-zero in~$H^{208}(A_3)$.
    \item The class~$[x_2]^{19}$ is non-zero in~$H^{228}(A_4)$.
  \end{enumerate}
\end{prop}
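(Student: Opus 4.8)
The plan is to prove the statement for $A_1$ in full, the cases of $[x_2]^{18}$, $[x_1]^{26}$, $[x_2]^{19}$ for $A_2,A_3,A_4$ being entirely analogous, and to bootstrap the non-vanishing of the top class from a lower-degree class whose non-triviality is elementary. By Proposition~\ref{prop:dgaelliptic} the algebra $A_1$ is elliptic, so $H^*(A_1)$ is a Poincar\'e duality algebra \cite{felixhalperinthomas} whose formal dimension is $(|y_1|+|y_2|+|y_3|+|z|)-(|x_1|-1)-(|x_2|-1)=64$; in particular $H^{64}(A_1)\cong\Q$ and the cup-product pairing $H^{28}(A_1)\times H^{36}(A_1)\to H^{64}(A_1)$ is non-degenerate. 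It therefore suffices to produce one class in $H^{28}(A_1)$ that pairs non-trivially and to show that every such pairing lands in $\Q\cdot[x_2]^{16}$.

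First I would record that $w=x_2^2y_1y_2-x_1x_2y_1y_3+x_1^2y_2y_3$ is a cocycle: since $x_1x_2\,w=d(y_1y_2y_3)$ one has $x_1x_2\,dw=0$, and as multiplication by $x_1x_2$ is injective on the free graded-commutative algebra $\Q[x_1,x_2]\otimes\bigwedge(y_1,y_2,y_3,z)$ this forces $dw=0$. Next I claim $[w]\neq 0$ in $H^{28}(A_1)$. Because $|z|=35>27$ and $|y_1y_2y_3|=33>27$, every element of degree $27$ lies in $\bigwedge(x_1,x_2,y_1,y_2,y_3)$ and involves at most two of the $y_i$; since $d$ lowers the $y$-word-length by exactly one there, the word-length-two part of any degree-$28$ coboundary vanishes, whereas $w$ is a non-zero word-length-two element. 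Hence $w$ is not a coboundary.

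The relation I would then exploit comes from $dz=x_2^2w+x_1^{18}+x_2^9$: as $[dz]=0$ and $w$ is a cocycle, $[x_2^2w]=-[x_1^{18}]-[x_2^9]$ in $H^{36}(A_1)$. Multiplying by $[w]$ and using $w^2=0$ gives $[x_1^{18}w]=-[x_2^9w]$, while $[x_2^9w]=[x_2^7]\cdot[x_2^2w]=-[x_1^{18}x_2^7]-[x_2]^{16}=-[x_2]^{16}$ because $x_1^{18}x_2^7=d(x_1^{17}x_2^4y_3)$ is a coboundary. The crucial step is the inclusion $[w]\cdot H^{36}(A_1)\subseteq\Q\cdot[x_2]^{16}$. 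Here I would represent a class by a cocycle $b$ (no generator $z$ occurs in degree $36$), decompose $b$ by $y$-word-length as $b=b_0+b_1+b_2+b_3$ with $b_1=\sum_iP_iy_i$, and note that $w\cdot b$ has word-length at most three: its word-length-three part equals $(P_1x_1^2+P_2x_1x_2+P_3x_2^2)\,y_1y_2y_3$, and this polynomial vanishes precisely because the word-length-zero part of $db=0$ reads $x_1x_2(P_1x_1^2+P_2x_1x_2+P_3x_2^2)=0$. Thus $w\cdot b=b_0\,w$, and since $x_1^ax_2^bw=d(x_1^{a-1}x_2^{b-1}y_1y_2y_3)$ is a coboundary whenever $a,b\geq1$, among the even monomials of $b_0$ (those $x_1^ax_2^b$ with $a+2b=18$) only $x_1^{18}w$ and $x_2^9w$ can survive in $H^{64}(A_1)$, both equal to $\pm[x_2]^{16}$.

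Combining everything: if $[x_2]^{16}$ were zero, then $[w]\cdot H^{36}(A_1)=0$, contradicting the non-degeneracy of the pairing together with $[w]\neq 0$; hence $[x_2]^{16}\neq 0$, which is part~(1). Parts~(2)--(4) follow by repeating the argument after replacing $w$, the relation coming from $dz$, and the two surviving top monomials by the corresponding data for $A_2,A_3,A_4$. I expect the main obstacle to be establishing $[w]\cdot H^{36}(A_1)\subseteq\Q\cdot[x_2]^{16}$: the whole reduction rests on the vanishing of the word-length-three part (which is the cocycle condition in disguise) and on verifying that every mixed monomial $x_1^ax_2^bw$ with $a,b\geq1$ is a coboundary, so the bookkeeping of word-lengths is where care is required.
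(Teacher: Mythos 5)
Your proposal is correct, but it takes a genuinely different route from the paper's own proof. The paper argues directly and self-containedly: it assumes $x_2^{16} = du$, writes out the most general element~$u$ of degree~$63$ in the generators, and derives a contradiction by comparing coefficients (of~$x_2^{16}$, of~$z$, of the summands divisible by~$y_1y_2$ and~$y_1y_3$) together with a divisibility argument. You instead outsource the heavy lifting to the structure theory of elliptic Sullivan algebras: ellipticity (Proposition~\ref{prop:dgaelliptic}) plus the result of F\'elix--Halperin--Thomas~\cite[Proposition~38.3]{felixhalperinthomas} gives that $H^*(A_1)$ is a Poincar\'e duality algebra of formal dimension~$64$, and you reduce the claim to two hands-on computations: that $[w]\neq 0$ in~$H^{28}(A_1)$ (your word-length/parity argument is valid, since degree-$27$ elements have $y$-word-length exactly one and their differentials have word-length zero), and that $[w]\cdot H^{36}(A_1)\subseteq \Q\cdot[x_2]^{16}$ (via $x_1^ax_2^bw = d(x_1^{a-1}x_2^{b-1}y_1y_2y_3)$ for $a,b\geq 1$ and $x_1^{18}x_2^7 = d(x_1^{17}x_2^4y_3)$), so that non-degeneracy of the cup-product pairing forces $[x_2]^{16}\neq 0$. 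There is no circularity here: ellipticity is proved before Proposition~\ref{prop:dgafcl}, and Poincar\'e duality of elliptic algebras is exactly the external input the paper itself invokes immediately afterwards (Corollary~\ref{cor:poincaredga}); moreover your relations largely duplicate computations the paper needs anyway for the intersection form (Proposition~\ref{prop:lambdaA1}). What the paper's route buys is independence from the quoted duality theorem -- the non-vanishing is checked by bare hands; what your route buys is that one never manipulates a general potential primitive of the top class, and it exhibits explicit classes dual to~$[x_2]^{16}$. Two small remarks: in degree~$36$ your components $b_1$ and $b_3$ vanish for parity reasons alone, so the cocycle-condition step killing the word-length-three part of $w\cdot b$ is more general than needed; and for~$A_3$ the ``corresponding data'' involve a genuine role swap -- the quadratic part of~$dz$ is a power of $x_1$ times~$w$, the fundamental class is~$[x_1]^{26}$, and the coboundary trick uses $dy_1 = x_1^3x_2$ rather than~$dy_3$ -- which your closing sentence anticipates but does not spell out.
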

\begin{proof}
  We give the proof only for~$A_1$, the other cases being similar. 
  \emph{Assume} for a contradiction that~$[x_2^{16}] = 0$
  in~$H^{64}(A_1)$; hence, there is an element~$u$ of~$A_1$ of
  degree~$63$ with~$du = x_2^{16}$. We can write~$u$ as
  \[ u = p z + p_{12} y_1y_2 z + p_{13}y_1y_3z + p_{23}y_2y_3 z 
  + p_1 y_1 + p_2 y_2 + p_3 y_3,
  \]
  where $p$, $p_{12}$, $p_{13}$, $p_{23}$, $p_1$, $p_2$, $p_3$ are
  homogeneous polynomials in~$x_1$,~$x_2$.

  Then 
  \begin{align*}
        x_2^{16} = du 
    & = p x_2^9 + p x_1^{18} + p x_2^4 y_1y_2 - p x_1x_2^3y_1y_3 + p
    x_1^2x_2^2 y_2y_3\\
    & \quad + p_{12} d(y_1y_2)z + p_{12} x_2^9 y_1y_2 + p_{12}x_1^{18}y_1y_2+ 0 
    \\ 
    & \quad + p_{13} d(y_1y_3)z + p_{13} x_2^9 y_1y_3 + p_{13}x_1^{18}y_1y_3+ 0 
    \\ 
    & \quad + p_{23} d(y_2y_3)z + p_{23} x_2^9 y_2y_3 + p_{23}x_1^{18}y_2y_3+ 0 
    \\ 
    & \quad + q,
  \end{align*}
  where $q$ is a homogeneous polynomial in~$x_1$,~$x_2$ that is
  divisible by~$x_1x_2$; the zeroes at the end of the lines stem from
  the fact that squares of odd degree elements are zero and each
  summand of~$w$ contains two of the three odd degree
  generators~$y_1$, $y_2$,~$y_3$.

  Because $A_1$ is freely generated by~$x_1, \dots, z$, comparing the
  $x_2^{16}$-coefficients on both sides shows that $p\neq0$. Moreover,
  comparing the $z$-coefficients of both sides yields
  \[ p_{12} d(y_1y_2) + p_{13}d(y_1y_3) + p_{23}d(y_2y_3) =0. 
  \]
  Comparing the coefficients of $y_1$,~$y_2$,~$y_3$ in this equation
  gives us 
  \begin{align*}
    - x_1^2x_2^2 p_{12} & = x_1x_2^3 p_{13} \\
    x_1^3x_2 p_{12}     & = x_1x_2^3 p_{23} \\
    x_1^3x_2 p_{13}     & = - x_1^2x_2^2 p_{23}.
  \end{align*}
  Because $\deg p_{12} = 8$, $\deg p_{13} = 6$, and $\deg p_{23} = 4$,
  a simple divisibility argument shows that there is an~$\eta \in \Q$
  with 
  \[ p_{13} = - \eta \cdot x_1 x_2, \qquad 
     p_{23} = \eta \cdot x_1^2, \qquad
     p_{12} = \eta \cdot x_2^2.
  \]
  Hence, comparing the summands of~$du$ that are divisible
  by~$y_1y_2$, but not by~$z$, shows that
  \[ 0 = p x_2^4 + p_{12} x_2^9 + p_{12} x_1^{18} 
       = p x_2^4 + \eta \cdot x_2^{11} + \eta \cdot x_1^{18} x_2^2.
  \]
  Because $p \neq 0$, it follows that $\eta =0$ (otherwise the last
  summand is not divisible by~$x_2^4$). 
  On the other hand, by an analogous argument, we obtain
  \[ 0 = -p x_1x_2^3 + p_{13}x_2^9 + p_{13} x_1^{18}
       = -p x_1x_2^3 - \eta \cdot x_1 x_2^{10} - \eta \cdot x_1^{19} x_2,
  \]
  and thus~$p =0$, contradicting~$p \neq 0$. So $x_2^{16}$ cannot be a
  coboundary.
\end{proof}

\begin{cor}[The dgas~$A_1, \dots, A_4$ are Poincar\'e dgas]
  \label{cor:poincaredga}
  The dgas~$A_1, \dots, A_4$ are Poincar\'e dgas with the cohomology
  classes in Proposition~\ref{prop:dgafcl} as fundamental classes.
\end{cor}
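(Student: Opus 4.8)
The plan is to obtain all three defining conditions of a Poincar\'e dga (Definition~\ref{def:poincaredga}) simultaneously from the fact that the cohomology of an elliptic minimal Sullivan algebra is a Poincar\'e duality algebra. First I would record that each~$A_j$ is simply connected in the sense required by Definition~\ref{def:poincaredga}: in every one of the four examples the generator of lowest degree is~$x_1$, whose degree is~$2$,~$4$,~$8$, or~$10$ respectively, so there is no generator in degree~$1$ and hence $H^1(A_j) = 0$.

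Next I would combine the ellipticity established in Proposition~\ref{prop:dgaelliptic} with the classical theorem of rational homotopy theory that the cohomology~$H^*(\bigwedge V, d)$ of an elliptic minimal Sullivan algebra is a Poincar\'e duality algebra~\cite[Proposition~38.3]{felixhalperinthomas}. Concretely, this provides for each~$A_j$ a formal dimension~$n_j$ such that $H^k(A_j) = 0$ for all $k > n_j$, such that $H^{n_j}(A_j) \cong \Q$ is one-dimensional, and such that the multiplication pairing $H^k(A_j) \times H^{n_j-k}(A_j) \to H^{n_j}(A_j) \cong \Q$ is non-degenerate for every~$k$. These are precisely conditions~(1), (2), and~(3) of Definition~\ref{def:poincaredga}.

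It then remains to identify~$n_j$ and to check that the classes selected in Proposition~\ref{prop:dgafcl} generate the top cohomology. For the formal dimension I would invoke the dimension formula for elliptic Sullivan algebras already used in the proof of Proposition~\ref{prop:realisabilitypoincare}, namely $n_j = |y_1| + |y_2| + |y_3| + |z| - (|x_1|-1) - (|x_2|-1)$, which evaluates to~$64$,~$108$,~$208$, and~$228$ respectively. For the fundamental class, observe that the class~$[x_2]^{16}$ (respectively $[x_2]^{18}$, $[x_1]^{26}$, $[x_2]^{19}$) has degree exactly~$n_j$ and is non-zero by Proposition~\ref{prop:dgafcl}; since $H^{n_j}(A_j)$ is one-dimensional, any such non-zero class is automatically a generator and therefore a legitimate fundamental class.

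The only genuine mathematical content beyond this bookkeeping is the appeal to Poincar\'e duality for elliptic spaces, and I do not expect it to be an obstacle here: the two substantive inputs, ellipticity and the non-vanishing of the top class, have already been dispatched in Propositions~\ref{prop:dgaelliptic} and~\ref{prop:dgafcl}. The one point requiring a moment of care is the consistency between the formal-dimension formula and the degree of the chosen class, but since both give the same value~$n_j$ the chosen class lands exactly in the one-dimensional top cohomology, and the identification as a fundamental class is immediate.
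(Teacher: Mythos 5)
Your proposal is correct and follows essentially the same route as the paper: ellipticity from Proposition~\ref{prop:dgaelliptic}, the classical result~\cite[Proposition~38.3]{felixhalperinthomas} that cohomology algebras of elliptic minimal Sullivan algebras are Poincar\'e duality algebras, and the observation that the non-zero top-degree classes of Proposition~\ref{prop:dgafcl} may then serve as fundamental classes. The extra bookkeeping you include (simple connectivity of the~$A_j$ and the consistency check via the formal dimension formula) is left implicit in the paper but does not change the argument.
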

\begin{proof}
  The dgas $A_1, \dots, A_4$ are elliptic
  (Proposition~\ref{prop:dgaelliptic}). 
  By a classical result in rational homotopy
  theory~\cite[Proposition~38.3]{felixhalperinthomas}, cohomology
  algebras of elliptic minimal Sullivan algebras are Poincar\'e
  duality algebras; clearly, any non-zero cohomology class in the top
  cohomology can be chosen as fundamental class. 
\end{proof}

\subsection{The intersection forms of the example dgas}\label{subsec:intersectionform}

\begin{prop}[Intersection form of~$A_1$] \label{prop:lambdaA1}
  The classes~$[x_2w]$, $[x_1^2 w]$, $[x_1^{16}]$, and~$[x_2^8]$ form
  a $\Q$-basis of~$H^{32} (A_1)$ (the middle cohomology of~$A_1$), and
  the intersection form with respect to this basis and the fundamental
  class~$[x_2^{16}]$ of~$A_1$ (see Proposition~\ref{prop:dgafcl})
  looks as follows:
  \def\prflbl#1{%
    {\makebox[0pt][l]{\normalfont\scriptsize\color{red}\ref{dga#1}}}
  }
  \begin{align*}
    \begin{pmatrix}
      0^\prflbl{3} & 0^\prflbl{3} & 0^\prflbl{6} & -1^\prflbl{7}\;\; \\
      0^\prflbl{3} & 0^\prflbl{3} & 1^\prflbl{4} & 0^\prflbl{5} \\
      0^\prflbl{6} & 1^\prflbl{4} & 0^\prflbl{2} & 0^\prflbl{1} \\
     -1^\prflbl{7} & 0^\prflbl{5} & 0^\prflbl{1} & 1^\prflbl{0} 
    \end{pmatrix}
  \end{align*}
  (The superscripts in the matrix refer to the part of the proof where
  the corresponding coefficient is computed).
\end{prop}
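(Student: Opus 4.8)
The plan is to determine a basis of the middle cohomology $H^{32}(A_1)$ by dissecting the cochain complex in degrees $31$, $32$, $33$ according to word length in the odd generators $y_1,y_2,y_3$ (the generator $z$ is irrelevant here, since $|z|=35>33$), and then to evaluate the ten pairwise cup products by reducing each product modulo explicit coboundaries. The structural facts that make this feasible are: $w$ is a cocycle (a one-line check, already recorded in the proof of Proposition~\ref{prop:dgaelliptic}); every degree-$31$ monomial has the form $x_1^ax_2^by_i$ with $i\in\{1,2,3\}$, so its differential $x_1^ax_2^b\,dy_i$ is a pure even monomial; and the degree-$32$ component $A_1^{32}$ has only a word-length-$0$ (pure even) part and a word-length-$2$ part, with $d$ sending the latter into the word-length-one part of $A_1^{33}$.

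First I would compute $\dim_\Q H^{32}(A_1)=4$. Because all coboundaries into $A_1^{32}$ are pure even, $H^{32}(A_1)$ splits as the pure even cohomology together with the space of word-length-two cocycles. For the pure even part, the nine monomials $x_1^ax_2^b$ with $a+2b=16$ are all cocycles, while the coboundaries $d(x_1^ax_2^by_i)=x_1^ax_2^b\,dy_i$ span precisely the seven monomials with both exponents positive, namely $x_1^{14}x_2,\dots,x_1^2x_2^7$; hence this part is two dimensional with basis $[x_1^{16}],[x_2^8]$. For the word-length-two part there are no coboundaries, so its contribution is the kernel of $d$ on the ten-dimensional space of such monomials; writing out this differential and comparing coefficients of the degree-$33$ monomials $x_1^ax_2^by_i$ gives a linear system whose solution space is exactly $\langle x_2w,x_1^2w\rangle$. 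This yields the asserted basis.

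With the basis fixed, each cup product lands in $H^{64}(A_1)=\Q\cdot[x_2^{16}]$, and I would evaluate the eight distinct entries as follows. Isotropy of $\{[x_2w],[x_1^2w]\}$ is immediate from $w^2=0$; the entry $[x_1^{16}]^2=[x_1^{32}]=0$ uses the relation $[x_1]^{19}=0$ from Proposition~\ref{prop:dgaelliptic}; and $[x_2^8]^2=[x_2^{16}]$ is the fundamental class by definition. The three vanishing mixed entries exploit that $x_1x_2w=d(y_1y_2y_3)$ is exact, giving $[x_1^{16}x_2w]=[x_1^{15}]\,[x_1x_2w]=0$, $[x_1^2x_2^8w]=[x_1x_2^7]\,[x_1x_2w]=0$, and $[x_1^{16}x_2^8]=0$ since $x_1^{16}x_2^8=d(x_1^{13}x_2^7y_1)$. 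For the two nonzero off-diagonal entries I would use $z$: from $d(x_2^7z)=x_2^9w+x_1^{18}x_2^7+x_2^{16}$ together with $x_1^{18}x_2^7=d(x_1^{15}x_2^6y_1)$ one gets $[x_2w]\,[x_2^8]=[x_2^9w]=-[x_2^{16}]$, and since $w^2=0$ the identity $x_1^{18}w=(dz)\,w-x_2^9w$ with $(dz)\,w=d(zw)$ exact gives $[x_1^2w]\,[x_1^{16}]=[x_1^{18}w]=-[x_2^9w]=[x_2^{16}]$. Assembling these values produces the displayed matrix.

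I expect the main obstacle to be the dimension count rather than the products: solving the ten-variable system for the word-length-two cocycles, and verifying that the seven pure even coboundaries are linearly independent, is routine but the most laborious part of the argument. Once the exactness facts $w^2=0$, $x_1x_2w=d(y_1y_2y_3)$, $(dz)w=d(zw)$, and the monomial coboundaries $d(x_1^ax_2^by_1)=x_1^{a+3}x_2^{b+1}$ are in hand, every matrix entry collapses to a single reduction.
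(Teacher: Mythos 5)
Your proposal is correct and takes essentially the same route as the paper: the same word-length decomposition and divisibility/linear-system argument identifies the basis $[x_2w]$, $[x_1^2w]$, $[x_1^{16}]$, $[x_2^8]$, and the same exactness identities ($w^2=0$, $x_1x_2w=d(y_1y_2y_3)$, $(dz)w=d(zw)$, $d(x_2^7z)=x_2^9w+x_1^{18}x_2^7+x_2^{16}$, $x_1^{16}x_2^8=d(x_1^{13}x_2^7y_1)$) reduce every product to a multiple of $[x_2^{16}]$. One remark: your value $[x_1^2w]\cdot[x_1^{16}]=-[x_2^9w]=+[x_2^{16}]$ is the correct one and agrees with the stated matrix, whereas the paper's own computation of this entry (step~4 of its proof) concludes $-[x_2^{16}]$ owing to a sign slip when substituting for $x_2^9w$; so on this point your write-up is actually more consistent than the paper's.
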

\begin{proof}
  We first show that $H^{32}(A_1)$ is generated by~$[x_2 w]$, $[x_1^2
  w]$, $[x_1^{16}]$, and~$[x_2^8]$: What do cocycles of degree~$32$
  in~$A_1$ look like?  Clearly, $x_1^{16}$ and $x_2^8$ are cocycles of
  degree~$32$.  All cocycles in the subalgebra~$\bigwedge(x_1,x_2)$
  divisible by~$x_1x_2$ are in the image of~$d$ (by definition
  of~$dy_1$, $dy_2$, $dy_3$).  Because the differential is trivial
  on~$\bigwedge(x_1,x_2)$, it remains to look at cocycles of the form
  \[ u = p_{12} y_1y_2 + p_{13} y_1y_3 + p_{23} y_2y_3,
  \]
  where $p_{12}$,~$p_{13}$,~$p_{23} \in \bigwedge(x_1,x_2)$.
  If $du = 0$, then looking the coefficients of~$y_1$, $y_2$,
  and~$y_3$ respectively in~$du$ leads to 
  \begin{align*}
    x_1^3x_2 p_{12}   & =   x_1x_2^3 p_{23}\\
    x_1^2x_2^2 p_{12} & = - x_1x_2^3p_{13}\\
    x_1^3x_2p_{13}    & = - x_1^2x_2^2p_{23};
  \end{align*}
  As $\deg p_{12} = 12$, $\deg p_{13} = 10$, and~$\deg p_{23} = 8$, a
  simple divisibility consideration shows that there
  exist~$\eta_1$,~$\eta_2 \in \Q$ such that
  \begin{align*}
    p_{23} & = \eta_1 \cdot x_1^4 + \eta_2 \cdot x_1^2 x_2\\
    p_{12} & = \eta_1 \cdot x_1^2x_2^2 + \eta_2 \cdot x_2^3\\
    p_{13} & = -\eta_1 \cdot x_1^3x_2 - \eta_2 \cdot x_1 x_2^2;
  \end{align*}
  hence, $u = \eta_1 \cdot x_1^2 w + \eta_2 \cdot x_2 w$. So
  $H^{32}(A_1)$ indeed is generated as $\Q$-vector space by~$[x_2 w]$,
  $[x_1^2 w]$,$[x_1^{16}]$, and~$[x_2^8]$.

  As next step, we determine the corresponding matrix for the
  intersection form with respect to the fundamental class~$[x_2^{16}]$: 
  \begin{enumerate}
    \setcounter{enumi}{-1}
    \item\label{dga0} Because we chose~$[x_2^{16}]$ as fundamental
      class with respect to which the intersection form is computed,
      the matrix coefficient corresponding to column~$[x_2^8]$ and
      row~$[x_2^{8}]$ equals~$1$.
    \item\label{dga1} We have $[x_1^{16}] \cdot [x_2^8] = [d(x_1^{13}x_2^7 y_1)] =
      0$.
    \item\label{dga2} Moreover, $[x_1^{16}] \cdot [x_1^{16}] = [x_1^{32}] = 0$ as
      was shown in the proof of Proposition~\ref{prop:dgaelliptic}.
    \item\label{dga3}
      Squares of elements of~$A_1$ of odd degree are zero, and hence
      $w^2 =0$ (because each summand of~$w$ contains two of the three
      odd elements~$y_1$, $y_2$,~$y_3$). 
    \item\label{dga4}
      We have (because $(dz) w = 0 = ww$ as in the previous item)
      \begin{align*}
        [x_1^{16}]\cdot[ x_1^2 w] 
        & = [ (dz) \cdot w - x_2^9 w - x_2^2 w w]\\
        & = [ 0 - x_2^9 w - 0]\\
        & = - [ d(x_2^7 z) + x_2^{16} + x_2^7 x_1^{18}] \\
        & = - [x_2^{16}] + [d(x_1^{15}x_2^6 y_1)]\\
        & = - [x_2^{16}].
      \end{align*}
    \item\label{dga5}
      Moreover, $[x_2^{8}]\cdot[ x_1^2 w]
         = [d(x_2^7 x_1 y_1 y_2y_3)] = 0$.
    \item\label{dga6}
      Analogously, $[x_1^{16}] \cdot [x_2 w] = [d(x_1^{15}y_1y_2y_3)]
      = 0$.
    \item\label{dga7}
      Finally, 
      \begin{align*}
        [x_2^8] \cdot [x_2 w]
        & = [d (x_2^7 z) - x_1^{18}x_2^7 - x_2^{16}]\\
        & = [- d (x_1^{15} x_2^6 y_1) - x_2^{16}]\\
        & = - [x_2^{16}].
      \end{align*}
  \end{enumerate}

  Moreover, from the shape of this matrix we can easily deduce that
  the elements~$[x_2 w]$,
  $[x_1^2 w]$,$[x_1^{16}]$, and~$[x_2^8]$ are linearly independent
  over~$\Q$. 
\end{proof}

\begin{rem}[Intersection form of~$A_2$, $A_3$, $A_4$]
  Similarly to the previous proposition one can show that: 
  \begin{itemize}
    \item The classes~$[x_2^3y_1y_2 - x_1x_2^2 y_1y_3 + x_1^2x_2y_2y_3],
      [x_2]^8$ form a $\Q$-basis of the middle cohomology~$H^{54}(A_2)$
      of~$A_2$. The intersection form of~$A_2$ with respect to this
      basis and the fundamental class~$[x_2]^{18}$ of~$A_2$ is
      \[ \begin{pmatrix}
            0 & -1 \\
            -1 & 1
         \end{pmatrix}.
      \]
    \item The classes~$[x_1^2x_2^2 y_1 y_2 - x_1^3x_2
      y_1 y_3 + x_1^4 y_2y_3 ], [x_1]^{13}$ form a $\Q$-basis of the middle
      cohomology~$H^{104}(A_3)$ of~$A_3$. The intersection form
      of~$A_3$ with respect to this basis and the fundamental
      class~$[x_1]^{26}$ is
      \[\begin{pmatrix}
          0 & -1 \\
          -1 & 1
        \end{pmatrix}.
      \]
    \item The middle cohomology~$H^{114}(A_4)$ of~$A_4$ is zero.
  \end{itemize}
\end{rem}

\subsection{The example dgas are inflexible}\label{subsec:inflexibledga}
We now show that the four Poincar\'e dgas~$A_1$, $A_2$, $A_3$, and $A_4$ are
\emph{inflexible} in the sense that there is no dga morphism whose
induced homomorphism on cohomology maps the fundamental class to a
non-trivial multiple of itself.

\begin{prop}[Inflexibility]
  \label{prop:dgainflexible}
  The dgas~$A_1$, $A_2$, $A_3$, and $A_4$ are inflexible.
\end{prop}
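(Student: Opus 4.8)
The plan is to prove inflexibility of $A_1$ in detail (it being the most intricate of the four) and then to indicate that $A_2$, $A_3$, $A_4$ succumb to the same method. So let $f \colon A_1 \longrightarrow A_1$ be an arbitrary dga endomorphism; I want to show that the scalar $\deg f$ determined by $H^*(f)([A_1]) = \deg f \cdot [A_1]$ lies in $\{-1,0,1\}$. Since $f$ is a morphism of graded algebras it is determined by its values on generators, and these are heavily constrained by degree: the only element of degree $|x_1| = 2$ is a multiple of $x_1$, and the elements of degree $|x_2| = 4$ are spanned by $x_2$ and $x_1^2$. Hence $f(x_1) = a\,x_1$ and $f(x_2) = b\,x_2 + c\,x_1^2$ for some $a,b,c \in \Q$. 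Because $[A_1] = [x_2^{16}]$ (Proposition~\ref{prop:dgafcl}), the mapping degree is read off from $H^*(f)([x_2^{16}]) = [f(x_2)^{16}]$; once I know that $c = 0$ this equals $b^{16}$.

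First I would impose the compatibility $f \circ d = d \circ f$ on the odd generators $y_1, y_2, y_3$. Again by degree, $f(y_1)$ is a multiple of $y_1$, while $f(y_2)$ and $f(y_3)$ lie in the finite-dimensional spaces spanned by $y_2, x_1 y_1$ and by $y_3, x_2 y_1, x_1^2 y_1$ respectively. Writing out $d f(y_i) = f(d y_i)$ and comparing coefficients of monomials in the free graded-commutative algebra determines the scalars in $f(y_i)$ and produces the single crucial relation $a\,c = 0$ (it drops out of the $x_1^6$-coefficient of the $y_2$-equation, which forces $a^2 c^2 = 0$). This splits the analysis into the case $c = 0$ and the degenerate case $a = 0$.

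The heart of the argument is the compatibility on $z$. Since $A_1$ is free, an extension $f(z)$ with $d f(z) = f(dz)$ exists if and only if $f(dz)$ is a coboundary, i.e.\ $[f(dz)] = 0$ in $H^{36}(A_1)$. Using the identity $x_1 x_2\,w = d(y_1 y_2 y_3)$ one checks that $w$ is a cocycle and, when $c = 0$, that $f(w) = a^5 b^5\, w$, whence $f(dz) = a^5 b^7\, x_2^2 w + a^{18}\, x_1^{18} + b^9\, x_2^9$. As $[dz] = [x_2^2 w] + [x_1^{18}] + [x_2^9]$ is zero by construction, subtracting $a^5 b^7\,[dz]$ leaves $(a^{18} - a^5 b^7)[x_1^{18}] + (b^9 - a^5 b^7)[x_2^9] = 0$. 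The cohomological input I need is that $[x_1^{18}]$ and $[x_2^9]$ are \emph{linearly independent} in $H^{36}(A_1)$: that $[x_2^9] \neq 0$ and is not proportional to $[x_1^{18}]$ follows from $[x_2^9]\cdot[x_2^7] = [x_2^{16}] \neq 0$ together with $[x_1^{18}]\cdot[x_2^7] = [d(x_1^{15} x_2^6 y_1)] = 0$, and that $[x_1^{18}] \neq 0$ follows from a ``no coboundary'' computation exactly in the spirit of Proposition~\ref{prop:dgafcl}. Independence then yields the two equations $a^{18} = a^5 b^7$ and $b^9 = a^5 b^7$.

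Finally I would solve this system. If $a, b \neq 0$ they give $a^{13} = b^7$ and $a^5 = b^2$, whence $a^{26} = b^{14} = a^{35}$ and so $a^9 = 1$, forcing $a = 1$ and $b = \pm 1$; thus $\deg f = b^{16} = 1$. The degenerate branches ($a = 0$, which via the same $z$-computation forces $b = c = 0$, and $b = 0$) give $\deg f = 0$. In every case $\deg f \in \{0,1\} \subset \{-1,0,1\}$, so $A_1$ is inflexible. The identical scheme then applies to $A_2, A_3, A_4$: the generators again leave only $f(x_1) = a\,x_1$ and $f(x_2) = b\,x_2 + c\,x_1^2$, the $y$-relations again yield $a c = 0$, and the two pure-power summands $x_1^{k_1}, x_2^{k_2}$ of $dz$ again force two independent monomial equations pinning the scaling parameter to a root of unity. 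I expect the main obstacle to be precisely this degree-$36$ independence statement (and its analogues): it is the step ruling out an a priori one-parameter family of solutions that would render the algebra flexible, and establishing $[x_1^{18}] \neq 0$ demands the same careful coboundary bookkeeping as the fundamental-class computation.
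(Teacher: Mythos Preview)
Your argument is correct and follows the same overall architecture as the paper: determine $f$ on generators by degree, extract $ac=0$ from the $y_i$-compatibilities, and then use the $z$-equation to force the scaling to be~$0$ or a root of unity. The one methodological difference lies in how the $z$-equation is exploited. You pass to cohomology, argue that $[f(dz)]=0$, and then need the linear independence of $[x_1^{18}]$ and $[x_2^9]$ in $H^{36}(A_1)$---the lemma you correctly flag as the main obstacle. The paper instead writes $f(z)=\gamma z+\gamma_1\,x_1y_1y_2y_3+p_1y_1+p_2y_2+p_3y_3$ explicitly and compares monomial coefficients \emph{directly in the free algebra}, reading off $\gamma=\alpha_2^9$, $\gamma=\alpha_1^{18}+\alpha_{2,1}^9$, and $\gamma=\alpha_1^5\alpha_2^7$ from the coefficients of $x_2^9$, $x_1^{18}$, and $x_2^4y_1y_2$ respectively; this yields the same system without any cohomological input. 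Your route is slightly more conceptual, the paper's slightly more economical. One small inaccuracy: for $A_2$, $A_3$, $A_4$ the degrees satisfy $2|x_1|\neq|x_2|$, so there is no $c$-term in $f(x_2)$ at all and the argument is strictly easier---this is precisely what the paper means by ``the degrees of the even generators $x_1$ and $x_2$ are less entangled.''
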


\begin{proof}
  We will give the complete calculation only for the example~$A_1$; for
  the other dgas the calculation is similar, and even a bit simpler as
  the degrees of the even generators~$x_1$ and~$x_2$ are less
  entangled; moreover, for the dgas~$A_3$ and $A_4$ an argument is
  contained in the work of Arkowitz and Lupton~\cite[Examples~5.1
  and~5.2]{arkowitzlupton}. 

  Let $f \colon A_1 \longrightarrow A_1$ be a dga morphism; looking at
  the degrees of the generators of~$A_1$ we see that there
  are constants 
  $\alpha_1$, $\alpha_2$, $\alpha_{2,1}, \dots, \gamma$,~$\gamma_1 \in \Q$
  and homogenous polynomials~$p_1$,~$p_2$,~$p_3$ in~$x_1$,~$x_2$
  such that
  \begin{align*}
    f(x_1) & = \alpha_1 \cdot x_1 \\
    f(x_2) & = \alpha_2 \cdot x_2 + \alpha_{2,1} \cdot x_1^2\\
    f(y_1) & = \beta_1 \cdot y_1 \\
    f(y_2) & = \beta_2 \cdot y_2 + \beta_{2,1} \cdot x_1 y_1\\
    f(y_3) & = \beta_3 \cdot y_3 
             + \beta_{3,1} \cdot x_1^2 y_1 
             + \beta_{3,2} \cdot x_2 y_1
             + \beta_{3,3} \cdot x_1 y_2\\
    f(z)   & = \gamma \cdot z + \gamma_1 \cdot x_1 y_1 y_2 y_3 
             + p_1 y_2 + p_2 y_2 + p_3 y_3.
  \end{align*}
  Using that $f$ as a dga morphism is compatible with the
  differential~$d$ of~$A_1$ and that $A_1$ is freely generated
  by~$x_1, \dots, z$, we deduce constraints on the
  coefficients~$\alpha_1, \dots$. Notice that because we
  chose~$[x_2^{16}]$ as fundamental class of~$A_1$, we can read off the
  ``degree'' of~$f$ from the coefficient~$\alpha_2$, and it suffices
  to show that $\alpha_2 \in \{-1,0,1\}$.
  \begin{enumerate}
    \item\label{inflexy1} 
      Comparing the coefficients for~$f \circ d(y_1)$ and~$d \circ
      f(y_1)$, we obtain
      \[ \beta_1 = \alpha_1^3 \alpha_2
      \]
      and $\alpha_1^3 \alpha_{2,1} = 0.$ In particular, $\alpha_1 = 0$
      or~$\alpha_{2,1} =0$. 
    \item\label{inflexy2} 
      Comparing the coefficients for~$f \circ d(y_2)$ and~$d \circ
      f(y_2)$, we obtain in addition that
      \[ \beta_2 = \alpha_1^2 \alpha_2^2. 
      \]
    \item\label{inflexz}
      Moreover, we have 
      \begin{align*} 
        f \circ d (z)
        & 
          = \alpha_1^{18} \cdot x_1^{18}
            + (\alpha_2 \cdot x_2 + \alpha_{2,1} \cdot x_1^2)^9
            + \alpha_2^4 \beta_1\beta_2 \cdot x_2^4 y_1y_2
            + q, 
        \\
        d \circ f(z)
        & = \gamma \cdot x_1^{18} + \gamma \cdot x_2^9 + \gamma \cdot
        d(x_2 w) + \gamma_1 \cdot d(x_1 y_1y_2y_3)
        ,
      \end{align*}
      where $q \in (x_1x_2) \cdot A_1$. Comparing the coefficients of
      these elements shows that 
      \[ \alpha_1^{18} + \alpha_{2,1}^9 = \gamma = \alpha_2^9. 
      \]
      Because~$\gamma \cdot d(x_2w) + \gamma_1 \cdot d(x_1y_1y_2y_3)$
      and~$q$ are divisible by~$x_1x_2$, it follows that
      \[ \gamma = \alpha_2^4 \beta_1 \beta_2 = \alpha_2^7 \alpha_1^5 
      \]
      (in the second equation we used the results from
      Steps~\ref{inflexy1} and~\ref{inflexy2}).
  \end{enumerate}
  
  In view of Step~\ref{inflexy1} we can assume that $\alpha_1 =0$
  or~$\alpha_{2,1} =0$.  If $\alpha_1 =0$, then also $\alpha_2^9 =
  \gamma = \alpha_2^7 \alpha_1^5 =0$ by Step~\ref{inflexz}. On the other hand, if
  $\alpha_1 \neq 0$ and $\alpha_{2,1} = 0$, then 
  \[ \alpha_1^{18} = \gamma = \alpha_2^9
     \qquad
     \text{and}
     \qquad
     \alpha_2^7 \alpha_1^5 = \gamma = \alpha_2^9
  \]
  by Step~\ref{inflexz}. Now a small computation shows that $\alpha_2
  =1$. 
  Hence, $A_1$ is inflexible.
\end{proof}

\section{Appendix~II: more inflexible dgas and manifolds} \label{app:II}

In this appendix we produce more examples of inflexible manifolds from
the basic examples of Section~\ref{sec:(in)flexiblemanifolds}
and~\ref{app:I}: Using connected sums and products, we obtain in
infinitely many dimensions infinitely many homotopy types of oriented
closed simply connected inflexible manifolds (Section~\ref{subsec:sum}
and Section~\ref{subsec:product}). Moreover, we show that
inflexibility is ``generic'' in the sense that in infinitely many
dimensions, every oriented bordism class can be rationally represented
by a simply connected inflexible manifold
(Section~\ref{subsec:moreinflexiblemanifolds}).

Recall that if $(A,[A])$ is a Poincar\'e dga
(Definition~\ref{def:poincaredga}) then $\realmfd{A,[A]}$ denotes the
class of all oriented closed simply connected manifolds that have
trivial total Pontryagin class and realise this rational data
(Definiiton~\ref{def:realmfd}). 

\subsection{Inflexible connected sums}\label{subsec:sum}

In general, it is not clear that connected sums of inflexible
manifolds are inflexible; however, in certain cases inflexibility is
preserved under connected sums:

\begin{thm}[Inflexible connected sums]\label{thm:inflexiblesums}
  Let $M$ be an oriented closed simply connected $n$-manifold with
  inflexible minimal model and $\pi_{n-1}(M) \otimes \Q = 0$.  Suppose
  that $N_1, \dots, N_r$ are oriented closed simply connected $n$-manifolds
  such that $\deg(N_{j, \Q}, M_\Q)$ is finite for each $j \in
  \{1,\dots,r\}$. Then the iterated connected sum
  \[ M \csum N_1 \csum \cdots \csum N_r \] 
  is inflexible.  In particular, for all~$r \in \N$ the $r$-fold
  connected sum $M^{\# r}$ is inflexible.
\end{thm}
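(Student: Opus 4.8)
The plan is to bound the degrees of self-maps of $W := M \csum N_1 \csum \cdots \csum N_r$ uniformly, and then invoke the observation recorded in Section~\ref{sec:(in)flexiblemanifolds} (and again after Definition~\ref{def:stronglyinflexible}) that an oriented closed connected manifold is inflexible precisely when its set of self-mapping degrees is finite: if some self-map had degree $d$ with $|d|\ge 2$, its iterates would realise the unbounded set $\{|d|^k \mid k\in\N\}$. Thus it suffices to produce one constant $B$ with $|\deg g|\le B$ for a whole family of self-maps, and then apply this to all iterates $f^k$ of a given $f$. First I would fix the geometric decomposition: arranging the summands in ``star'' position, write $W = M_0\cup\bigcup_{j=1}^r N_{j,0}$, where $M_0$ is $M$ with $r$ disjoint open disks removed and $N_{j,0}=N_j$ minus an open disk, the pieces being glued along separating spheres $S^{n-1}$. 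Collapsing all these spheres gives a degree-one pinch map $q\colon W\to M\vee N_1\vee\cdots\vee N_r$, and composing with the wedge projections yields collapse maps $c_M\colon W\to M$ and $c_{N_j}\colon W\to N_j$, each of degree $1$.

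Now fix a self-map $f\colon W\to W$ of degree $d$ and, for $k\in\N$, set $g_k:=c_M\circ f^k\colon W\to M$, so that $\deg g_k = \deg(c_M)\cdot(\deg f)^k = d^k$. The heart of the argument is a decomposition of this degree over the pieces. Restricting $g_k$ to $M_0$ and to each $N_{j,0}$ produces maps into $M$, and the hypothesis $\pi_{n-1}(M)\otimes\Q=0$ lets me cap these off rationally: the obstruction to extending the restriction $S^{n-1}\to M$ of $g_k$ over the adjacent filling $n$-disk lies in $\pi_{n-1}(M)\otimes\Q=0$. This yields a rational self-map $\bar g_k\colon M\to M$ extending $g_k|_{M_0}$ and rational maps $h_{k,j}\colon N_j\to M$ extending $g_k|_{N_{j,0}}$; assembling these over the wedge gives $G_k\colon M\vee N_1\vee\cdots\vee N_r\to M$ agreeing with $g_k$ on every piece, whence $g_k\simeq_\Q G_k\circ q$. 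Pulling the fundamental cohomology class $\mu_M$ back through $G_k$ and then through $q$ (which sends the top class of each wedge summand to the top class of $W$) produces the additivity
\[ d^k = \deg\bar g_k + \sum_{j=1}^r \deg h_{k,j}. \]

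It then remains to bound each term independently of $k$. Because $M$ has inflexible minimal model, every self-map of the rationalisation $M_\Q$ has degree in $\{-1,0,1\}$ (Definition~\ref{def:inflexibleclass} together with the minimal-model correspondence), so $|\deg\bar g_k|\le 1$. Because $\deg(N_{j,\Q},M_\Q)$ is finite, there is a constant $B_j<\infty$ with $|\deg h_{k,j}|\le B_j$ for the rational map $h_{k,j}\colon N_j\to M$. Hence $|d^k|\le 1+\sum_{j=1}^r B_j=:B$ for all $k$, which forces $|d|\le 1$; thus $\deg(W,W)\subset\{-1,0,1\}$ and $W$ is inflexible. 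The final assertion then follows by taking $N_1=\cdots=N_r=M$: in that case $\deg(M_\Q,M_\Q)$ is finite by inflexibility of the minimal model of $M$, and $\pi_{n-1}(M)\otimes\Q=0$ is assumed, so $M^{\csum r}$ is inflexible.

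The step I expect to be the main obstacle is making the degree-additivity rigorous: converting the relative mapping degrees of $g_k$ over the pieces into honest degrees of the capped-off closed maps, and justifying $g_k\simeq_\Q G_k\circ q$ (equivalently, evaluating $g_k^*\mu_M$ on $[W]$ piece by piece). This is exactly where $\pi_{n-1}(M)\otimes\Q=0$ is indispensable, both to construct the extensions $\bar g_k,h_{k,j}$ and to kill the obstruction to gluing the local homotopies. I would expect it to be cleanest to carry out the evaluation directly on the level of the relative fundamental classes $[M_0]$ and $[N_{j,0}]$ summing to $[W]$, rather than constructing a global rational homotopy, so that only the extendability of the restrictions (guaranteed by the vanishing of $\pi_{n-1}(M)\otimes\Q$) is needed and no further gluing obstruction in the rational homotopy of $M$ near degree $n$ has to be controlled.
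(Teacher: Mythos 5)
Your proposal is correct and takes essentially the same route as the paper: a degree-one collapse map onto~$M$, degree additivity over the summands obtained from $\pi_{n-1}(M)\otimes\Q=0$ by factoring maps rationally through the wedge $M\vee N_1\vee\cdots\vee N_r$, and bounds on the summand degrees coming from the inflexible minimal model of~$M$ and the finiteness of the sets~$\deg(N_{j,\Q},M_\Q)$. The step you single out as the main obstacle is handled in the paper exactly as you anticipate -- via the cofibration sequence $S^{n-1}\to N_1\csum N_2\to N_1\vee N_2$ and the exact sequence of rationalised mapping sets (Lemma~\ref{lem:connect-sum-degree}, applied inductively two summands at a time), rather than by evaluating against relative fundamental classes -- and your iterates~$f^k$ are replaced by the equivalent observation that precomposition with the degree-one collapse map gives $\deg(W,W)\subset\deg(W,M)$.
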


The proof of this theorem relies on applying repeatedly the following
lemma: Recall that $\deg(N, M)$
is the set of degrees of maps between oriented closed connected
manifolds~$N$ and~$M$; also, for subsets~$A$,~$B \subset \Z$ 
we write
\[ A + B :=  \{ a + b \mid a \in A,\ b \in B \} \subset \Z . 
\]

\begin{lem} \label{lem:connect-sum-degree}
  Let $N_1, N_2$ and $M$ be oriented closed simply connected
  $n$-manifolds with rationalisations $N_{1, \Q}$, $N_{2,\Q} $, and
  $M_\Q$.  If $\pi_{n-1}(M_\Q) = 0$ then
  \[ \deg(N_1 \csum N_2, M) \subset \deg(N_{1, \Q}, M_\Q) + \deg(N_{2, \Q}, M_\Q) \, . \]
\end{lem}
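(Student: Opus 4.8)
The plan is to analyse an arbitrary continuous map $f\colon N_1 \csum N_2 \to M$ by cutting the connected sum along its neck sphere and capping off each half, after first passing to the rationalisation of the target. Write $X := N_1 \csum N_2 = A_1 \cup_{S^{n-1}} A_2$, where $A_i := N_i \setminus \mathring D^n$ is $N_i$ with an open $n$-disc removed and $S^{n-1}$ is the common neck sphere. Let $\rho\colon M \to M_\Q$ be the rationalisation and set $\tilde f := \rho \circ f$. Since $\rho$ induces an isomorphism on rational homology carrying $\fclq M$ to the fundamental class $[M_\Q]$, the maps $f$ and $\tilde f$ have the same degree; so it suffices to split $\deg \tilde f$ as a sum of rational mapping degrees.

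First I would kill the neck. The restriction $g := \tilde f|_{S^{n-1}}\colon S^{n-1}\to M_\Q$ represents a class in $\pi_{n-1}(M_\Q)$, which vanishes by hypothesis; hence $g$ is nullhomotopic. Fixing \emph{one} nullhomotopy $H\colon D^n \to M_\Q$ with $H|_{\partial D^n}=g$, I would use $H$ to extend $\tilde f|_{A_i}$ across the capping disc $D_i$ in $N_i = A_i \cup_{S^{n-1}} D_i$, obtaining maps $\bar f_i\colon N_i \to M_\Q$ with $\bar f_i|_{A_i}=\tilde f|_{A_i}$ and $\bar f_i|_{D_i}=H$ for $i\in\{1,2\}$. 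Because $N_i$ is simply connected and $M_\Q$ is a rational space, the universal property of rationalisation \cite[Chapter~17]{felixhalperinthomas} factors $\bar f_i$ up to homotopy through $\ell_i\colon N_i \to N_{i,\Q}$ as $\bar f_i \simeq \phi_i \circ \ell_i$ for some $\phi_i\colon N_{i,\Q}\to M_\Q$. As $(\ell_i)_*$ carries $\fclq{N_i}$ to $[N_{i,\Q}]$, we get $\deg \phi_i = \deg \bar f_i$, and hence $\deg \bar f_i \in \deg(N_{i,\Q},M_\Q)$.

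It then remains to prove additivity, $\deg \tilde f = \deg \bar f_1 + \deg \bar f_2$, which I expect to be the main obstacle. Here I would argue at the level of fundamental cycles: orienting everything compatibly, choose a fundamental cycle $z_1 + z_2$ of $X$ with each $z_i$ a relative fundamental cycle of $(A_i,S^{n-1})$ and $\partial z_1 = -\partial z_2 = w$ a fundamental cycle of the neck sphere; capping gives fundamental cycles $\zeta_i = z_i + e_i$ of $N_i$, where $e_i$ is a fundamental chain of $D_i$ with $\partial e_1 = -\partial e_2 = -w$. Since $\bar f_i$ agrees with $\tilde f$ on $A_i$, pushing forward and comparing yields
\[ (\bar f_1)_*\fclq{N_1} + (\bar f_2)_*\fclq{N_2} - \tilde f_*\fclq{X} = \bigl[(\bar f_1)_* e_1 + (\bar f_2)_* e_2\bigr] \in H_n(M_\Q;\Q). \]
Because the two discs glue to a sphere $D_1 \cup_{S^{n-1}} D_2 \cong S^n$, this correction term is $h_*[S^n]$ for the map $h\colon S^n \to M_\Q$ that is $\bar f_1|_{D_1}$ on one hemisphere and $\bar f_2|_{D_2}$ on the other, i.e.\ a Hurewicz image of a class in $\pi_n(M_\Q)$ that in general need not vanish. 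The crucial point is that, having capped both halves with the \emph{same} nullhomotopy $H$, the map $h$ is the fold of $H$ across the equator: both hemispheres carry $H$ with opposite orientations, so $(\bar f_1)_* e_1 + (\bar f_2)_* e_2$ is null and the correction term is zero. Reading off the coefficient of $[M_\Q]$ then gives $\deg f = \deg \bar f_1 + \deg \bar f_2 \in \deg(N_{1,\Q},M_\Q) + \deg(N_{2,\Q},M_\Q)$, which proves the asserted inclusion.
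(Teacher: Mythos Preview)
Your argument is correct and takes a genuinely different route from the paper's proof. The paper works homotopy-theoretically: it uses the cofibration sequence
\[
  S^{n-1} \to N_1 \csum N_2 \to W,
  \qquad W := (N_1 \csum N_2) \cup_{S^{n-1}} D^n \simeq N_1 \vee N_2,
\]
rationalises it, and observes that $[S^{n-1}_\Q, M_\Q] \cong \pi_{n-1}(M_\Q) = 0$ forces every map $(N_1 \csum N_2)_\Q \to M_\Q$ to factor through $W_\Q \simeq N_{1,\Q} \vee N_{2,\Q}$. Since maps out of a wedge are pairs of maps out of the summands, and the collapse $N_1 \csum N_2 \to N_1 \vee N_2$ sends the fundamental class to the sum of the fundamental classes, additivity of degrees is immediate.

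You instead stay on the level of the connected sum itself: rationalise only the target, null-homotope the neck map, cap both halves with the \emph{same} nullhomotopy~$H$, and then verify additivity by a chain-level computation. The crucial point, which you identify correctly, is that the correction term $H_* e_1 + H_* e_2 = H_*(e_1 + e_2)$ is the image under~$H$ of a cycle in the contractible disc~$D^n$, hence a boundary. Your approach is more elementary in that it avoids invoking exactness of the rationalised cofibration sequence and the identification $W_\Q \simeq N_{1,\Q} \vee N_{2,\Q}$; the paper's approach is more conceptual, packaging the same cancellation into the wedge decomposition so that additivity needs no separate argument. Both proofs ultimately exploit the vanishing of~$\pi_{n-1}(M_\Q)$ in the same way, just at different levels of abstraction.
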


\begin{proof}
  Because rationalisation preserves rational cohomology, we have
  \[ \deg(N_1 \csum N_2, M) \subset \deg\bigl( (N_1 \csum N_2)_\Q , M_\Q \bigr); 
  \]
  so it suffices to show $\deg((N_1 \csum N_2)_\Q, M_\Q)
  \subset \deg(N_{1,\Q}, M_\Q) + \deg(N_{2,\Q}, M_\Q)$. To this end, we consider
  the cofibration sequence
  \begin{equation}\tag{$\mathbin{*}$} \label{eq:cofib}
    S^{n-1} \to N_1 \csum N_2 \to (N_1 \csum N_2) \cup_{S^{n-1}} D^n, 
  \end{equation}
  where we attach~$D^n$ along the inclusion~$i \colon S^{n-1} \to N_1
  \csum N_2$ where $S^{n-1}$ is the locus of the connected sum
  between~$N_1$ and~$N_2$.  Clearly, the space $W := ((N_1 \csum N_2)
  \cup_{S^{n-1}} D^n)$ is homotopic to the wedge $N_1 \vee N_2$: we
  will use this fact below.  From the cofibration
  sequence~\eqref{eq:cofib} and its rationalisation we obtain the
  following commutative diagram of exact sequences:
  \begin{equation}\tag{$\mathbin{**}$} \label{eq:Qconnectedsum}
    \xymatrix{
        [W, M] \ar[r] \ar[d]^{\cdot_\Q} 
      & [N_1 \csum N_2, M] \ar[r]^{} \ar[d]^{\cdot_\Q} 
      & [S^{n-1}, M] \ar[d]^{\cdot_\Q}
      \\
        [W_\Q, M_\Q] \ar[r] 
      & [(N_1 \csum N_2)_\Q, M_\Q] \ar[r]^{} 
      & [S^{n-1}_\Q, M_\Q] }
  \end{equation}
  The lower sequence can be seen to be exact by looking at a concrete
  description of Sullivan models of cell additions (up to
  quasi-isomorphism)~\cite[Diagram~13.15]{felixhalperinthomas}.

  But $[S^{n-1}_\Q, M_\Q] \cong \pi_{n-1}(M_\Q) = 0$ by assumption.
  Thus, up to homotopy every map from the connected sum $(N_1 \csum
  N_2)_\Q \to M_\Q$ factors through the map~$(N_1 \csum N_2)_\Q \to
  W_\Q$ induced by the inclusion.  We observed above that there is a
  homotopy equivalence $W \simeq N_1 \vee N_2$. The characterisation
  of rationalisations in terms of singular homology with integral
  coefficients~\cite[Theorems~9.3 and~9.6]{felixhalperinthomas}
  together with the Mayer-Vietoris sequence in homology show that
  $W_\Q \simeq N_{1,\Q} \vee N_{2,\Q}$.  Hence we have the equality
  \[ [W_\Q, M_\Q] = [N_{1,\Q}, M_\Q] \times [N_{2, \Q}, M_\Q]. 
  \] 
  So from the commutative diagram~\eqref{eq:Qconnectedsum} above we
  see that there is an inclusion~$\deg((N_1 \csum N_2)_\Q, M_\Q)
  )\subset \deg(N_{1, \Q}, M_\Q) + \deg(N_{2, \Q}, M_\Q)$, as desired.
\end{proof}

\begin{proof}[Proof of of Theorem \ref{thm:inflexiblesums}]
  For $N := M \csum N_1 \csum \cdots \csum N_r$, observe that the
  obvious collapse map~$N \to M$ has degree~$1$ and so $1 \in \deg(N,
  M)$.  Applying Lemma \ref{lem:connect-sum-degree} inductively we
  conclude that $\deg(N, M)$ is finite, since we have assumed that the
  sets $\deg(N_{j, \Q}, M_\Q)$ and $\deg(M_\Q, M_\Q)$ are finite.  But
  the monoid $\Map(N, N)$ of self maps of $N$ acts by pre-composition
  on the set $\Map(N, M)$ of maps from $N$ to $M$ and since $1 \in
  \deg(N, M)$ we see that there is an inclusion $\deg(N, N) \subset
  \deg(N, M)$.  Hence $\deg(N, N)$ is finite and $N$ is inflexible.
\end{proof}

In order to apply Theorem~\ref{thm:inflexiblesums} to our examples we
shall need information about the group~$\pi_{\dim M-1}(M) \tensor \Q$
for~$M \in \realmfd{A_j, [A_j]}$ with $j = \{1, \dots , 4\}$, where
$A_1,\dots,A_4$ are the dgas from Section~\ref{app:I}.  Recall that
$\pi_*(M) \otimes \Q$ is a $\Q$-vector space generated by the
indecomposable elements of the minimal model
of~$M$~\cite[Theorem~15.11]{felixhalperinthomas}.  Using the notation
of Section \ref{subsec:dgadesign}, it follows that
\[ \pi_*(M) \tensor \Q 
   \cong        \Q(x_1) \oplus \Q(x_2) 
         \oplus \Q(y_1) \oplus \Q(y_2) \oplus \Q(y_3) 
         \oplus \Q(z). 
\]
The degrees of the generators $x_1$,~$x_2$, $y_1$, $y_2$, $y_3$,~$z$
for each of the~$A_j$'s are listed in
Section~\ref{subsec:dgadesign}. In particular we obtain:

\begin{lem} \label{lem:homotopygroups} 
  For all $j \in \{1, \dots ,
  4\}$ and for all $M \in \realmfd{A_j, [A_j]}$, we have that
  $\pi_{n-1}(M) \tensor \Q = 0$ where $n$ is the dimension of $M$.
\end{lem}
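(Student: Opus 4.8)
The plan is to reduce the claim to a direct inspection of the degrees of the generators of the minimal models $A_1, \dots, A_4$. By \cite[Theorem~15.11]{felixhalperinthomas}, for any $M \in \realmfd{A_j, [A_j]}$ the rational homotopy $\pi_*(M) \tensor \Q$ is naturally isomorphic to the graded vector space of indecomposables of the minimal model $A_j$; as recorded just above, this space is spanned by the six generators $x_1, x_2, y_1, y_2, y_3, z$, one in each of the degrees listed in Section~\ref{subsec:dgadesign}. In particular $\pi_{n-1}(M) \tensor \Q$ is the span of those generators of degree $n-1$, so it suffices to verify that no generator of $A_j$ lies in degree $n_j - 1$, where $n_j$ denotes the formal dimension of $A_j$.

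This is now a finite check against the degree data. In each of the four cases the largest generator degree is $|z|$, and one reads off $|z| = 35 < 63 = n_1 - 1$ for $A_1$, $|z| = 59 < 107 = n_2 - 1$ for $A_2$, $|z| = 119 < 207 = n_3 - 1$ for $A_3$, and $|z| = 119 < 227 = n_4 - 1$ for $A_4$. Thus in every case $n_j - 1$ strictly exceeds all generator degrees, so there are no indecomposables in degree $n_j - 1$ and hence $\pi_{n-1}(M) \tensor \Q = 0$, as claimed.

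There is genuinely nothing beyond this tabulation, so I do not expect any obstacle; one may note that the formal dimension formula $|y_1| + |y_2| + |y_3| + |z| - (|x_1| - 1) - (|x_2| - 1)$ quoted in the proof of Proposition~\ref{prop:realisabilitypoincare} already forces $n_j$ to lie well above $|z|$, which is exactly why the top homotopy in degree $n_j - 1$ automatically vanishes. The conclusion is moreover independent of the choice of $M$ within $\realmfd{A_j, [A_j]}$, precisely because all such $M$ share the minimal model $A_j$ and therefore the same rational homotopy groups.
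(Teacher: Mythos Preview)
Your proof is correct and follows exactly the same approach as the paper: the paragraph immediately preceding the lemma already records that $\pi_*(M)\otimes\Q$ is spanned by the six generators $x_1,x_2,y_1,y_2,y_3,z$ (citing \cite[Theorem~15.11]{felixhalperinthomas}) and points to the degree lists in Section~\ref{subsec:dgadesign}, leaving the actual numerical check implicit. You have simply made that check explicit by observing $|z|<n_j-1$ in each case, which is a welcome clarification.
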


Theorem \ref{thm:inflexiblesums} allows us to prove the existence of
large classes of inflexible manifolds.  We do this systematically
in Section \ref{subsec:moreinflexiblemanifolds} and for now present
the following simple example:

\begin{exa}
  Let $j \in \{1,\dots, 4\}$, let $M \in \realmfd{A_j,[A_j]}$, and let
  $r \in \N_{>0}$. Then the $r$-fold connected sum~$M^{\# r}$ is
  inflexible (and simply connected). Looking at the rational
  cohomology ring of these manifolds shows that $M^{\# r} \not\simeq
  M^{\# s}$, whenever~$r \neq s$.
\end{exa}

\subsection{Inflexible products}\label{subsec:product}

In general, it is not clear that products of inflexible manifolds are
inflexible as maps between products of manifolds cannot
necessarily be decomposed into maps on the factors; we will show now
that certain products of our basic examples of simply connected
manifolds are inflexible:

\begin{thm}[Inflexible products]\label{thm:inflexibleproducts}
  Let $j \in \{2,3,4\}$, let $M \in \realmfd{A_j, [A_j]}$ be a
  manifold as in Theorem~\ref{thm:realisationbymfds}, and let $k\in
  \N_{>0}$. Then the $k$-fold product~$M^{\times k}$ is inflexible
  (and simply connected).
\end{thm}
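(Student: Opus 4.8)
The plan is to argue on the level of minimal models. Since $M$ is oriented, closed and simply connected, so is $M^{\times k}$, and its minimal model is the tensor product $A_j^{\otimes k}$. A self-map $\phi\colon M^{\times k}\to M^{\times k}$ induces a dga endomorphism $\Phi\colon A_j^{\otimes k}\to A_j^{\otimes k}$, and, as explained in Section~\ref{subsec:strategy}, the degree of $\phi$ is the scalar by which the induced map $H^*(\Phi)$ acts on the top cohomology $H^{k\cdot n}(A_j^{\otimes k})\cong\Q$, where $n$ is the formal dimension of $A_j$. By the K\"unneth formula this top class is $\prod_{i=1}^k t_i^{\,m}$, where $t_i$ is the image in factor $i$ of the even generator whose power is the fundamental class of $A_j$ (namely $x_2$ for $j\in\{2,4\}$ and $x_1$ for $j=3$, by Proposition~\ref{prop:dgafcl}) and $m\cdot|t|=n$. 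So it suffices to show that $H^*(\Phi)$ multiplies $\prod_i t_i^{\,m}$ by $-1$, $0$, or~$1$.

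First I would pin down the action of $\Phi$ on the even generators. For $j\in\{2,3,4\}$ the two even generators satisfy $|x_1|<|x_2|<2\cdot|x_1|$, and both lie strictly below the degrees of the odd generators. Hence in $A_j^{\otimes k}$ the homogeneous subspace of degree $|x_1|$ is spanned by $x_1^{(1)},\dots,x_1^{(k)}$ and that of degree $|x_2|$ by $x_2^{(1)},\dots,x_2^{(k)}$, where $x_r^{(i)}$ denotes the generator $x_r$ of the $i$-th tensor factor. Thus there are matrices $A=(a_{il})$ and $B=(b_{il})$ over $\Q$ with $\Phi(x_1^{(i)})=\sum_l a_{il}\,x_1^{(l)}$ and $\Phi(x_2^{(i)})=\sum_l b_{il}\,x_2^{(l)}$. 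This is exactly where the hypothesis $j\neq 1$ enters: for $A_1$ one has $|x_2|=2\cdot|x_1|$, so the degree-$|x_2|$ subspace of $A_1^{\otimes k}$ also contains the products $x_1^{(i)}x_1^{(i')}$, and $\Phi(x_2^{(i)})$ need no longer be linear in the $x_2^{(l)}$.

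Next I would extract constraints on $A$ and $B$ from compatibility of $\Phi$ with the differential. Writing $\xi_i=[x_1^{(i)}]$ and $\eta_i=[x_2^{(i)}]$, the relations $[x_1^3x_2]=[x_1^2x_2^2]=[x_1x_2^3]=0$ in each factor give, since $H^*(\Phi)$ is a ring map, the identities $(\Phi^*\xi_i)^3(\Phi^*\eta_i)=0$ and two analogues in $H^*(A_j)^{\otimes k}$. Expanding these and keeping track of which monomials $[x_1^a x_2^b]$ survive in a single factor forces the matrix governing the fundamental-class generator ($B$ for $j\in\{2,4\}$, $A$ for $j=3$) to be \emph{monomial}, i.e.\ to have at most one non-zero entry per row: there is a map $\sigma\colon\{1,\dots,k\}\to\{1,\dots,k\}$ and scalars $c_i$ with $\Phi^*t_i=c_i\,t_{\sigma(i)}$, apart from rows whose vanishing will make $\deg\phi=0$. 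Once this shape is available, the factor-$i$ relation $dz^{(i)}=z'^{(i)}+(x_1^{(i)})^{k_1}+(x_2^{(i)})^{k_2}$ can be fed into the computation of Proposition~\ref{prop:dgainflexible}: projecting $\Phi$ onto the target factor $\sigma(i)$ produces, up to homotopy, a single-factor dga endomorphism of $A_j$, whose inflexibility bounds the surviving scalar to $c_i\in\{-1,0,1\}$.

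Finally, the degree is read off. If $\sigma$ is not a bijection, then two of the $t_i$ land in a common factor, where $t_{\sigma(i)}^{\,2m}=0$, so $H^*(\Phi)\bigl(\prod_i t_i^{\,m}\bigr)=0$ and $\deg\phi=0$; the same conclusion holds if some relevant row vanishes. If $\sigma$ is a bijection, then $H^*(\Phi)\bigl(\prod_i t_i^{\,m}\bigr)=\bigl(\prod_i c_i\bigr)^{m}\prod_i t_i^{\,m}$, so $\deg\phi=\bigl(\prod_i c_i\bigr)^{m}\in\{-1,0,1\}$ because each $c_i\in\{-1,0,1\}$. In either case $M^{\times k}$ is inflexible. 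I expect the third paragraph to be the main obstacle: the delicate point is organising the tensor-product differential constraints so as to force the fundamental-class matrix into monomial form and then cleanly reducing the remaining scalar question to the single-factor inflexibility of Proposition~\ref{prop:dgainflexible}. The bookkeeping of exactly which mixed monomials $[x_1^ax_2^b]$ survive in each factor is what makes this work for $j\in\{2,3,4\}$ and what breaks down for $A_1$.
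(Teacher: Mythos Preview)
Your overall plan matches the paper's: pass to a dga endomorphism~$\Phi$ of~$A_j^{\otimes k}$ and show that the induced map on the one-dimensional top cohomology is multiplication by~$-1$,~$0$, or~$1$ by controlling the action on the low-degree even generators. Two points, however, distinguish the paper's argument from yours and close the gap you yourself flag in the third paragraph.

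First, the paper reverses your order of operations. Instead of forcing monomial form and then extracting the scalar constraint, it observes at the outset that for every pair of indices~$a,b$ the composite $p_a\circ\Phi\circ i_b\colon A_j\to A_j$ (with $i_b$ the tensor-factor inclusion and $p_a$ the augmentation projection) is already a dga endomorphism of the single factor~$A_j$. Single-factor inflexibility (Proposition~\ref{prop:dgainflexible}) then forces every entry of your matrix to lie in~$\{-1,0,1\}$ before any mixed-monomial analysis. You eventually invoke the same projection, but only after the monomial reduction; using it as the very first step is what makes the paper's argument short.

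Second, your monomial step as written does not quite close. The relations $(\Phi^*\xi_i)^3(\Phi^*\eta_i)=0$, $(\Phi^*\xi_i)^2(\Phi^*\eta_i)^2=0$, $(\Phi^*\xi_i)(\Phi^*\eta_i)^3=0$ only yield implications of the form ``row~$i$ of one matrix non-monomial $\Rightarrow$ row~$i$ of the other matrix vanishes''; they do not by themselves force the fundamental-class matrix to be monomial. The paper sidesteps this by assuming $\deg\Phi\neq 0$ at the start: since $A_j^{\otimes k}$ is an elliptic Poincar\'e dga, any non-zero-degree endomorphism induces isomorphisms on all cohomology and is therefore a dga isomorphism~\cite[Proposition~12.10(i)]{felixhalperinthomas}, so both matrices~$A$ and~$B$ are invertible. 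With zero rows ruled out, the dga-level equation $d\Phi(y_{ra})=\Phi(dy_{ra})$ (equivalently your cohomology relation, since in the relevant degree the only coboundaries come from the $y_{rb}$'s) immediately forces the fundamental-class matrix to be a signed permutation. Your attempt to treat the degree-zero and non-zero cases simultaneously is precisely what creates the bookkeeping difficulty you anticipated; separating off $\deg\Phi=0$ first removes it.
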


This result is proved in the following by carefully analysing the
algebraic counterpart, namely tensor products of the Poincar\'e
dgas~$A_2$,~$A_3$, and~$A_4$ respectively: Recall that given dgas~$A$
and~$B$ there is the tensor product dga~$A \tensor B$~\cite[Example~3
on p.~47]{felixhalperinthomas} and that $H^*(A \tensor B) \cong H^*(A)
\tensor H^*(B)$.  In particular, if $(A, [A])$ and $(B, [B])$ are
Poincar\'{e} dgas then so is the product~$(A \tensor B, [A] \tensor
[B])$.

\begin{prop}\label{prop:dgaprodinflexible}
  For each~$j \in \{ 2, 3, 4\}$ and for all~$k \in \N_{>0}$ the
  $k$-fold tensor product $(A_j^{\otimes k}, [A_j]^{\otimes k})$
  is an inflexible Poincar\'{e} dga.
\end{prop}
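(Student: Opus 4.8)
The plan is to take the Poincaré property for granted (a tensor product of Poincaré dgas is again Poincaré, as recalled immediately before the statement) and to prove inflexibility directly. I would write $A_j^{\tensor k}$ as the free graded-commutative algebra on generators $x_{1,i}, x_{2,i}, y_{1,i}, y_{2,i}, y_{3,i}, z_i$ with $i \in \{1,\dots,k\}$ --- one copy of the generating set of $A_j$ for each tensor factor --- where $d$ acts factorwise by the formulae of Section~\ref{subsec:dgadesign}. As this is a minimal Sullivan algebra, any dga endomorphism $f$ is determined by the images of the generators, which must preserve degree and commute with $d$. Since the fundamental class $[A_j]^{\tensor k}$ is the product over the factors of a power of $x_2$ for $j \in \{2,4\}$ and of $x_1$ for $j = 3$, it suffices to show that the scalar by which $H^*(f)$ multiplies this top class lies in $\{-1,0,1\}$.

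First I would constrain $f$ on the even generators, and this is exactly where the hypothesis $j \in \{2,3,4\}$ enters: for these three dgas $|x_1|$ is the minimal positive degree and $|x_2|$ is not a multiple of $|x_1|$, in contrast to $A_1$, where $|x_2| = 2|x_1|$ allows $f(x_2)$ to acquire an $x_1^2$ term. Hence by degree $f(x_{1,i})$ is a $\Q$-linear combination of the $x_{1,m}$ and $f(x_{2,i})$ a $\Q$-linear combination of the $x_{2,m}$. The same degree count, using that the gap $|y_2|-|y_1| = 2$ is smaller than $|x_1|$, shows that $f(y_{1,i})$ and $f(y_{2,i})$ are $\Q$-linear combinations of the $y_{1,m}$, respectively the $y_{2,m}$; I would not need any control over $f(y_{3,i})$ or $f(z_i)$ beyond the next step.

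The heart of the argument is to promote this to a monomial action, and this rigidity step is where I expect the main difficulty to lie. Comparing $f \circ d$ with $d \circ f$ on $z_i$ and isolating the summands lying in the even subalgebra, the pure powers $x_1^{k_1} + x_2^{k_2}$ of $dz$ produce an identity $(\sum_m a_{im} x_{1,m})^{k_1} + (\sum_m b_{im} x_{2,m})^{k_2} = \sum_m \gamma_{im}(x_{1,m}^{k_1} + x_{2,m}^{k_2})$. Since a $k_1$-th power of a nonzero linear form is a $\Q$-linear combination of the $k_1$-th powers of the coordinate variables only when the form is a scalar multiple of a single variable, each of $f(x_{1,i})$ and $f(x_{2,i})$ must be a single monomial $a_i x_{1,\sigma(i)}$, respectively $b_i x_{2,\tau(i)}$. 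Matching $dy_1 = x_1^3 x_2$ then forces $\sigma(i) = \tau(i) =: \pi(i)$ and $f(y_{1,i}) = a_i^3 b_i\, y_{1,\pi(i)}$, and $dy_2 = x_1^2 x_2^2$ gives $f(y_{2,i}) = a_i^2 b_i^2\, y_{2,\pi(i)}$. A short degree argument shows that $a_i = 0$ if and only if $b_i = 0$, in which case the top class is sent to $0$; so for nonzero degree all scalars are nonzero, and since the power of $x_2$ (respectively $x_1$) just beyond the top vanishes in $H^*(A_j)$, the function $\pi$ must be a bijection of $\{1,\dots,k\}$.

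It then remains to pin down the scalars, which runs parallel to the single-factor computation in Proposition~\ref{prop:dgainflexible}. Substituting the monomial values above into the $z_i$-equation and comparing, for each $i$, the coefficients of the pure powers $x_{1,\pi(i)}^{k_1}$, $x_{2,\pi(i)}^{k_2}$ and of the clean mixed term --- the $y_1 y_2$-summand of $dz$, e.g. $x_2^4 y_1 y_2$ for $A_2$, which involves only the already-controlled $f(y_{1,i})$ and $f(y_{2,i})$ --- yields a small system of the shape $a_i^{k_1} = b_i^{k_2} = a_i^{p}\, b_i^{q} = \gamma_i$ with concrete exponents read off from the definition of $A_j$. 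Elementary manipulation over $\Q$ then forces $a_i, b_i \in \{-1,1\}$ (and in fact $a_i = b_i = 1$ for $j \in \{2,3\}$). As the induced scalar on the top class equals $\prod_i b_i^{18}$ for $A_2$, $\prod_i a_i^{26}$ for $A_3$, and $\prod_i b_i^{19}$ for $A_4$, in every case it lies in $\{-1,0,1\}$, so $A_j^{\tensor k}$ is inflexible.
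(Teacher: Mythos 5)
Your proposal is correct, but it takes a genuinely different route from the paper's proof. The paper treats the single-factor result as a black box: it first observes that an endomorphism $f$ of non-zero degree is automatically a dga \emph{isomorphism} (ellipticity plus Poincar\'e duality, citing F\'elix--Halperin--Thomas), then composes $f$ with the factor inclusions $i_b$ and augmentation projections $p_a$ to obtain dga endomorphisms $f_{ab}$ of $A_j$ itself; inflexibility of the single factor (Proposition~\ref{prop:dgainflexible}) forces each matrix entry of $f$ on the generators $x_{1a}$ (resp.\ $x_{2a}$) to lie in $\{-1,0,1\}$, and a cross-term comparison in $df(y_{1a})=f(dy_{1a})$ (resp.\ $y_{2a}$) then shows this matrix is a signed permutation matrix, whence the degree is $\pm 1$. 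You instead re-derive everything inside the tensor product without ever invoking Proposition~\ref{prop:dgainflexible} or the isomorphism property: degree counting gives linear forms (your observation that $|x_2|$ is not a multiple of $|x_1|$ for $j\in\{2,3,4\}$ is exactly the point of the paper's closing remark on why $A_1$ is excluded), the pure powers $x_1^{k_1}+x_2^{k_2}$ in $dz_i$ force monomiality, and an explicit coefficient computation pins the scalars to $\pm 1$; degenerate cases and the bijectivity of $\pi$ are handled by vanishing of cohomology above the formal dimension rather than by invertibility of $f$. Your version is self-contained (it in fact re-proves the case $k=1$), at the cost of redoing the scalar analysis that the paper reuses. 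Two small repairs are needed: your displayed identity is not literally the even-subalgebra part of $df(z_i)$, since monomials $P\,y_{j,m}$ of $f(z_i)$ contribute terms $\pm P\,dy_{j,m}$, each divisible by $x_{1,m}x_{2,m}$ --- but as your deductions only compare monomials purely in the $x_1$-variables or purely in the $x_2$-variables, these extra terms never interfere; and the asserted ``short degree argument'' for $a_i=0\Leftrightarrow b_i=0$ is really another projection of the same $z_i$-equation (pure $x_1$-monomials force all $\gamma_{im}=0$, then pure $x_2$-monomials force $b_i=0$), which is true and easily supplied.
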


\begin{proof}
  We shall give the proof for $A_3$ and then state the modifications
  necessary for $A_2$ and $A_4$. Let us fix some notation: for an 
  index~$a \in \{1, \dots , k \}$ let $A_{3a}$ denote the 
  $a$-th copy of~$A_3$ in the $k$-fold product $A_{3}^{\otimes k}$.  
  Similarly, for generators $x_i, y_i \in A_3$ as in
  Section~\ref{subsec:dgadesign} let $x_{ia}$ and $y_{ia}$ denote the
  copy of~$x_i$ or~$y_i$ in~$A_{3a}$. Notice that because the
  fundamental class of each~$A_{3a}$ is given by~$[x_{1a}]^{26}$ the
  fundamental class of~$A_3^{\otimes k}$ is given by~$\otimes_{a=1}^k
  [x_{1a}]^{26}$. Therefore, we can read off the degree of dga
  endomorphisms of~$A_3^{\otimes k}$ by looking at the situation in
  degree~$|x_1|=8$. 

  Now let $f \colon A_{3}^{\otimes k} \to A_{3}^{\otimes k}$ be a dga
  endomorphism of non-zero degree. Since $A_3^{\otimes k}$ is
  Poincar\'{e} with finite dimensional cohomology it follows that $f$
  induces isomorphisms on all cohomology groups, and so $f$ is a dga
  isomorphism~\cite[Proposition 12.10(i)]{felixhalperinthomas}. Thus,
  $H^8(f) \colon H^8(A_3^{\otimes k}) \longrightarrow H^8(A_3^{\otimes
    k})$ is a $\Q$-linear isomorphism. By construction of~$A_3$, there
  is a canonical isomorphism~$(A_3^{\otimes k})^8 \cong
  H^8(A_3^{\otimes k})$, which identifies~$H^8(f)$
  with~$f|_{(A_3^{\otimes k})^8}$. In particular, also
  $f|_{(A_3^{\otimes k})^k}$ is a $\Q$-linear isomorphism.

  We shall show below that $H^8(f)$ is represented by a signed
  permutation matrix with respect to the obvious basis
  of~$(A_3^{\otimes k})^8 = (A_3^8)^{\oplus k}$. If this holds, then, 
  because $\otimes_{a=1}^k [x_{1a}]^{26}$
  is a fundamental class of~$A_3^{\otimes k}$, the dga map~$f$ has
  degree~$1$ or~$-1$, which proves that $A_3^{\times k}$ is
  inflexible. 
  
  In order to complete the proof it therefore remains to prove that
  $H^8(f)$ is represented by a signed permutation matrix: For each~$b
  \in \{1, \dots, k \}$ we have the dga projection~$p_b \colon
  A_{3}^{\otimes k} \to A_{3b}$ and the dga inclusion~$i_b \colon
  A_{3b} \to A_3^{\otimes k}$. Moreover, for~$a$,~$b \in
  \{1,\dots,k\}$ we consider the dga map
  \[ f_{ab} : = p_a \circ f \circ i_b \colon A_{3b} \to A_{3a} .
  \]
  Since $A_{3a} = A_3 = A_{3b}$, we have by
  Proposition~\ref{prop:dgainflexible} that $f_{ab}$ has
  degree~$0$,~$1$ or~$-1$. Because $[A_{3}] = [x_1]^{26}$ and $A_3^8
  =\Q \cdot x_1$ it follows that $f_{ab}(x_{1a}) = \pm x_{1b}$ or
  $f_{ab}(x_{1a}) = 0$.  Thus, for all $a \in\{ 1, \dots , k\}$ we
  obtain
  \begin{align*}\label{eq:fx1adescr}\tag{$\mathbin{*}$} 
    f(x_{1a}) = \sum_{b=1}^k \epsilon_{ab} \cdot x_{1b}, 
     \quad \text{where $\epsilon_{ab} \in \{-1, 0, 1\}. $} 
  \end{align*}
  We proceed now by contradiction: Suppose that for some~$a$ at least
  two of the coefficients~$\{\epsilon_{ab}\mid b \in \{1,\dots,k\}\}$
  are non-zero. By construction of~$A_3$ for~$i \in \{10, 33\}$ there are
  identifications~$(A_{3}^{\otimes k})^{i} 
     = \bigoplus_{a=1}^k A_{3a}^{i}$.  
  We now consider the equation
  \[ df(y_{1a}) = f(dy_{1a}) .
  \]
  The left hand side is a sum of monomials of the form~$x_{1c}^3
  x_{2c}$, which can be seen by looking at the definition
  of~$A_{3a}^{33}$ and of the differential on~$A_3$
  (Example~\ref{exa:dga208}). However, on the right hand side, we have
  $f(dy_{1a}) = f(x_{1a}^3x_{2a}) = f(x_{1a})^3 \cdot f(x_{2a})$.  
  Using the description of~$f(x_{1a})$ from
  Equation~\eqref{eq:fx1adescr} and the fact that there are two
  non-zero coefficients~$\epsilon_{ab}$ and~$\epsilon_{ab'}$, it
  follows that  the right hand side contains monomials of the form~$\pm
  C_{bb'c} \cdot x_{1b}^2 \cdot x_{1b'} \cdot x_{2c}$ where $b \neq
  b'$ and $C_{bb'c} \in \Q \setminus \{0\}$. But such monomials are
  not present on the left hand side, which is a
  contradiction. Therefore, we can conclude that for each~$a \in
  \{1,\dots,k\}$ only one of the coefficients~$\epsilon_{ab} \in \{-1,0,1\}$ is
  non-zero. As $H^8(f) = f|_{(A_3^{\otimes k})^8}$ is an isomorphism,
  it follows that $H^8(f)$ indeed is represented by a signed
  permutation matrix.

  For the dgas~$A_2$ and~$A_4$ the fundamental class is a power
  of~$x_2$ and so we repeat the line of argument this time using
  $(A_{2}^{\otimes k})^{10} = \oplus_{a=1}^k A_{2a}^{10}$ or
  $(A_{4}^{\otimes k})^{12} = \oplus_{a=1}^kA_{4a}^{12}$ and the
  equation $dy_{2a} = x_{1a}^2x_{2a}^2$ instead.
\end{proof}

Notice that the above proof does not directly carry over to the case
of the Poincar\'e dga~$(A_1,[x_2]^{16})$ because dga endomorphisms of~$A_1$
are slightly more complicated in degree~$|x_2| = 4$ than in the cases
discussed above.

\begin{proof}[Proof of Theorem~\ref{thm:inflexibleproducts}]
  The minimal model of $M^{\times k}$ is the $k$-fold tensor product
  $\smash{A_j^{\otimes k}}$~\cite[Example 1
  p.\,248]{felixhalperinthomas}; moreover, the fundamental class
  of~$\smash{M^k}$ corresponds to~$\smash{[A_j]^{\tensor k} \in
    A_j^{\otimes k}}$. Now the theorem follows because the Poincar\'e
  dga~$\smash{A_j^{\otimes k}}$ is inflexible by
  Proposition~\ref{prop:dgaprodinflexible}.
\end{proof}

\begin{cor}\label{cor:manyinflexible}
  In each of infinitely many dimensions there exist infinitely many
  rational homotopy types of oriented closed simply connected
  inflexible manifolds. 
\end{cor}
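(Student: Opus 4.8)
The plan is to combine the two constructions proved above: the product construction of Theorem~\ref{thm:inflexibleproducts}, which yields inflexible manifolds in infinitely many distinct dimensions, and the connected-sum construction of Theorem~\ref{thm:inflexiblesums}, which yields infinitely many rational homotopy types inside a single dimension.

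First I would fix~$j \in \{2,3,4\}$, say $j=2$ of formal dimension~$d=108$, and choose a manifold~$M \in \realmfd{A_2,[A_2]}$; this class is non-empty by Theorem~\ref{thm:realisationbymfds}. For each~$k \in \N_{>0}$ the $k$-fold product~$M^{\times k}$ is an oriented closed simply connected inflexible manifold of dimension~$dk = 108 \cdot k$ by Theorem~\ref{thm:inflexibleproducts}. Letting~$k$ range over~$\N_{>0}$ already exhibits inflexible manifolds in the infinitely many dimensions~$\{108 \cdot k \mid k \in \N_{>0}\}$.

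Next I would produce, for each fixed dimension~$n = 108 \cdot k$ of this form, infinitely many rational homotopy types. The key step is to apply Theorem~\ref{thm:inflexiblesums} with base manifold~$M^{\times k}$. Its minimal model is the tensor product~$A_2^{\tensor k}$, which is an inflexible Poincar\'e dga by Proposition~\ref{prop:dgaprodinflexible}; and its rational homotopy~$\pi_*(M^{\times k}) \tensor \Q \cong \bigoplus_{a=1}^k \bigl(\pi_*(M) \tensor \Q\bigr)$ is concentrated in the degrees of the generators~$x_1,x_2,y_1,y_2,y_3,z$ of~$A_2$, all of which are at most~$59$ and hence strictly below~$n-1$; thus $\pi_{n-1}(M^{\times k}) \tensor \Q = 0$. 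Therefore the hypotheses of Theorem~\ref{thm:inflexiblesums} are met, and the iterated connected sums~$(M^{\times k})^{\# r}$ are inflexible (and simply connected) for all~$r \in \N_{>0}$.

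Finally I would check that these connected sums realise infinitely many rational homotopy types in dimension~$n$. Since taking a connected sum of oriented closed $n$-manifolds adds reduced cohomology in the degrees~$0 < i < n$, one has $\dim_\Q \tilde H^i\bigl((M^{\times k})^{\# r};\Q\bigr) = r \cdot \dim_\Q \tilde H^i(M^{\times k};\Q)$ in that range; as $M^{\times k}$ has non-trivial reduced rational cohomology, the total Betti number grows with~$r$, so distinct values of~$r$ give rationally inequivalent manifolds. The only point demanding care is the verification of the two hypotheses of Theorem~\ref{thm:inflexiblesums} for the \emph{product} manifold~$M^{\times k}$ -- namely inflexibility of its minimal model, which is exactly Proposition~\ref{prop:dgaprodinflexible}, together with the vanishing of~$\pi_{n-1}(\args) \tensor \Q$ computed above -- rather than any delicate new calculation.
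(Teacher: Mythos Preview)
Your proposal is correct and follows essentially the same route as the paper: build $(M^{\times k})^{\#r}$ using Theorem~\ref{thm:inflexibleproducts} and Theorem~\ref{thm:inflexiblesums}, then distinguish the rational homotopy types for different~$r$ via rational cohomology. Your verification of the hypotheses of Theorem~\ref{thm:inflexiblesums} for the product~$M^{\times k}$ (inflexibility of~$A_2^{\otimes k}$ via Proposition~\ref{prop:dgaprodinflexible} and the vanishing of~$\pi_{n-1}\otimes\Q$ from the generator degrees) is exactly what the paper does, only spelled out more explicitly than the paper's terse appeal to Lemma~\ref{lem:homotopygroups}.
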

\begin{proof}
  Let $j \in \{2,3,4\}$, let $k \in \N_{>0}$, and let $r \in
  \N_{>0}$. Moreover, let $M \in \realmfd{A_j, [A_j]}$.
  Theorem~\ref{thm:inflexibleproducts} and
  Theorem~\ref{thm:inflexiblesums} (together with
  Lemma~\ref{lem:homotopygroups}) show that the oriented closed
  simply connected manifold~$(M^{\times k})^{\# r}$ is inflexible.
  The rational cohomology of these manifolds shows that
  if $r \neq r'$, then $(M^{\times k})^{\# r}$ and $(M^{\times k})^{\#
    r'}$ do not have the same rational homotopy type.
\end{proof}


\subsection{Evidence for the genericity of inflexibility} \label{subsec:moreinflexiblemanifolds}

In the following, we combine results of the preceding sections to
exhibit large numbers of simply connected inflexible manifolds: On the
one hand, we show that there are ``many'' homotopy types of simply
connected inflexible manifolds, and in particular that in many
dimensions simply connected manifolds are ``ge\-ner\-ic'' from the point
of view of oriented rational bordism. On the other hand, we show that simply
connected inflexible manifolds exist that satisfy tangential structure
constraints such as being parallelisable or non-spinable.

One way to create many (integral) homotopy types of simply connected inflexible
manifolds out of a single inflexible Poincar\'e dga is to rescale the
fundamental class of the dga in question: 

\begin{prop}[Scaling the fundamental class]\label{prop:scaling}
  Let $(A,[A])$ be an inflexible Poincar\'e dga, and let $a$,~$a' \in
  \Q \setminus \{0\}$ with~$|a| \neq |a'|$. If $M \in \realmfd{A,a \cdot
    [A]}$ and $M' \in \realmfd{A, a' \cdot [A]}$, then $M \not\simeq
  M'$.
\end{prop}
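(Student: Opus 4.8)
The plan is to argue by contradiction and to transfer the whole problem into the category of minimal models, where the inflexibility of~$A$ applies verbatim. So suppose $M \simeq M'$ and fix a homotopy equivalence $h \colon M \longrightarrow M'$. Since $M$ and $M'$ both realise~$A$ they are oriented closed connected manifolds of the same dimension (the formal dimension of~$A$), so $h$ has degree $\deg(h) \in \{-1,1\}$. As $h$ is a homotopy equivalence, the induced map on minimal models~$A_h$ is a dga isomorphism, and by the correspondence recalled in Section~\ref{subsec:strategy} its algebraic mapping degree on the cohomological fundamental classes equals~$\deg(h)$.

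Next I would manufacture a dga endomorphism of~$A$ out of~$h$ and read off its effect on~$[A]$. By definition of the classes $\realmfd{A,a\cdot[A]}$ and $\realmfd{A,a'\cdot[A]}$ (Definition~\ref{def:realmfd}) there are dga isomorphisms $\phi \colon A_M \to A$ and $\phi' \colon A_{M'} \to A$ sending the cohomological fundamental classes of~$M$ and~$M'$ to $a\cdot[A]$ and $a'\cdot[A]$ respectively. Set
\[
  g := \phi \circ A_h \circ (\phi')^{-1} \colon A \longrightarrow A.
\]
A short diagram chase on cohomology, using the three normalisations above, then yields
\[
  H^*(g)\bigl([A]\bigr) = \frac{a}{a'}\cdot\deg(h)\cdot[A].
\]
The factor $a/a'$ records that $\phi$ and $\phi'$ rescale the fundamental class, while $\deg(h)$ is contributed by~$A_h$. (The precise placement of $a$ and $a'$ depends on the variance convention for the Sullivan functor, but this is immaterial, since in the end only the absolute value of the scalar matters.)

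Finally I would invoke inflexibility. Because $(A,[A])$ is an inflexible Poincar\'e dga its fundamental class~$[A]$ is inflexible (Definition~\ref{defi:inflexiblealgebra/space} and~\ref{def:inflexibleclass}), so the dga endomorphism~$g$ must act on~$[A]$ by a scalar in $\{-1,0,1\}$; that is, $\frac{a}{a'}\cdot\deg(h) \in \{-1,0,1\}$. Since $a,a' \in \Q\setminus\{0\}$ and $\deg(h)\in\{-1,1\}$, this scalar is non-zero, hence equals $\pm1$, forcing $|a| = |a'|$ and contradicting the hypothesis $|a|\neq|a'|$. Therefore no homotopy equivalence $M \to M'$ exists and $M \not\simeq M'$.

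I expect the only delicate point to be the bookkeeping in the second step: confirming that the chosen isomorphisms $\phi$ and $\phi'$ genuinely intertwine the cohomological fundamental classes with the prescribed scalings~$a$ and~$a'$, and that $A_h$ indeed realises $\deg(h)$ as its algebraic mapping degree. Once these normalisations are fixed, the computation of $H^*(g)([A])$ and the application of inflexibility are immediate.
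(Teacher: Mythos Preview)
Your proof is correct and follows the same strategy as the paper's: assume a homotopy equivalence, produce from it a self-map of the rational model, read off the resulting scaling of the fundamental class, and contradict inflexibility. The only difference is cosmetic --- the paper phrases the argument on the space side (using the rationalisation~$X$ of~$M$ and~$M'$ and the universal property of rationalisation to produce a self-map~$f_\Q \colon X \to X$), whereas you work directly with minimal Sullivan algebras and conjugate~$A_h$ by the identifications~$\phi,\phi'$; under the standard equivalence of categories these two constructions correspond to one another.
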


\begin{proof}
  Recall that any Poincar\'e dga is the minimal model of some simply
  connected rational $\Q$-Poincar\'e space (cf.~proof of
  Proposition~\ref{prop:realisabilitypoincare}); hence there is a
  rational $\Q$-Poin\-ca\-r\'e space~$(X,[X])$ realising~$(A,[A])$.

  Let $M \in \realmfd{A,a \cdot [A]}$ and $M' \in \realmfd{A, a' \cdot
    [A]}$; then the rationalisation of both~$M$ and~$M'$ coincides
  with~$X$, the only difference being that the fundamental classes are
  mapped to different multiples of~$[X]$. Let $\rho_M \colon M
  \longrightarrow M_\Q = X$ and $\rho_{M'} \colon M' \longrightarrow
  X$ be the canonical maps provided by the rationalisation construction;
  by definition, then
  \[ H_n(\rho_{M};\Q)\fclq M = a \cdot [X]
     \qquad\text{and}\qquad
     H_n(\rho_{M'};\Q)\fclq{M'} = a' \cdot [X],
  \]
  where $n:= \dim M = \dim M'$. \emph{Assume} for a contradiction that
  there is a homotopy equivalence~$f \colon M \longrightarrow M'$. By
  the universal property of
  rationalisation~\cite[Theorem~9.7(ii)]{felixhalperinthomas} there is
  a continuous map~$f_\Q \colon X \longrightarrow X$ with~$\rho_{M'}
  \circ f = f_\Q \circ \rho_M$. Hence,
  \begin{align*}
        \deg_{[X]} f_\Q \cdot a \cdot [X] 
    & = H_n(f_\Q \circ \rho_M;\Q) \fclq M\\
    & = H_n(\rho_{M'}\circ f;\Q) \fclq M 
      = \deg f \cdot a' \cdot [X].
  \end{align*}
  Because $f$ is a homotopy equivalence and because $X$ is inflexible,
  it follows that $|\deg f| = 1 = |\deg_{[X]} f_\Q|$. Therefore, $|a|
  = |a'|$, which is a contradiction. So $M \not \simeq M'$.
\end{proof}

\begin{exa}\label{exa:scaling}
  Let $j \in \{1,\dots,4\}$.  In view of Remark~\ref{rem:scaling}, for
  all scalars~$a \in \Q \setminus \{0\}$ the class~$\realmfd{A_j, a\cdot
    [A_j]}$ is non-empty. Therefore, by the proposition above, there
  are infinitely many homotopy types of oriented closed simply
  connected manifolds having the rational homotopy type given
  by~$A_j$; clearly, all of these manifolds are inflexible. 

  Similarly, for~$j \in \{2,3,4\}$ and all~$k \in \N_{>0}$ there are
  infinitely many homotopy types of oriented closed simply connected
  manifolds having the rational homotopy type given by~$A_j^{\otimes
    k}$ (because the corresponding Witt index is trivial as well, and
  so also the scalar multiples of the fundamental class are realisable
  by manifolds). 
\end{exa}

For Propositions \ref{lem:parallelrealise} and \ref{prop:nonspin}
below we shall need the follow lemma, which is a refinement of a
special case of the Barge-Sullivan Theorem~\ref{thm:bargesullivan}:

\begin{lem} \label{lem:parallelrealise} 
Let $(X, [X])$ be a
  $\Q$-Poincar\'{e} space of formal dimension~$4k$ with vanishing Witt index:~$\tau_{[X]} =
  0 \in W_0(\Q)$.  Then $(X,[X])$ can be realised by a stably parallelisable oriented closed simply
  connected smooth manifold.
  \end{lem}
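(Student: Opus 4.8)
The plan is to feed the trivial Pontryagin input into the Barge--Sullivan Theorem~\ref{thm:bargesullivan} and then to improve the manifold it produces to a stably parallelisable one by surgery on its stable normal Gauss map.

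First I would apply Theorem~\ref{thm:bargesullivan} with~$p_* := 1 \in H^0(X;\Q)$. The first hypothesis holds trivially, since~$\tau_{[X]} = 0$ certainly lies in the image of~$W_0(\Z) \to W_0(\Q)$. For the second hypothesis, recall that a Witt-trivial form has vanishing signature, so~$\tau_{[X]} = 0$ forces~$\sign(X,[X]) = 0$ (exactly as in Proposition~\ref{prop:Wittindex}); on the other hand~$L(1) = 1 \in H^0(X;\Q)$ pairs to~$0$ with~$[X] \in H_{4k}(X;\Q)$ for~$k \geq 1$, so that~$\langle L(p_*),[X]\rangle = 0$ as well. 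Hence the theorem yields an oriented closed simply connected smooth manifold~$M_0$ together with a rational equivalence~$f_0 \colon M_0 \to X$ satisfying~$H_{4k}(f_0;\Q)(\fclq{M_0}) = [X]$ whose total rational Pontryagin class is~$f_0^*(1) = 1$; that is, all rational Pontryagin classes of~$M_0$ vanish.

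The second step, which is the crux, is to convert~$M_0$ into a stably parallelisable manifold without destroying the rational equivalence to~$X$. Since the rational Pontryagin classes of~$M_0$ vanish, the class of its stable normal bundle is torsion in~$\widetilde{KO}(M_0)$; equivalently, the stable normal Gauss map~$\nu \colon M_0 \to \BSO$ is rationally null-homotopic. The aim is to null-homotope~$\nu$ by surgery below the middle dimension: as~$M_0$ is simply connected and of dimension~$4k \geq 5$ (which covers all cases of interest), one can successively surger embedded spheres carrying the obstructions to trivialising~$\nu$, performing each surgery on a homology class that is \emph{rationally trivial} so that~$H_*(\args;\Q)$ and the fundamental class are left unchanged. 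The rationalised map to~$X$ then survives as a rational equivalence, while the stable normal bundle of the surgered manifold becomes trivial; this surgered manifold~$M$ is the sought stably parallelisable realisation of~$(X,[X])$.

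I expect the surgery step to be the main obstacle. All obstructions to null-homotoping~$\nu$ beyond the rational Pontryagin classes --- beginning with~$w_2 \in H^2(M_0;\Z/2)$ and continuing through the higher torsion characteristic classes detected by~$\pi_*(\BSO)$ --- are torsion, and one must verify that each can be removed by surgery on a rationally trivial sphere, organising these surgeries so as to preserve the rational homotopy type together with~$\fclq{M_0}$. The essential point making this possible is that~$X$ is merely a \emph{rational} Poincar\'e space, so that its integral tangential data is unconstrained and we are free to adjust the normal bundle of the realising manifold; turning this freedom into an explicit sequence of surgeries is where the real work lies.
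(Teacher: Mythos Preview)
Your first step --- applying Theorem~\ref{thm:bargesullivan} with~$p_* = 1$ --- is correct and matches the paper. The divergence is in how stable parallelisability is obtained, and here the paper takes a much shorter route: instead of post-processing the output of the \emph{statement} of Barge--Sullivan, it reflects on the \emph{proof}. That proof constructs~$M$ via surgery on a normal map
\[
  \xymatrix{ \nu_M \ar[r] \ar[d] & \xi \ar[d] \\ M \ar[r]^{\bar\nu} & X }
\]
where~$\xi$ is a chosen stable bundle over the rational space~$X$ with rational total Pontryagin class equal to the prescribed~$p_*$. Since we want~$p_* = 1$, one may simply take~$\xi$ to be the trivial bundle; the bundle map then exhibits~$\nu_M$ as pulled back from a trivial bundle, so the manifold delivered by Barge--Sullivan is already stably parallelisable. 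No further surgery is required.

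Your second step, by contrast, has a genuine gap --- one you yourself flag as the main obstacle, but which the paper shows is entirely avoidable. Surgery controls the stable normal bundle only through the normal-map framework: one fixes a bundle~$\xi$ over the target together with a bundle map~$\nu \to \xi$, and surgeries compatible with this datum keep~$\nu$ pulled back from~$\xi$. Your~$M_0$, however, carries no such structure; the nonexistence of a bundle map from~$\nu_{M_0}$ to a trivial bundle is precisely the obstruction you wish to remove, so there is no mechanism by which ``surgery on rationally trivial spheres'' would systematically kill~$w_2$ and the higher torsion invariants. Concretely, nothing rules out~$H_2(M_0;\Z)$ being torsion-free while~$w_2(M_0) \neq 0$: already for~$4k = 4$ the manifold~$\C P^2 \csum \overline{\C P^2}$ has~$p_1 = 0$, torsion-free~$H_2$, and~$w_2 \neq 0$, yet its rationalisation is realised by the stably parallelisable~$S^2 \times S^2$. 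In such a situation there are no rationally trivial $2$-spheres to surger and your scheme stalls at the very first obstruction. Making your approach rigorous would essentially mean re-deriving the normal-map content of the Barge--Sullivan construction, which is exactly what the paper's one-sentence observation sidesteps.
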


\begin{proof}
The lemma follows from a little reflection upon the proof of the Barge-Sullivan theorem (Theorem~\ref{thm:bargesullivan}).  We need to find a stable bundle~$\xi$ over
 the rational space~$X$ such that the total Pontryagin class of~$\xi$
 is trivial; hence we may choose $\xi$ to be the trivial bundle.
 Since the manifold~$M$ produced by the Barge-Sullivan theorem
  has a normal map
  \[
  \xymatrix{
    \nu_M \ar[r] \ar[d] & \xi \ar[d] \\
    M \ar[r]^{\bar \nu} 
    & X }
  \]
  where $\nu_M$ is the stable normal bundle of~$M$, it follows that
  $M$ is stably parallelisable.
\end{proof}

\begin{cor} \label{cor:parallelexamples} For each of the example dgas
  $A_1, A_2, A_3$ and $A_4$ of Section \ref{subsec:dgadesign} and for
  each $a \in \Q \setminus \{ 0 \}$, the class $\realmfd{A_j,
    a\cdot  [A_j]}$ contains a stably parallelisable manifold.
  \end{cor}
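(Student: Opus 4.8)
The plan is to combine the realisability results of Section~\ref{sec:(in)flexiblemanifolds} with the refined Barge--Sullivan Lemma~\ref{lem:parallelrealise}. The first thing I would record is that all four formal dimensions~$64$,~$108$,~$208$,~$228$ are divisible by~$4$, so that Lemma~\ref{lem:parallelrealise}, which requires formal dimension~$4k$, is indeed applicable to the associated $\Q$-Poincar\'e spaces.

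Fix $j \in \{1,\dots,4\}$ and $a \in \Q \setminus \{0\}$. By Proposition~\ref{prop:realisabilitypoincare} there is a simply connected rational $\Q$-Poincar\'e space~$(X_j,[X_j])$ whose minimal model is~$(A_j,[A_j])$ and whose fundamental class corresponds to~$[A_j]$. Scaling the fundamental class, I would pass to the $\Q$-Poincar\'e space~$(X_j, a \cdot [X_j])$, whose minimal model is then~$(A_j, a \cdot [A_j])$. Its intersection form on middle cohomology is~$a \cdot \lambda_{[X_j]}$, and since Proposition~\ref{prop:Wittindex} gives~$\tau_{[X_j]} = 0 \in W_0(\Q)$, Remark~\ref{rem:scaling} shows that the scaled Witt index also vanishes, that is $\tau_{a \cdot [X_j]} = 0$.

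Applying Lemma~\ref{lem:parallelrealise} to~$(X_j, a \cdot [X_j])$ then produces a stably parallelisable oriented closed simply connected smooth manifold~$M$ realising this rational data. Because $M$ is stably parallelisable, its total Pontryagin class is trivial; and because its minimal model is~$(A_j, a \cdot [A_j])$, I would conclude that $M \in \realmfd{A_j, a \cdot [A_j]}$, as desired. In this argument I do not expect any genuine obstacle: both Barge--Sullivan conditions have already been absorbed into Lemma~\ref{lem:parallelrealise} (the choice of trivial bundle forces the signature equation to read $0 = 0$, using the vanishing of the signature recorded in Proposition~\ref{prop:Wittindex}), so the only real input beyond the earlier results is the stability of Witt-triviality under rescaling supplied by Remark~\ref{rem:scaling}.
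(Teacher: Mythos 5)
Your proof is correct and takes essentially the same approach as the paper: vanishing of the Witt index for the scaled fundamental classes (Proposition~\ref{prop:Wittindex} combined with Remark~\ref{rem:scaling}), followed by an application of Lemma~\ref{lem:parallelrealise}. The additional details you supply --- the divisibility of the formal dimensions by~$4$, the identification of the scaled intersection form as $a \cdot \lambda_{[X_j]}$, and the observation that stable parallelisability forces the trivial total Pontryagin class required for membership in $\realmfd{A_j, a \cdot [A_j]}$ --- are precisely the points the paper's two-line proof leaves implicit.
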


\begin{proof}
  By Proposition~\ref{prop:Wittindex} the $\Q$-Poincar\'{e} spaces
  $(X_j, a \cdot [X_j])$ realising the Poi\-ncar\'{e} dgas $(A_j,  a\cdot [A_j])$ all
  have vanishing Witt index and so we may apply Lemma~\ref{lem:parallelrealise}.
\end{proof}

In light of Theorem~\ref{thm:inflexibleproducts} we introduce some notation: 
for~$j \in \{1,\dots, 4\}$ we write~$d_j$ for the formal
dimension of~$A_j$; more explicitly, $d_1 = 64$, $d_2=108$, $d_3 =
208$, $d_4=228$. Moreover, we abbreviate 
\begin{align*} 
  D &:= \{ d_1 \} \cup \{ d_j \cdot k \mid k \in \N_{>0}, j \in \{2,3,4\} \} \\
    & = \{64\} \cup \{ d \cdot k \mid k \in \N_{>0},\ d \in \{108,208,228\}\}.
\end{align*}

In dimensions in~$D$ we will now show that simply connected inflexible
manifolds are ``generic'' from the point of view of rational bordism,
thereby giving a first answer in the direction of
Question~\ref{q:genericinflexible}.

\begin{prop}[Inflexible manifolds and rational bordism]      
  \label{prop:inflexibleandrationalbordism}
  Let $n \in D$. Then there is a positive integer~$r(n)$, depending
  upon~$n$, such that for any oriented closed $n$-manifold~$N$ the
  $r(n)$-fold disjoint union~$\sqcup_{r(n)} N$, equivalently the
  $r$-fold connected sum~$\csum_{r(n)}N$, is oriented bordant to an
  oriented closed simply connected inflexible manifold.
\end{prop}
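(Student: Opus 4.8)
The plan is to realise the bordism class $r(n)\cdot[N]\in\Omega^{SO}_n$ as an iterated connected sum of a fixed inflexible ``core'' with summands that are invisible to the core, and then to quote Theorem~\ref{thm:inflexiblesums}. Fix $n\in D$ and let $A$ be the associated inflexible Poincaré dga: $A=A_1$ for $n=64$ and $A=A_j^{\tensor k}$ for $n=d_j\cdot k$, $j\in\{2,3,4\}$. Its cohomological fundamental class is a product $\bigotimes_a[x_a]^{m}$ of powers of a single even generator $x$, of low degree $c:=|x|\in\{4,6,8,12\}$ (Proposition~\ref{prop:dgafcl}). By Corollary~\ref{cor:parallelexamples} and Theorem~\ref{thm:inflexibleproducts} I may choose a \emph{stably parallelisable} inflexible manifold $M_0$ of this rational type; all its characteristic numbers vanish, so $[M_0]=0$ in $\Omega^{SO}_n$, and $\pi_{n-1}(M_0)\tensor\Q=0$ by Lemma~\ref{lem:homotopygroups} together with the product formula for rational homotopy. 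Thus $M_0$ is an admissible core for Theorem~\ref{thm:inflexiblesums}, and it suffices to write $r(n)\cdot[N]=\sum_i\epsilon_i[Q_i]$ with each $Q_i$ a simply connected $n$-manifold satisfying $\deg\bigl((Q_i)_\Q,(M_0)_\Q\bigr)$ finite: the connected sum $P:=M_0\csum(\csum_i Q_i)$ is then simply connected, inflexible, and oriented bordant to $\sqcup_{r(n)}N\sim\csum_{r(n)}N$.

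The engine is a cohomological criterion. A degree non-zero map $g\colon Q\to M_0$ must send the fundamental cohomology class $\bigotimes_a[x_a]^{m}$ to a generator of $H^n(Q;\Q)$, and since $g^{*}$ is a ring map this forces some power $(g^{*}x)^{n/c}\neq0$ with $g^{*}x\in H^{c}(Q;\Q)$. Hence $\deg\bigl(Q_\Q,(M_0)_\Q\bigr)=\{0\}$ as soon as every class $\xi\in H^{c}(Q;\Q)$ satisfies $\xi^{\,n/c}=0$; moreover any inflexible manifold of the \emph{same} rational type as $M_0$ has $\deg(\args_\Q,(M_0)_\Q)=\deg(X,X)$ finite. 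These furnish the admissible summands.

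To build the presentation I would first surger a representative. Because $c\le 12\ll n/2$, the relevant normal-bundle obstruction to surgery in degree $c$ lies in the stable group $\pi_{c-1}(\SO)$, which is $\Z$ exactly when $c\equiv0\pmod 4$ (detecting the Pontryagin class $p_{c/4}$) and $0$ for $c=6$. Accordingly one reduces $H^{c}(\args;\Q)$ within a bordism class to the line spanned by $p_{c/4}$ (to zero when $c=6$). The only residual obstruction to $\xi^{\,n/c}=0$ is then the single top Pontryagin number $\langle p_{c/4}^{\,n/c},[\args]\rangle$, a bordism invariant. Using the Barge--Sullivan Theorem~\ref{thm:bargesullivan} (applicable since $\sign X=0$ and the Witt index is trivial, cf.\ Remark~\ref{rem:scaling}), I produce an inflexible manifold $M_1$ of the rational type of $M_0$ with $\gamma:=\langle p_{c/4}^{\,n/c},[M_1]\rangle\neq0$, and take $r(n)$ to be a positive multiple of $\gamma$ and of the exponent of the torsion of $\Omega^{SO}_n$. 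Then $s:=(r(n)/\gamma)\langle p_{c/4}^{\,n/c},[N]\rangle\in\Z$, the corrected class $r(n)[N]-s[M_1]$ has vanishing top Pontryagin number, and I represent it by a simply connected $N''$ surgered so that $H^{c}(N'';\Q)$ is carried by $p_{c/4}$; since $p_{c/4}^{\,n/c}=0$ in $H^n(N'';\Q)$, every degree-$c$ class has trivial top power and $\deg(N''_\Q,(M_0)_\Q)=\{0\}$. Setting $P:=M_0\csum\bigl(\csum^{|s|}(\pm M_1)\bigr)\csum N''$ gives $[P]=r(n)[N]$ with all summands admissible, so Theorem~\ref{thm:inflexiblesums} makes $P$ inflexible. (For $c=6$ no Pontryagin class exists in degree $c$, the correction is vacuous, and $r(n)=1$ already works.)

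The main obstacle will be this surgery step combined with the Barge--Sullivan computation: showing that one can reduce the degree-$c$ rational cohomology of a bordism representative to the single line detected by $p_{c/4}$ — a surgery-below-the-middle-dimension argument whose only essential obstruction is the class in $\pi_{c-1}(\SO)=\Z$ — and exhibiting, by an explicit admissible choice of Pontryagin class $p_\ast$ on $X$ with $\langle L(p_\ast),[X]\rangle=0$, a realisation $M_1$ with $\gamma\neq0$. The remaining ingredients — representability of bordism classes by simply connected manifolds for $n\ge5$, null-bordance of a stably parallelisable $M_0$, and multiplicativity of the degree — are routine.
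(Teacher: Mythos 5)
Your skeleton matches the paper's: a stably parallelisable (hence null\nobreakdash-bordant) inflexible core $M_0$ realising $A_1$ or $A_j^{\tensor k}$, connected sums with summands representing the bordism class, and an appeal to Theorem~\ref{thm:inflexiblesums}. Your admissibility criterion is also correct --- although for $k>1$ the step ``this forces some power $(g^{*}x)^{n/c}\neq0$'' needs an extra argument: if all $(n/c)$-th powers of degree-$c$ classes vanished, then expanding $\bigl(\sum_a t_a\, g^{*}x_a\bigr)^{n/c}$ as a polynomial in the $t_a$ would force every product $\prod_a (g^{*}x_a)^{m}$ to vanish as well. The genuine gap is the surgery-plus-correction step, and it is fatal precisely in the dimensions $208k$ and $228k$. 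You claim that after surgery the only residual obstruction to $\xi^{n/c}=0$ is the single number $\langle p_{c/4}^{\,n/c},[\args]\rangle$. But $H^{c}(Q;\Q)$ also contains products of \emph{lower} Pontryagin classes --- $p_1^2$ when $c=8$, and $p_1^3,\ p_1p_2$ when $c=12$ --- and no surgery can remove these: they are characteristic classes, and their top powers are bordism invariants. Concretely, for $n=208$ take $N=\C P^{104}$, so that $\langle p_1^{52},[N]\rangle=105^{52}$. Every manifold rationally equivalent to the core has $H^4\cong H^4(A_3^{\tensor k})=0$, hence rationally trivial $p_1$ and vanishing $p_1$-numbers; so \emph{any} representative $Q$ of $r(n)[N]-s[M_1]$ satisfies $\langle p_1(Q)^{52},[Q]\rangle=r(n)\cdot105^{52}\neq0$, i.e.\ $\xi:=p_1(Q)^2\in H^8(Q;\Q)$ has $\xi^{26}\neq0$. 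Your criterion is therefore unachievable for such classes, and adding further correction summands of the core's rational type cannot help, since none of them carries a non-zero $p_1$-number. (Your argument does go through for $n=64$, where $H^4(A_1)\neq0$ lets the Barge--Sullivan theorem produce the needed correction, and for $n=108k$, where degree~$6$ carries no Pontryagin class.)

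The paper sidesteps all of this by a different decomposition: since products of complex projective spaces form a $\Q$-basis of $\Omega^{\SO}_*\tensor\Q$ and the torsion of $\Omega^{\SO}_*$ has exponent~$2$, one chooses $r(n)$ so that $r(n)\cdot[N]$ is an integral combination of products of projective spaces, and then proves $\deg\bigl(\prod_i\C P^{n_i},M^{\times k}\bigr)=\{0\}$ for exactly these summands, by comparing degree-\emph{two} classes with non-vanishing top power (such a class exists on every product of projective spaces but not on $M^{\times k}$). Note that products of projective spaces badly violate your degree-$c$ criterion (take $\xi=(h_1+\dots+h_m)^{c/2}$, whose $(n/c)$-th power is non-zero), so your criterion cannot substitute for that step; conversely, repairing your approach would require a degree-vanishing statement for summands with non-trivial Pontryagin classes, which is exactly what your method cannot produce.
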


\begin{proof}
  Because the products of complex projective spaces form a $\Q$-basis
  of the rational bordism ring~$\Omega_*^{\SO} \otimes
  \Q$~\cite[Corollary 18.9]{milnorstasheff} and because the torsion
  in~$\Omega_*^{\SO}$ has exponent~$2$~\cite[Corollary
    1]{wall} it suffices to consider the case where $N$ is a product
  of complex projective spaces, say $N = \prod_{i=1}^m \C P^{n_i}$
  with~$2 \cdot (n_1 + \dots + n_m) = n$.

  By definition of~$D$, we can write $n = d_j \cdot k$, with $j \in
  \{2,3,4\}$ and $k \in \N_{>0}$, or~$j=1 = k$. Moreover,
  let $M \in \realmfd{A_j, [A_j]}$; by Lemma~\ref{lem:parallelrealise} we may assume that $M$ is
  stably parallelisable. Then $M^{\times k}$ is an oriented closed
  simply connected $n$-manifold that is inflexible (by
  Theorem~\ref{thm:inflexibleproducts}) and stably parallelisable. In
  particular, $M^{\times k}$ is oriented null-bordant.

  We now consider $N' := M^{\times k} \csum N$. By construction, $N'$
  is oriented bordant to~$N$ and simply connected. It hence suffices
  to show that $N'$ is inflexible: By Lemma~\ref{lem:homotopygroups},
  we have~$\pi_{n-1}(M^{\times k}) \otimes \Q\cong
  \pi_{n-1}(M)^{\times k} \otimes \Q= 0$. By definition,
  $H^2(M^{\times k};\Q) = 0$ if~$j > 1$; in the case~$n = d_1 =64$,
  there is no class~$x \in H^2(M;\Q)$ with~$x^{32} \neq 0$ (by
  definition, $H^2(M;\Q) \cong \Q \cdot x_1$, and $[x_1]^{32} =0$, as
  shown in the proof of Proposition~\ref{prop:dgaelliptic}).  However,
  there is a class~$x \in H^2(N;\Q) = H^2(\prod_{i=1}^m \C P^{n_i})$
  such that $x^{n/2}$ generates~$H^n(N;\Q)$. Therefore, $\deg(N,M^{\times k}) =
  \{0\}$, and now applying Theorem~\ref{thm:inflexiblesums} shows that
  $N' = M^{\times k} \csum N$ is inflexible.
\end{proof}

We saw above that there are many examples of stably parallelisable simply
connected inflexible manifolds.  On the other hand it is also possible
to find simply connected inflexible manifolds with other tangential
constraints.  For example we have:

\begin{prop}[Non-spinable inflexible manifolds] \label{prop:nonspin}
  For all~$n \in D$ there are oriented closed simply connected
  non-spinable inflexible manifolds of dimension~$n$.
\end{prop}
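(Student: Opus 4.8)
The plan is to run the construction from the proof of Proposition~\ref{prop:inflexibleandrationalbordism}, but to replace the arbitrary product of complex projective spaces used there by a \emph{non-spinable} one; the key numerical input is that every dimension in $D$ is divisible by~$4$.

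First I would fix $n \in D$ and, exactly as in Proposition~\ref{prop:inflexibleandrationalbordism}, write $n = d_j \cdot k$ with $j \in \{2,3,4\}$ and $k \in \N_{>0}$, or $(j,k)=(1,1)$ when $n=64$. By Corollary~\ref{cor:parallelexamples} I may choose a stably parallelisable $M \in \realmfd{A_j,[A_j]}$ and form $M^{\times k}$. As recorded in the proof of Proposition~\ref{prop:inflexibleandrationalbordism}, the manifold $M^{\times k}$ is oriented, closed, simply connected, has inflexible minimal model (Proposition~\ref{prop:dgaprodinflexible}, respectively Proposition~\ref{prop:dgainflexible} when $j=1$), and satisfies $\pi_{n-1}(M^{\times k}) \otimes \Q = 0$ (Lemma~\ref{lem:homotopygroups}); thus $M^{\times k}$ fulfils the hypotheses placed on the inflexible summand in Theorem~\ref{thm:inflexiblesums}.

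The decisive choice is the extra summand. Since each of $64$, $108$, $208$, $228$ is divisible by~$4$, every $n \in D$ satisfies $4 \mid n$, so $n/2$ is even; therefore $N := \C P^{n/2}$ is an oriented closed simply connected $n$-manifold with $w_2(N) = (n/2+1)\cdot h \equiv h \neq 0$ in $H^2(N;\Z/2)$, where $h$ generates $H^2$. In particular $N$ is non-spinable. As $N$ is a single-factor product of complex projective spaces carrying a degree-two class $h$ with $h^{n/2}$ generating $H^n(N;\Q)$, the degree-two class argument in the proof of Proposition~\ref{prop:inflexibleandrationalbordism} applies verbatim on rational cohomology and yields $\deg(N_\Q,(M^{\times k})_\Q)=\{0\}$, which is finite.

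I would then set $N' := M^{\times k} \csum N$. Applying Theorem~\ref{thm:inflexiblesums} with the single summand $N_1 = N$ shows that $N'$ is an oriented closed simply connected inflexible $n$-manifold. To see that $N'$ is non-spinable, note that removing an open disc from the $N$-summand yields an inclusion into $N'$ that restricts the stable tangent bundle of $N'$ to that of $N$ and induces an isomorphism on $H^2(\args;\Z/2)$ (since $2 < n$); hence $w_2(N')$ restricts to $w_2(N) \neq 0$, so $N'$ is non-spinable, as required. The main obstacle is to make the manifold non-spinable without reintroducing self-maps of large degree, and this is exactly what $4 \mid n$ secures: it allows the single non-spinable block $\C P^{n/2}$ to be used, while the degree-two class argument of Proposition~\ref{prop:inflexibleandrationalbordism} keeps $\deg(N_\Q,(M^{\times k})_\Q)$ finite so that Theorem~\ref{thm:inflexiblesums} preserves inflexibility.
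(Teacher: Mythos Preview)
Your proof is correct and follows the same overall strategy as the paper: take $M^{\times k}$ as the inflexible base, connect-sum with a simply connected non-spinable $N$, verify that $\deg(N_\Q,(M^{\times k})_\Q)$ is finite, and invoke Theorem~\ref{thm:inflexiblesums}. The only genuine difference is the choice of the non-spinable summand. The paper takes $N = S^{n-2}\,\tilde\times\,S^2$, the sphere bundle of the non-trivial rank~$(n-1)$ vector bundle over~$S^2$; this $N$ is non-spinable for \emph{every}~$n$, and its rational cohomology is concentrated in degrees $0,2,n-2,n$, so the required vanishing $\deg(N,M^{\times k})=\{0\}$ follows immediately from a Betti-number mismatch in one of the degrees~$4,6,8,12$. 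Your choice $N=\C P^{n/2}$ instead exploits the pleasant numerical accident that every element of~$D$ is divisible by~$4$, so that $w_2(\C P^{n/2})\neq 0$, and then recycles verbatim the degree-two class argument already set up in Proposition~\ref{prop:inflexibleandrationalbordism}. Thus the paper's route is slightly more robust and self-contained at this point (no divisibility hypothesis, a one-line degree argument), while yours has the virtue of literally reusing the machinery of Proposition~\ref{prop:inflexibleandrationalbordism} rather than introducing a new~$N$; either way the proof goes through.
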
  

\begin{proof}
  Let $N = S^{n-2} \tilde \times S^2$ be the total space of the sphere
  bundle of the non-trivial rank~$(n-1)$-vector bundle over~$S^2$.
  Then the second Stiefel-Whitney class of~$N$ generates~$H^2(N; \Z/2)
  = \Z/2$ and $N$ is non-spinable. 

  We write $n = d_j \cdot k$ with $j \in \{2,3,4\}$ and $k\in
  \N_{>0}$, or~$j = 1 = k$. Then for all~$M \in \realmfd{A_j,[A_j]}$
  the manifold~$M^{\times k}$ is inflexible (by
  Theorem~\ref{thm:inflexibleproducts}) and simply connected. So
  $M^{\times k} \csum N$ is non-spinable (because the Stiefel-Whitney
  class is non-trivial) and simply connected. We show now that
  $M^{\times k}\csum N$ is inflexible:

  As first step, we show that $\deg(N, M^{\times k}) = \{0\}$: A
  straightforward spectral sequence calculation shows that $H^d(N;\Q)
  = 0$ for all~$d \in \{4,6,8,12\}$. On the other hand, by
  construction of the Poincar\'e dgas~$A_1, \dots, A_4$ we have
  $H^d(M^{\times k};\Q) \neq 0$ for some~$d \in
  \{4,6,8,12\}$. Therefore, $\deg(N, M^{\times k}) = \{0\}$.

  Furthermore, from Lemma~\ref{lem:homotopygroups} we obtain
  $\pi_{n-1}(M^{\times k}) \otimes \Q =0$. Hence, $M^{\times k} \csum
  N$ is inflexible by Theorem~\ref{thm:inflexiblesums}.
\end{proof}


\enlargethispage{1cm}

\bigskip

\noindent
\begin{tabularx}{\linewidth}{XX}
\begin{minipage}{6cm}
\noindent
\emph{Diarmuid Crowley}\\
  {\small
  \begin{tabular}{@{\qquad}l}
    Institute of Mathematics\\
    University of Aberdeen\\
    Aberdeen AB24 3UE\\
    United Kingdom\\
    \textsf{dcrowley@abdn.ac.uk}\\
    \textsf{http://www.dcrowley.net}\\
  \end{tabular}}\end{minipage}
&
\begin{minipage}{6cm}
\noindent
\emph{Clara L\"oh}\\
  {\small
  \begin{tabular}{@{\qquad}l}
    Fakult\"at f\"ur Mathematik\\
    Universit\"at Regensburg\\
    93040 Regensburg\\
    Germany\\
    \textsf{clara.loeh@mathematik.uni-regensburg.de}\\
    \textsf{http://www.mathematik.uni-regensburg.de/loeh}
  \end{tabular}}\end{minipage}
\end{tabularx}

\begin{thebibliography}{99}

\bibitem{amann}
         M.~Amann.
         Mapping degrees of self-maps of simply-connected manifolds, preprint, 
         \textsf{arXiv:1109.0960v1 [math.AT]}, 2011.

\bibitem{arkowitzlupton}
          M.~Arkowitz, G.~ Lupton.
          Rational obstruction theory and rational homotopy sets,
          \emph{Math.~Z.}, 235(3), pp.~525--539, 2000. 
          
\bibitem{barge} J.~Barge.  Structures diff\'{e}rentiables sur
          les types d'homotopie rationnelle simplement connexes,
          \emph{Ann.\ Sci.\ \'Ecole Norm.\ Sup.} (4) 9 no.~4, pp.~469--501,
          1976.

 \bibitem{bargeghys}
          J.~Barge, E.~Ghys.
          Surfaces et cohomologie born\'ee, \emph{Invent.\ math.}, 92,
          pp.~509--526, 1988.

 \bibitem{benedettipetronio} 
          R.~Benedetti, C.~Petronio.
          \emph{Lectures on Hyperbolic Geometry}. Universitext,
          Springer, 1992.

\bibitem{costoyaviruel}
         C.~Costoya, A.~Viruel.
         Every finite group is the group of self homotopy equivalences 
         of an elliptic space, preprint, 
         \textsf{arXiv:1106.1087v2 [math.AT]}, 2011.

\bibitem{felixhalperinthomas}
          Y.~F\'elix, S.~Halperin, J.-C.~Thomas.
          \emph{Rational Homotopy Theory},
          volume~205 of \emph{Graduate Texts in Mathematics},
          Springer, 2001.

\bibitem{vbc} M.~Gromov. Volume and bounded cohomology. 
          \emph{Publ.\ Math.\ IHES}, 56, pp.~5--99, 1982.

\bibitem{gaifullinurc} A.~Gaifullin. Universal realisators for homology 
          classes. \emph{Geom.\ Topol.}, 17(3), pp.~1745--1772, 2013 

\bibitem{gaifullin} A.~Gaifullin. Combinatorial realisation of cycles
          and small covers. \textsf{arXiv:1204.0208 [math.AT]}, 2013.

\bibitem{gromov} M.~Gromov. \emph{Metric Structures for
          Riemannian and Non-Riemannian Spaces} with appendices by
          M.~Katz, P.~Pansu and S.~Semmes, translated from the French
          by Sean Michael Bates. Volume~152 of  
          \emph{Progress in Mathematics}, Birkh\"auser, 1999.

 \bibitem{ivanov} N.V.~Ivanov. Foundations of the theory of
          bounded cohomology. \emph{J.~Soviet Math.}, 37,
          pp.~1090--1114, 1987.

\bibitem{kotschickloeh} 
          D.~Kotschick, C.~L\"oh.
          Fundamental classes not representable by products, 
          \emph{J.~London Math.\ Soc.}, 79(3), pp. 545--561, 2009.

\bibitem{kreck}
	M.~Kreck. 
	Surgery and Duality,
	\emph{Ann.\ of Math.}, 149(3), pp.~707--754, 1999.

\bibitem{loehmeasure}
          C.~L\"oh.
          Measure homology and singular homology are isometrically isomorphic, 
          \emph{Math.~Z.}, 253(1), pp.~197--218, 2006.


\bibitem{loehsauer}
          C.~L\"oh, R.~Sauer.
          Degree theorems and Lipschitz simplicial volume for
          non-positively curved manifolds of finite volume, 
          \emph{Journal of Topology}, 2, pp.~193--225, 2009.
          
\bibitem{milnorhusemoller}
         J.~Milnor, D.~Husemoller. \emph{Symmetric bilinear forms}.
         Volume~73 of \emph{Ergebnisse der Mathematik und ihrer
           Grenzgebiete}, Springer, 1973.
           
\bibitem{milnorstasheff}
	J.~ Milnor, J.~Stasheff. \emph{Characteristic classes}.
	  Ann. of Math. Studies, No. 76. Princeton University Press, University of Tokyo Press, 1974.

\bibitem{neisendorfermiller}
          J.~Neisendorfer, T.~Miller.
          Formal and coformal spaces, 
          \emph{Ill.~J.~of Math.}, 22(4), pp.~565--580, 1978.

\bibitem{sambusetti}
          A.~Sambusetti.
          An obstruction to the existence of Einstein metrics
          on 4-manifolds, \emph{Math.~Ann.}, 311, pp.~533--547, 1998.

\bibitem{shiga}
          H.~Shiga. Rational homotopy type and self maps, 
          \emph{J.~Math.\ Soc.\ Japan}, 31, pp.~427--434, 1979.

\bibitem{sullivan}
	 D.~Sullivan. Infinitesimal computations in topology,
         \emph{Inst.\ Hautes \'Etudes Sci.\ Publ.\ Math.} No.~47,
         pp.~269--331, 1977. 

\bibitem{thom}
        R.~Thom. Quelques propri\'et\'es globales des vari\'et\'es différentiables, 
        \emph{Comment.\ Math.\ Helv.}, 28, pp.~17--86, 1954.

\bibitem{thurston}
           W.P.~Thurston. 
           \emph{Geometry and Topology of 3-Manifolds}, Lecture notes,
           Princeton, 1978. Available online at
           \textsf{http://www.msri.org/publications/books/gt3m}.
           
\bibitem{wall}
	C.T.C.~Wall, Determination of the cobordism ring, 
	\emph{Ann.\ of Math.}~72, pp.~292--311, 1960.

\end{thebibliography}
\end{document}